\DeclareRobustCommand{\SkipTocEntry}[5]{}
\newtheorem{theorem}{Theorem}
\newtheorem{proposition}{Proposition}
\newtheorem{remark}{Remark}
\newtheorem{corollary}{Corollary}
\newtheorem{lemma}{Lemma}
\newcommand{\be}{\begin{equation}}
\newcommand{\ee}{\end{equation}}
\newcommand{\Z}{{\mathbb Z}}
\renewcommand{\k}{\mathbf{k}}
\newcommand{\R}{{\mathbb R}}
\newcommand{\C}{{\mathbb C}}
\newcommand{\y}{\mathbf{y}}
\newcommand{\T}{{\mathcal T}}
\newcommand{\G}{{\mathcal G}}
\newcommand{\old}[1]{}
\newcommand{\x}{\mathbf{x}}
\renewcommand{\P}{\mathbb{P}}
\newcommand{\E}{\mathbb{E}}
\renewcommand{\H}{{\mathbb H}}
\newcommand{\cov}{\mathrm{Cov}}
\newcommand{\var}{\mathrm{Var}}
\newcommand{\p}{\mathcal{P}}
\begin{document}

\title{Transfer current and pattern fields in spanning trees}

\author{Adrien Kassel}
\address{Department Mathematik, ETH, R\"{a}mistrasse 101, 8092 Z\"{u}rich, Switzerland}
\email{adrien.kassel@math.ethz.ch}

\author{Wei Wu}
\address{Division of Applied Mathematics, Brown University, 182 George Street, Providence, RI 02912, USA}
\curraddr{Department of Mathematics, New York University, 251 Mercer Street, New York, NY 10012, USA}
\email{weiwu@cims.nyu.edu}

\subjclass[2010]{Primary 82B20, 60K35; Secondary 39A12, 60G15}

\keywords{transfer current theorem, spanning trees, sandpiles, patterns}

\begin{abstract}
When a simply connected domain $D\subset{\mathbb{R}}^d$ ($d\ge 2$) is
approximated in a ``good'' way by embedded connected weighted graphs, we
prove that the transfer current matrix (defined on the edges of the graph
viewed as an electrical network) converges, up to a local weight factor, to
the differential of Green's function on $D$.

This observation implies that properly rescaled correlations of the spanning
tree model and correlations of minimal subconfigurations in the abelian
sandpile model have a universal and conformally covariant limit.

We further show that, on a periodic approximation of the domain, all pattern
fields of the spanning tree model, as well as the minimal-pattern (e.g.
zero-height) fields of the sandpile, converge weakly in distribution to
Gaussian white noise.
\end{abstract}

\maketitle

\tableofcontents

\section{Introduction}

Let ${\mathcal{G}}$ be a locally finite connected weighted graph, with
weight function $c$ (a positive symmetric function over directed edges). It
represents both an electrical network with conductances $c(e)$ on each bond $%
e$ and a (time-homogeneous) random walk $X$ on the vertex set of ${\mathcal{G%
}}$ with transition probability $\P (X_{n+1}=y\vert X_{n}=x)=c(xy)/\deg_c(x)$%
, where $\deg_c(x)=\sum_{y\sim x}c(xy)$ is the weighted degree of $x$. If $%
e=xy$ is a directed edge and a battery imposes a unit current to flow into $x
$ and out of $y$, the value of the current observed through any bond~$f$ is
the transfer current $T(e,f)$ between $e$ and~$f$.

It is well-known that the transfer current $T(e,f)$ between two directed
edges~$e$ and~$f$ is equal to the algebraic number of visits of the random
walk started at $x$ and stopped when it first hits $y$, see e.g.~\cite%
{DS,BP,BLPS}. This relates the transfer current to Green's function $G$ for
the random walk (defined in Section~\ref{dha}), and we have 
\begin{equation*}
T(ab,uv)=c(uv)(G(a,u)-G(b,u)-G(a,v)+G(b,v))\,.
\end{equation*}
Since Green's function is symmetric, the matrix $K(e,e^{\prime })=\sqrt{%
c(e)/c(e^{\prime })}T(e,e^{\prime })$\label{symkern} is symmetric in both
arguments. This symmetry is called the reciprocity law in electrical theory.
Further relations between electrical quantities and random walk hitting
times and commute times are well-known, see e.g.~\cite{DS,Lo} and references
therein.

One can give an expression for $T$ involving an eigenbasis of the Laplacian~$%
\Delta$. Let $(f_k)_{k\ge 1}$ be an orthonormal basis of eigenvectors for $%
\Delta$ associated to eigenvalues $\lambda_k\neq 0$. Then, for any two edges 
$ab,uv\in E$, we have 
\begin{equation*}
T(ab,uv)=c(uv)\sum_{k\ge 1}\frac{(f_{k}(u)-f_{k}(v))(f_{k}(a)-f_{k}(b))}{%
\lambda_{k}} \,.  \label{TDelta}
\end{equation*}
In the case of infinite periodic graphs, the transfer current may be
evaluated explicitly by this means.

Because of its relationship to random walk, the transfer current appears in
the study of many probabilistic models on graphs. The random spanning tree
model~${\mathcal{T}}$ is the probability measure on spanning trees of ${%
\mathcal{G}}$ which assigns a probability proportional to the products of
the edge-weights of the tree (see Figure~\ref{UST} for a sample). 
\begin{figure}[ht]
\centering
\includegraphics[width=10cm,height=9cm]{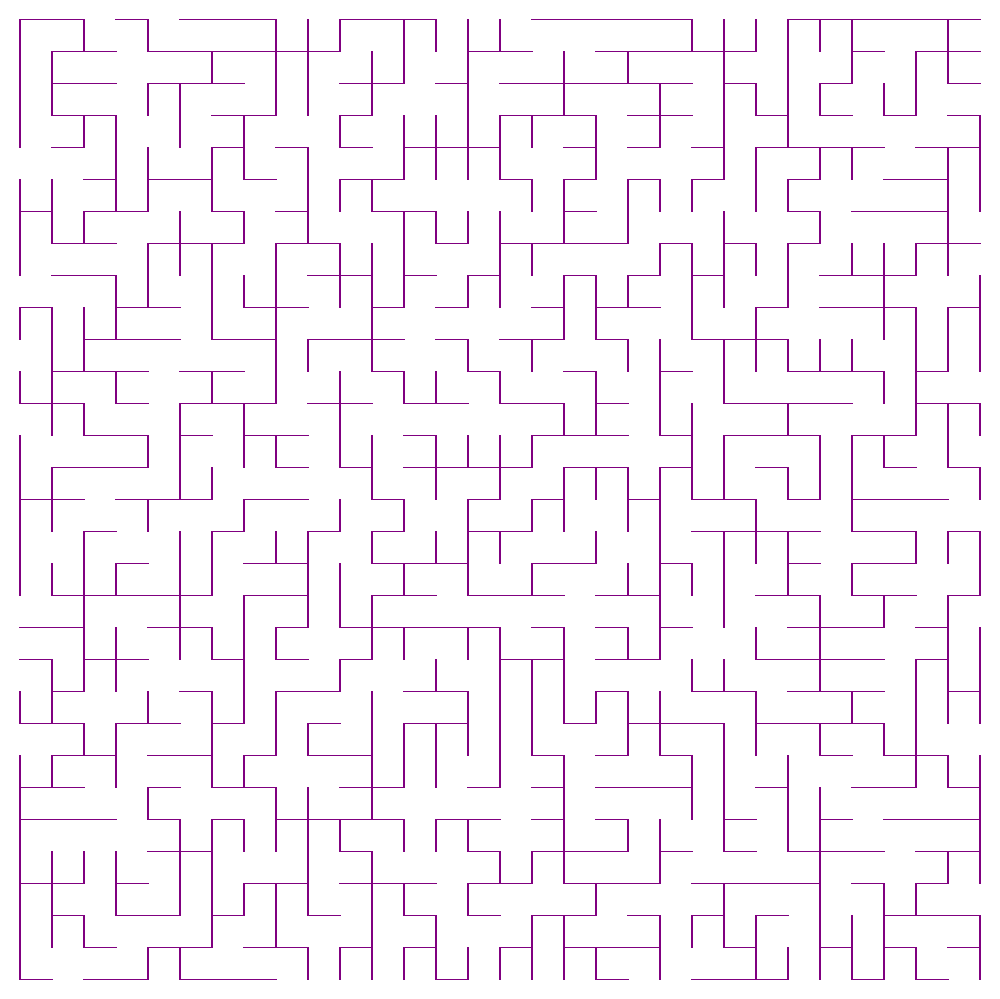}
\caption{A (free boundary) uniform spanning tree on a $30\times 30$ square
grid}
\label{UST}
\end{figure}

The transfer current theorem of Burton and Pemantle~\cite{BP} states that
the spanning tree model, viewed as a point process on the set of undirected
edges of the graph, is determinantal with kernel $T$. This means that all
local statistics have a closed-form expression in terms of the transfer
current of the local edges: for any finite collection of disjoint undirected
edges $e_1,\ldots,e_k$ in the graph, we have 
\begin{equation*}
\P \left(e_1,\ldots,e_k\in \mathcal{T}\right)=\det
\left(T\left(e_i,e_j\right)\right)_{1\leq i,j\leq k}\,.
\end{equation*}
Any local statistics of the spanning tree is thus a local computation
provided we know the value of the transfer current (which depends on the
graph in its whole). This is the case for periodic infinite graphs as
already pointed out above. Note that the matrix $K$ defined above is also a
kernel of this process (since it is the conjugate of $T$ by a diagonal
matrix): since it is symmetric, we say that the spanning tree model is a
symmetric determinantal process (this implies further properties of the
process that we review in Section~\ref{patternfieldsust}).

In many cases, although an explicit form of the transfer current is not easy
to obtain, one can show that on a large scale, or equivalently in the
scaling limit for macroscopically distant points, the transfer current is
close to its continuous counterpart. For graphs which approximate a smooth
domain in a nice way (we call these good approximations), the transfer
current between two macroscopically distant edges tends (up to a local
constant depending on the edge weights) to the derivative of Green's
function along the directions given by the edges. This is our first main
result (see Theorem~\ref{ticv}). The proof is an adaptation of the classical
arguments from a special case of~\cite{CS}. The goal is to show the
universality of the limit on a large class of graphs in any dimension. Our
assumptions on the graph are quite strong (thus the proof is simpler), but
these are satisfied by many interesting examples.

Three other models are closely related to the spanning tree model: the
two-component spanning forest, the spanning unicycle, and the abelian
sandpile model of Dhar~\cite{D}. There exist natural couplings between them
which enable to express the probability of any event for these models as a
spanning tree computation (see~\cite{KKW} for the first two models and see~%
\cite{MD,JW} for the abelian sandpile; also see~\cite{LP}). However, local
statistics for these models are not necessarily local statistics for the
spanning tree and may require the knowledge of the geometry of the spanning
tree as a whole. We focus in this paper only on the local statistics that
are local statistics for the spanning tree as well. These type of statistics
can be computed explicitly and have a well-defined universal limit, expressed in terms of the scaling limit of transfer currents. This is our second main
result (Theorems~\ref{corrUST} and~\ref{corrASM}; another application is
Theorem~\ref{dgff}).

A possible realization of the random process in a finite subregion is called
a pattern. Given a collection of countable patterns on disjoint supports, we
can define a new point process, or equivalently a random spin field which is
the indicator of the occurrence of these patterns. Under sufficiently fast
decay of correlations with the distance, pattern fields of determinantal
point processes converge to Gaussian white noise (Theorem~\ref{cltdet}). We
use this to study the fields generated by the local events mentioned above
in the spanning tree and sandpile models. More precisely, we express local
events fields as pattern fields in the spanning tree model and use the
determinantal structure of the spanning tree process to show that these
fields are Gaussian white noise. This is our third main result (Theorems~\ref%
{patterncv},~\ref{USTpattern}, and~\ref{ASMpattern}). For the special case
of the zero-height field of the sandpile on~${\mathbb{Z}}^{2}$, the result was
already obtained by D\"{u}rre. The study of pattern fields of determinantal
point processes originated in the work of Boutillier~\cite{Bo}, who studied
them in fluid dimer models on the plane. Gaussian fluctuations for symmetric
determinantal point fields were first observed by Soshnikov~\cite{So} (this
corresponds to patterns consisting of a single point present).

Computing probabilities of local events which are not local for the spanning
tree is much harder. In the planar case, it has been addressed by Wilson~%
\cite{Wilson} using methods of Kenyon and Wilson~\cite{KenyonWilson}.

Describing the scaling limits of spanning trees and related models is
challenging. In two dimensions, using conformal invariance, the scaling
limit is well understood in terms of Schramm--Loewner evolutions~\cite{LSW}.
In dimension three, Kozma~\cite{Kozma} has shown the existence of the
scaling limit of the loop-erased random walk (that is, the branches of the
spanning tree by a theorem of Pemantle~\cite{Pemantle}) and in high dimension, the scaling
limit is also known due to the lace expansion~\cite{Slade}. Furthermore, the
geometry of the infinite volume limit has been well-studied~\cite{BLPS,BKPS}%
. However, many natural questions about the geometry of spanning trees on
infinite graphs and their scaling limits remain open. The pattern fields we
study encode certain information of spanning trees in any dimension, and
their scaling limit might tell us something about the scaling limit of
spanning trees. Our method is based on the determinantal nature of spanning
trees and the study of its kernel, the transfer current. Pattern fields are
local quantities, however we give in Section~\ref{treescrst}, for future
use, some properties which relate the transfer current to global geometric
properties of trees.

The paper is organized as follows. In Section~\ref{dha}, we introduce some
concepts from discrete harmonic analysis: weighted graphs, vector field,
derivative, Laplacian, harmonic functions, Green's functions, and transfer
current. In Section~\ref{sl} we define and exhibit good approximations for
domains in ${\mathbb{R}}^{d}$. We prove Theorem~\ref{ticv} which states that
on good approximations, the transfer current convergences (up to a local
factor) to the derivative (in both variables along the direction of the
edges) of Green's function. In Section~\ref{i} we review some couplings
between spanning trees and the other models and relate the probability of
events that correspond to local events for the spanning tree, expressed in
terms of the transfer current. We derive from this two types of results: on
the one hand, the connection between combinatorics of trees and discrete
analytic properties of $T$; on the other hand, we use Theorem~\ref{ticv} to show
that certain correlations of the trees and sandpiles have a universal and
conformally covariant limit. Finally, we concentrate in Section~\ref{dpp} on
pattern fields of certain determinantal point processes and show as an
application of what precedes (for this, we only need the order of decay of $%
T(x,y)$ in the limit $|x-y|\to\infty$, not its precise value) that the
pattern fields of a random spanning tree and the minimal-pattern (e.g.
zero-height) fields of the abelian sandpile converge to Gaussian white noise
in the scaling limit, under good approximation of a domain.

\section{Discrete harmonic analysis}

\label{dha}

Let ${\mathcal{G}}=\left(V,E,c,\partial V\right)$ be a finite connected
weighted graph with vertex-set $V$, edge-set~$E$, weight function $c:E\to{%
\mathbb{R}}_+^*$, and a boundary-vertex-set $\partial V$ which consists of a
(possibly empty) subset of the vertices. We let $V^{\circ}=V\setminus%
\partial V$ and call its elements the interior vertices.

Let $\Omega ^{0}$ denote the space of functions over the interior vertices
and $\Omega ^{1}$ be the space of \mbox{$1$-forms}, that is antisymmetric
functions over the set~$E^{\pm }$ of directed edges (two directed edges per
element of $E$). We endow $\Omega ^{0}$ with its canonical scalar product
coming from its identification with ${\mathbb{R}}^{V^{\circ }}$, that is $%
\left\langle f,g\right\rangle =\sum_{v\in V^{\circ }}f(v)g(v)$, and endow $%
\Omega ^{1}$ with the scalar product given by $\left\langle \alpha ,\beta
\right\rangle =1/2\sum_{e\in E^{\pm }}c(e)\alpha (e)\beta (e)$.

In the case where ${\mathcal{G}}$ is infinite, we now abuse the notations $%
\Omega^0$ and $\Omega^1$ and suppose that these actually denote the
subspaces of elements with finite norm (the $\ell^2$ spaces).

We define the map $d:\Omega^0\to\Omega^1$ by $df(vv^{\prime })=f(v^{\prime
})-f(v)$ and the map $d^*:\Omega^1\to\Omega^0$ by $d^*\alpha(v)=\sum_{v^{%
\prime }\sim v}c(v^{\prime }v)\alpha(v^{\prime }v)$. The maps $d$ and $d^*$
are dual to one another, in the sense that $\left\langle f,
d^*\alpha\right\rangle=\left\langle df,\alpha\right\rangle$ for any $%
f\in\Omega^0$ and $\alpha\in\Omega^1$.

The Laplacian is defined by $\Delta=d^*d:\Omega^0\to\Omega^0$, and it is
easy to check that for any $f\in\Omega^0$ and $v\in V^{\circ}$, we have 
\begin{equation*}
\Delta f(v)=\sum_{v^{\prime }\sim v}c(vv^{\prime })\left(f(v)-f(v^{\prime
})\right)\,,
\end{equation*}
where the sum is over neighboring vertices (including boundary vertices). A
function $f$ is said to be harmonic at a vertex $v$ whenever $\Delta f(v)=0$.

The Green function $G$ is the inverse of the Laplacian. Two cases may occur:
if the boundary is non empty ($\partial V\ne \emptyset$), $\Delta$ is
invertible on $\Omega^0$, hence $G=\Delta^{-1}$. In the case when the
boundary is empty ($\partial V=\emptyset$), $G$ is the inverse of $\Delta$
on the space of mean zero functions. The first one is called the Dirichlet
Green function (or wired Green function), the second is called the Neumann
Green function (or free Green function)~\footnote{%
Our choice of the qualifier ``Neumann'' for this Green function comes from
the fact that we can replace the free boundary conditions by Neumann
boundary conditions by artificially adding a boundary to the graph. The
Laplacian acts on the extended space of functions which take same values at
any boundary vertex and its adjacent interior vertex. This corresponds to a
vanishing normal derivative on the boundary. This modification of the
Laplacian does not change the local times of the random walk at interior
points which is sufficient for our needs. In the limit, this ``Neumann''
Green function actually converges to the elementary solution for the Neumann
problem.}. Given a weighted graph~${\mathcal{G}}$ we will sometimes consider
simultaneously both Green's functions by looking at the wired or free
boundary conditions.

When the space of harmonic forms, $H=\mathrm{Im}(d)\cap \mathrm{Ker}(d^*)$
is trivial (which is always the case on a finite connected graph: harmonic
functions are constant), the space of~$1$-forms has an orthogonal
decomposition (Hodge) as 
\begin{equation*}
\Omega^1=\mathrm{Im}(d)\oplus \mathrm{Ker}(d^*)\,,
\end{equation*}
Under this assumption ($H={0}$), when the graph is infinite, Theorem 7.3. of 
\cite{BLPS} implies that free and wired spanning forests measures coincide.

The orthogonal projection of $\Omega^1$ onto $\mathrm{Im}(d)$ is $P=dGd^*$.
Any directed edge $xy$ defines a $1$-form $\delta_{xy}-\delta_{yx}$ and we
often identify the directed edge with this $1$-form. The \emph{transfer
current} is the matrix of $P$ in the basis indexed by~$E^{\pm}$. For two
directed edges $xy$ and $uv$, it therefore satisfies 
\begin{align}\label{ti}
\begin{split}
T(xy,uv)&=\sqrt{c(xy)}^{-1}\left\langle xy/\sqrt{c(xy)},P uv/\sqrt{c(uv)}%
\right\rangle \sqrt{c(uv)}=c(xy)^{-1}\left\langle xy,P uv\right\rangle \\
&=c(uv)\left(G(x,u)-G(y,u)-G(x,v)+G(y,v)\right)\,.
\end{split}
\end{align}
According to the boundary condition chosen (free or wired), we obtain two
different transfer currents.

A vector field on ${\mathcal{G}}$ is the choice for each vertex $v\in
V^{\circ}$ of an outgoing edge $X_v$. For any vertex $v$ and directed edge $%
vv^{\prime }$, we define $v+(vv^{\prime })$ to be $v^{\prime }$. In
particular, $v+X_v$ is the neighbor of $v$ to which the vector $X_v$ points
at. We define the derivative of a function~$f$ with respect to the vector
field $X$ to be the function $v\mapsto \nabla_X f(v)=f(v+X_v)-f(v)$. The
derivative of a function $f$ with respect to a vector field $X$ satisfies $%
\nabla_X f=df(X)$, an equality which is coherent with the one from calculus.

The $1$-forms are dual to vector fields and naturally act on them. Another
way to see the action of $P$ is 
\begin{equation*}
\langle\alpha,P\beta \rangle=\langle d^* \alpha, G d^*\beta \rangle\,.
\end{equation*}
Under this formulation, the projection can be written as acting on the
divergence of vector fields with kernel given by Green's function. The
continuum analog of this projection is widely used and is known as the
Hodge-Helmholtz projection.

\section{Scaling limits}

\label{sl}

\subsection{Good approximations of domains}

Let $d\ge 2$ and $D\subset {\mathbb{R}}^d$ be a simply connected open set,
with smooth boundary $\partial D$. We call such a set $D$ a domain. We
consider a sequence of connected weighted graphs with boundary-set ${%
\mathcal{G}}_n=(V_n,E_n,c_n,\partial V_n)$ embedded in $D$. By ``embedded'',
we mean that the vertices are points inside $D$ and boundary vertices lie in 
$\partial D$. The edges are mapped to smooth non-intersecting segments in
such a way that edges between boundary vertices lie inside $\partial D$. We
let $\Delta_n$ be the Laplacian on ${\mathcal{G}}_n$.

We say that a sequence of functions $f_{n}:V_{n}^{\circ }\rightarrow {%
\mathbb{R}}$, converges uniformly on a compact subset of $K\subset D$ to a
function $f$, if the sequence $(g_{n})_{n\geq 1}$ defined by $%
g_{n}=\sum_{x\in V_{n}^{\circ }}f_{n}(x)\delta _{x}$ converges uniformly to~$%
f$ on $K$.

We say that the sequence $({\mathcal{G}}_{n})_{n\geq 1}$ is a \emph{good
approximation} of $D$ if the following properties hold.

\begin{enumerate}
\item (\emph{Approximate mean value property}) For any bounded harmonic
function $f$ on $V_n$, and any ball $B(v,r)$, we have $f(v)-1/\vert
B(v,r)\vert\sum_{w\in B(v,r)}f(w)=O(1/r)$.

\item (\emph{Paths approximation}) For any straight line $\gamma $ in $D$
joining $x$ to $y$, there exists finite paths $\gamma_n$ inside ${\mathcal{G}%
}_{n}$ joining two vertices $x_{n}$ to $y_{n}$ such that $x_{n}\rightarrow x$%
, $y_{n}\rightarrow y$, and $\gamma _{n}$ uniformly tends to a path $\gamma $%
. Moreover, the discrete length of $\gamma _{n}$ is bounded by an absolute
constant times the length of $\gamma $.
\end{enumerate}

Examples of good approximations include: the lattices ${\mathbb{Z}}^{d}$ for 
${\mathbb{R}}^d$ and ${\mathcal{G}}_n={\mathbb{Z}}^d/n\cap D$ for some
domain $D$ (the Approximate mean value property follows from Lemma~6 in~\cite%
{JLS}; the Paths approximation property is clear), and for $d=2$, isoradial
graphs with bounded half-angles, see~\cite{Ken02} (see Appendix~\ref{iso}
for a quick review of the definition of isoradial graphs and a justification
of why there are good approximations).

The Paths approximation property is rather easy to check on a given graph.
It is less so for the Approximate mean value property: see Remark~\ref%
{suffcond} below for a practical way to check it.

\subsection{Convergence of the derivative}

In the following, $|X(v)|$ denotes the Euclidean length of the edge viewed
as an embedded segment. For any vertex $v$, we denote $B(v,r)$ the discrete
ball of center $v$ and radius $r$ in~${\mathcal{G}}$, and by $|B(v,r)|$ its
cardinality.

We say that a compact subset $K$ of $D$ is interior if its distance to the
complement of~$D$ is strictly positive.

\begin{proposition}
\label{cv} Let $({\mathcal{G}}_n)_{n\ge 1}$ be a good approximation of a
domain $D$. Consider a vector field $X_n$. Suppose a sequence of harmonic functions on ${\mathcal{G}}%
_n$ converges uniformly on interior compact subsets of $D$ to a harmonic
function $f$. Then the sequence of their rescaled discrete derivatives $%
|X_n|^{-1}\nabla_{X_n}f$ is uniformly close in the limit $n\to\infty$, on all interior compact subsets of $D$, to
the derivative $\nabla_{X_n}f$ of $f$.
\end{proposition}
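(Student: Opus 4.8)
The plan is to upgrade the given uniform ($C^0$) convergence $f_n\to f$ to convergence of the rescaled discrete derivatives, exploiting that discrete harmonic functions enjoy a priori gradient estimates coming from the approximate mean value property. Write $v'=v+X_n(v)$ and $\hat X_n(v)=X_n(v)/|X_n(v)|$, and read the target quantity $\nabla_{X_n}f$ on the right-hand side as the continuum directional derivative $\nabla f(v)\cdot\hat X_n(v)$; on an interior compact $K$ the mesh $\sup_{v\in K}|X_n(v)|\to0$. The tool I would use is a discrete gradient (Cauchy-type) estimate: for a discrete harmonic function $u$ on a ball $B(v,R)\subset D$ of Euclidean radius $\rho=R/n$, one has $|X_n(v)|^{-1}|\nabla_{X_n}u(v)|\le C\rho^{-1}\sup_{B(v,R)}|u|$, with $C$ depending only on the good-approximation data. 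This estimate I would derive from the approximate mean value property together with the discrete maximum principle (equivalently, a discrete Harnack inequality), so that uniform control of $u$ on a macroscopic ball forces first-order control at its center.

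To use it, fix a small macroscopic $\rho$ with $B:=B(v,\rho)\subset D$ for all $v\in K$, and let $F_n$ be the discrete harmonic function on $B$ with boundary values $f$ on $\partial B$. Then $f_n-F_n$ is discrete harmonic with boundary data $f_n-f\to0$, so by the discrete maximum principle $\sup_{B}|f_n-F_n|=\sup_{\partial B}|f_n-f|\to0$, and the gradient estimate gives
\[
|X_n(v)|^{-1}\bigl|\nabla_{X_n}(f_n-F_n)(v)\bigr|\le C\rho^{-1}\sup_{\partial B}|f_n-f|\longrightarrow 0
\]
uniformly in $v\in K$. This reduces the problem to identifying the limit of the rescaled derivative of the genuinely harmonic family $F_n$.

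For the identification, represent $F_n$ by the exit distribution of the random walk, $F_n(w)=\sum_{\xi\in\partial B}\omega^B_w(\xi)f(\xi)$, so that $|X_n(v)|^{-1}\nabla_{X_n}F_n(v)=|X_n(v)|^{-1}\sum_{\xi}(\omega^B_{v'}(\xi)-\omega^B_v(\xi))f(\xi)$. On good approximations the convergence of the discrete Dirichlet problem (itself built on the approximate mean value property), together with the gradient estimate applied to this one-parameter family, shows that the base-point difference quotient $|X_n(v)|^{-1}(\omega^B_{v'}-\omega^B_v)$ converges to the base-point directional derivative, along $\hat X_n(v)$, of the continuum Poisson kernel of $B(v,\rho)$. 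Hence the sum converges to the continuum directional derivative of the Poisson extension of $f|_{\partial B}$; since $f$ is harmonic, that extension is $f$ itself, so the limit is exactly $\nabla f(v)\cdot\hat X_n(v)$, uniformly in $v\in K$. Combining the two displays proves the proposition.

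The main obstacle is the rescaling by the microscopic edge length $|X_n(v)|\sim1/n$. A naive use of the approximate mean value property, writing $u(v')-u(v)$ as a difference of averages over $B(v',R)$ and $B(v,R)$, leaves an error $O(1/R)$ which, divided by $|X_n(v)|$, becomes $O(n/R)$ and blows up at the admissible mesoscopic scales $R\ll n$. Overcoming this requires a genuine first-order estimate --- equivalently, the Lipschitz dependence of discrete harmonic measure on its starting point, or a discrete $C^{1}$ regularity theory built on a discrete Harnack inequality --- rather than a crude difference of zeroth-order averages. This is the step where the hypotheses defining a good approximation are genuinely used and where I would follow the adaptation of the arguments of~\cite{CS}.
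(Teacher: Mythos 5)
Your reduction steps are sound, but the proof has a genuine gap at the point where you identify the limit. First, a smaller issue of diagnosis: you claim the ``naive'' difference of averages over $B(v,R)$ and $B(v',R)$ fails because the error $O(n/R)$ blows up for $R\ll n$, and that one therefore needs a genuine first-order (Harnack-type) estimate. But at \emph{macroscopic} radius $R=nr$ the error is $O(n/(nr))=O(1/r)$, i.e.\ bounded, and a uniform \emph{bound} (not smallness) on $|X_n|^{-1}\nabla_{X_n}f_n$ is all that is needed to run a compactness argument; this elementary difference-of-averages computation is exactly the a priori estimate the paper uses, and it is also the only content of the ``gradient estimate'' you posit. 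So the first half of your argument is essentially correct and parallels the paper, modulo the unnecessary appeal to a discrete Harnack inequality, which is not among the hypotheses and is not needed.

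The real problem is your final step. You assert that the base-point difference quotient of discrete harmonic measure, $|X_n(v)|^{-1}(\omega^B_{v'}-\omega^B_v)$, converges to the directional derivative of the continuum Poisson kernel. As a function of its base point, discrete harmonic measure \emph{is} a family of discrete harmonic functions, so this claim is precisely an instance of the proposition you are trying to prove: convergence of rescaled discrete first derivatives of discrete harmonic functions to continuum first derivatives. Nothing in your argument establishes it independently (the ``convergence of the discrete Dirichlet problem'' you invoke is a zeroth-order statement and is itself not among the hypotheses), so the argument is circular at its crux. The paper closes this step by an entirely different and more elementary device: after extracting a uniformly convergent subsequence of $|X_n|^{-1}\nabla_{X_n}f_n$ with limit $h$, it sums the discrete derivative along discrete poly-lines approximating a straight segment $\overline{uv}$ --- this is where the \emph{Paths approximation} property, which your proof never uses, enters --- and passes to the limit to get $f(v)-f(u)=\int_{\overline{uv}}h$, whence $h=\nabla_X f$ by letting $v\to u$. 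To repair your proof you would either need to supply an independent derivative-level convergence result for discrete harmonic measure, or replace the Poisson-kernel identification by such a path-integration argument.
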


\begin{proof}
Let $K$ be a compact subset of $D$. Let $K'$ be an interior compact subset such that $K\subset K'\subset D$ and $K$ is at a positive distance $r$ from the exterior of $K'$. The function $f$ is bounded on $K'$, hence the sequence $(f_n)_{n\ge 1}$ as well. Let $z\in K$ and $v$ be an approximation of $z$. By the approximate mean value property at $v$ and $v+X_v$, there exists a constant $C$ such that, for any $n\ge 1$, we have
\[
\left\vert\nabla_{X_n}f_n(v)-\frac{1}{\vert B(v,nr)\vert}\sum_{w\in V_n^{\circ}}f_n(w)\left(1_{\{w\in B(v+1,nr)\}}-1_{\{w\in B(v,nr)\}}\right)\right\vert \le \frac{C}{nr}\,.
\]
Now, 
\begin{align*}
\sum_{w\in V_n^{\circ}}f_n(w)\left(1_{\{w\in B(v+1,nr)\}}-1_{\{w\in B(v,nr)\}}\right)
&\le C \left\vert B\left(v,(n+1)r\right)\setminus B(v,nr)\right\vert\\
&\le C\frac{\vert\partial B\left(v,(n+1)r\right)\vert}{\vert B(nr)\vert}=O(C/nr)\,,
\end{align*}
where we used, in the last equality, the bound of $O(1/R)$ on the surface-area-to-volume ratio of a ball or radius $R$ in $\R^d$.
The sequence $|X_n|^{-1}\nabla_{X_n}f_n(v)$ is therefore uniformly bounded on $K$.

Therefore, one can extract a subsequence $\left\{ n_{k}\right\} $, so that $%
|X_{n_{k}}|^{-1}\nabla _{X_{n_{k}}}f_{n_{k}}$ converges uniformly on compact subsets to
some bounded function $h:D\rightarrow \mathbb{R}$. We now prove that for any
subsequential limit $h=\nabla _{X_{n_k}}f$, which finishes the proof.

By the \emph{Paths approximation} property (and because $K$ is locally convex), for any $u,v\in K$ close enough, and any $n$, we
can take a discrete poly-line segment $\overline{u_{n}v_{n}}$, such that $%
u_{n}\rightarrow u$, $v_{n}\rightarrow v$, and $\overline{u_{n}v_{n}}$
converges (in the uniform topology) to the line $\overline{uv}$. We replace the vector field along this line so that it is aligned with it. Along a
subsequence $\left\{ n_{k}\right\} $, we have 
\begin{equation*}
f_{n_{k}}\left( v_{n_{k}}\right) -f_{n_{k}}\left( u_{n_{k}}\right) =\sum_{s\in \overline{u_{n_{k}}v_{n_{k}}}\cap V_{n}}df_{n_{k}}\left(
{X_{n_{k}}}\right) =\int_{\overline{u_{n_{k}}v_{n_{k}}}}|X_{n_k}|^{-1}\nabla
_{X_{n_{k}}}f_{n_{k}}\left( x\right) dx\,.
\end{equation*}%
Taking $n_{k}\rightarrow \infty $, uniform convergence implies
\begin{equation*}
f\left( v\right) -f\left( u\right) =\int_{\overline{uv}}h\left( x\right) dx\,.
\end{equation*}%
By taking $\overline{uv}=X_{u}$ at every $u$, and letting $v\rightarrow u$, we have $h=\nabla _{X}f$.
\end{proof}

\begin{remark}
Since the derivative of a harmonic function is a harmonic function, Proposition~\ref{cv} may be applied successively to show that any higher order
derivative also converges under the same assumptions.
\end{remark}

\subsection{Green's function and transfer current}

In this paper, we consider $\Delta=-\sum_{i=1}^d{\partial^2}/{\partial x_i^2}
$ to be the positive definite Laplacian. Recall that the Green function with
Neumann (resp. Dirichlet) boundary conditions is the (unique up to constant)
smooth symmetric kernel over $D$ solution to 
\begin{equation*}
\Delta_y u(x,y)=\delta_{x}
\end{equation*}
with boundary condition $\partial u(x,y)/\partial n(y)=0$ for $y\in\partial D
$, where $n(y)$ is the normal vector at the boundary point $y$ (resp. $%
u(x,y)=0$ for $y\in\partial D$). On a bounded domain the Neumann Green
function is defined up to an additive constant, and we identify it with its
action on functions of mean zero.

We now make some further assumptions on our good approximations. First, we
suppose that they form an exhausting sequence of some infinite embedded
graph ${\mathcal{G}}_\infty$. We assume that Green's function on ${\mathcal{G%
}}_\infty$, denoted by~$G_0$, converges uniformly, upon rescaling, on
compact sets (away from the diagonal) to the continuous Green's function $g$
on ${\mathbb{R}}^{d}$ with control on the error term, and that the
invariance principle holds:

\begin{enumerate}
\item[A1] \label{assumption4}(\emph{Green's function asymptotics}) $%
G(z,w)=n^{2-d}g(z,w)+O\left(n^{-d}|z-w|^{-d}\right) $.

\item[A2] (\emph{Invariance principle}) The random walk on $({\mathcal{G}}%
_{n})_{n\geq 1}$ converges in the supremum norm to Brownian motion in $D$.
\end{enumerate}

These assumptions are satisfied by the following good approximations: the
lattices ${\mathbb{Z}}^d$, see~\cite{FU,LL}, and for $d=2$, isoradial
graphs, see~\cite{Ken02,CS} and Appendix~\ref{iso}.

\begin{theorem}
\label{ticv} Suppose the conditions of Proposition~\ref{cv} hold. Then under
assumptions (A1) and (A2), we have 
\begin{equation}
T(e_{n},e_{n}')=c(e'_n)n^{-d}dg_{D}\vert_{(z,w)}(e,e')+o\left( n^{-d}|z-w|^{-d}\right) \,, \label{main}
\end{equation}
where $g_D$ is Green's function with the corresponding boundary conditions on $D$, and $dg_D$ its differential.
\end{theorem}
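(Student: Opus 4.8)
The plan is to combine the exact formula \eqref{ti} for the transfer current with the Green's function asymptotics (A1) and the derivative-convergence statement of Proposition~\ref{cv}. Recall from \eqref{ti} that
\[
T(e_n,e_n')=c(e_n')\left(G(x,u)-G(y,u)-G(x,v)+G(y,v)\right),
\]
where $e_n=xy$ and $e_n'=uv$. First I would fix interior points $z,w\in D$ with $z\neq w$, together with discrete approximating edges $e_n$ near $z$ and $e_n'$ near $w$ whose embedded lengths scale like $n^{-1}$. The key observation is that the bracketed expression is precisely a \emph{double difference} of $G$: a discrete derivative in the first argument along $e_n$ applied to a discrete derivative in the second argument along $e_n'$. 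Thus $T(e_n,e_n')=c(e_n')\,\nabla_{e_n}^{(x)}\nabla_{e_n'}^{(u)}G$, where the superscripts indicate which variable the discrete derivative acts on.

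Next I would insert the asymptotic expansion (A1), writing $G(z,w)=n^{2-d}g(z,w)+O(n^{-d}|z-w|^{-d})$, and apply the double discrete derivative to both sides. For the main term, the function $u\mapsto G(\cdot,u)$ is harmonic away from the diagonal and converges (upon the rescaling $n^{2-d}$) uniformly on compact sets to $g(\cdot,w)$, which is smooth and harmonic off-diagonal; so Proposition~\ref{cv}, applied in the second variable and then in the first, guarantees that the rescaled discrete derivatives $|e_n'|^{-1}\nabla_{e_n'}$ and $|e_n|^{-1}\nabla_{e_n}$ converge to the corresponding continuum directional derivatives $\nabla_{e'}$ and $\nabla_{e}$ of $g$. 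Since the edge lengths $|e_n|,|e_n'|$ scale like $n^{-1}$, the two factors of $n^{-1}$ from the edge lengths combine with the prefactor $n^{2-d}$ to produce the announced order $n^{-d}$, and the double directional derivative of $g$ at $(z,w)$ along $(e,e')$ is exactly the differential $dg_D|_{(z,w)}(e,e')$. The Remark following Proposition~\ref{cv} is what licenses iterating the convergence statement to second (mixed) order: since the first derivative of a harmonic function is again harmonic, the proposition applies a second time.

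The main obstacle will be controlling the error term under the two discrete differentiations. Applying a discrete derivative in each variable to the $O(n^{-d}|z-w|^{-d})$ remainder in (A1) does not automatically preserve the same order — naively, each discrete difference could divide by an edge length $\sim n^{-1}$ and inflate the error. The resolution is that the error term in (A1), being itself a difference of Green's functions, inherits its own harmonicity-type regularity off the diagonal, so its discrete derivatives are themselves $O(n^{-d}|z-w|^{-d})$ in the appropriately rescaled sense; the approximate mean value property in the definition of good approximation, used exactly as in the proof of Proposition~\ref{cv} to bound $\nabla_{X_n}f_n$ by the surface-to-volume ratio $O(1/(nr))$ of a ball, is what makes this rigorous. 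After dividing through by the edge-length normalization, the error stays $o(n^{-d}|z-w|^{-d})$ uniformly on interior compact sets bounded away from the diagonal, which yields \eqref{main}. I would handle the free and wired (Neumann and Dirichlet) boundary conditions in parallel, since the only difference is which continuum Green's function $g_D$ appears, the asymptotic input (A1)--(A2) being assumed in both cases.
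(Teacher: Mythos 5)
There is a genuine gap: your argument never accounts for the boundary conditions, and as written it would prove the formula with the whole-space Green's function $g$ in place of $g_D$. Assumption (A1) is an asymptotic for $G_0$, the Green's function of the \emph{infinite} graph ${\mathcal G}_\infty$, converging to the whole-space kernel $g$ on $\R^d$; it says nothing about the Dirichlet or Neumann Green's function $G$ of the finite graphs ${\mathcal G}_n$ embedded in $D$. When you ``insert the asymptotic expansion (A1)'' into \eqref{ti} and then assert that the resulting double directional derivative of $g$ ``is exactly the differential $dg_D|_{(z,w)}(e,e')$'', you are identifying $g$ with $g_D$; these differ by the nontrivial harmonic correction $g_D-g$, which is precisely where the boundary conditions live. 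The convergence of $n^{d-2}G$ to $g_D$ is not an assumption of the theorem and cannot be extracted from (A1) alone.

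The paper's proof supplies exactly the missing step: it decomposes $T=T_0+(\text{correction})$, where $T_0$ is the transfer current built from $G_0$ (handled by (A1) and a Taylor expansion of $g$, essentially as in your main term), and the correction involves the double increment of $F=G-G_0$. The function $F$ is discrete harmonic everywhere (no diagonal singularity), the invariance principle (A2) identifies the limit of $n^{d-2}F$ as the continuous harmonic function $f$ with boundary values $g_D-g$, the mean-value argument gives uniform boundedness and hence uniform convergence on interior compacts, and Proposition~\ref{cv} applied twice converts this into convergence of the rescaled double increments of $F$ to $\nabla_e\nabla_{e'}f$. Adding the two pieces yields $g+f=g_D$ in \eqref{main}. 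In your proposal (A2) is mentioned only in the closing sentence and is never used, which is the symptom of the missing decomposition. Your concern about differentiating the $O(n^{-d}|z-w|^{-d})$ remainder is legitimate and your proposed fix via harmonicity and the approximate mean value property is in the right spirit, but it addresses a secondary issue; the primary one is the absent boundary correction term.
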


\begin{proof}
Let $T_{0}$ denote the transfer current associated with Green's function on
the whole space, defined in~\eqref{ti}. By Assumption A1, the rescaled
Green's function $n^{d-2}G_{0}$ on the infinite graph converges uniformly on
all interior compact subsets to Green's function on the whole plane $g$.
Applying Assumption A1 to~\eqref{ti} and differentiating $g$ twice, we have 
\begin{equation}
T_{0}(e_{n},e_{n}^{\prime })=c(e_{n}^{\prime })|e_{n}||e_{n}^{\prime
}|n^{-d}\nabla _{e}^{z}\nabla _{e^{\prime }}^{w}g(z,w)+o\left(
n^{-d}|z-w|^{-d}\right) \,.  \label{t0cv}
\end{equation}%
Consider the discrete harmonic function $F=G-G_{0}$. By the invariance
principle (Assumption A2), $n^{d-2}F$ converges to the continuous harmonic function $f$ with
boundary values given by $g_{D}-g$. Applying the same (mean value) argument
as in the proof of Proposition \ref{cv}, the discrete gradient of $F$ is uniformly bounded.
Therefore $F$ converges to~$f$, uniformly on interior compact subsets of~$D$. By
applying Proposition~\ref{cv} twice, we obtain the uniform convergence of
the rescaled double increment of $F$ to the double derivative of~$f$. Since 
\[
T(e_{n},e_{n}^{\prime })=T_{0}(e_{n},e_{n}^{\prime })+c(e_{n}^{\prime
})\left( F\left( z,w\right) -F\left( z+e_{n},w\right) -F\left(
z,w+e_{n}^{^{\prime }}\right) -F\left( z+e_{n},w+e_{n}^{^{\prime }}\right)
\right) ,
\]%
combined with (\ref{t0cv}), this implies 
\[
T(e_{n},e_{n}^{\prime })=c(e_{n}^{\prime })|e_{n}||e_{n}^{\prime
}|n^{-d}\nabla _{e}^{z}\nabla _{e^{\prime }}^{w}\left( g+f\right)
(z,w)+o\left( n^{-d}|z-w|^{-d}\right) \,,
\]%
which finishes the proof. 
\end{proof}

\begin{remark}
When the domain is $D={\mathbb{R}}^d$, and we dispose of a two-terms
expansion of Green's function (as is the case for ${\mathbb{Z}}^d$, by~%
\cite{FU}), the proof of Theorem~\ref{ticv} is immediate by computation and
Taylor expansion.
\end{remark}

\begin{remark}
By using the interpretation of Green's function as the density of time spent in the neighborhood of $y$ when started at $x$ and the explicit Brownian time-space scaling, one can use (A2) to prove a weaker form of (A1), namely that $G(z,w)=n^{2-d}g(z,w)+o(n^{2-d}|z-w|^{2-d})$.
\end{remark}

\begin{remark}\label{suffcond}
If we suppose that the discrete Laplacian of homogeneous polynomials of degree~$2$ in $\R[x_1,\ldots,x_d]$ is $2\sum_{i=1}a_i$, where $a_i$ is the coefficient of $x_i^2$, then, by using (A.1), the Approximate mean value property may be shown, following the proof of Proposition~A.2 in~\cite{CS}. This assumption also implies that random walk is isotropic (and the variance of the increments is of order of the volume around each vertex) and thus implies~(A.2).
\end{remark}

As in the setup of~\cite[Theorem~13]{Ken00}, in order to control the behavior of Green's function for points on the boundary, we slightly modify the way we approximate the domain $D$ by ensuring that the approximating graphs have piecewise linear boundaries in the following sense. We consider a sequence $\delta=\delta(n)$ such that $n^{-1}\delta^{-1}=o(1)$, as $n\to\infty$, and an increasing sequence of domains $D^\delta\subset D$ such that $D^\delta$ lies within $\delta$ of $D$ and such that its boundary is a polytope (polygon, when $d=2$) with hyperfaces (segments, when $d=2$) of size $\delta^{d-1}$. $D^\delta$ is furthermore assumed to be convex for $d\ge 3$.

We can obtain convergence of the transfer current for points on the boundary whenever we consider, as a replacement of a good approximation to $D$, a diagonal subsequence of good approximations of the domains $D^\delta$. This follows from the fact that on domains with piecewise linear boundaries, we can use a reflection argument.

\begin{corollary}
Let $D\subset \mathbb{R}^{d}$, $d\geq 3$ be a convex domain with smooth
boundary, or $D\subset \mathbb{R}^{2}$ be simply connected with smooth
boundary. Assume the approximation sequence is chosen as above. Formula \eqref{main} holds for points on the boundary.
\end{corollary}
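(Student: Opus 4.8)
The plan is to reduce the boundary statement to the interior result of Theorem~\ref{ticv} through the reflection argument announced just above. Fix a boundary point $w\in\partial D$ carrying the edge $e'$, together with a macroscopically distant point $z$ carrying $e$; by the reciprocity symmetry of $T$ and of $g_D$ it suffices to treat $w\in\partial D$ with $z$ interior, the case of both points on the boundary following by reflecting at each face in turn. Passing from $D$ to the inscribed polytope $D^\delta$, I would arrange the approximating graph so that $w$ lies in the relative interior of a single flat hyperface $F$ of $\partial D^\delta$, at distance of order $\delta$ from the $(d-2)$-skeleton where hyperfaces meet. Since $n^{-1}\delta^{-1}=o(1)$, at the mesh scale $1/n$ the boundary near $w$ is indistinguishable from the supporting hyperplane of $F$, so locally we are in a half-space situation.

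First I would implement the method of images across $F$. Reflecting $\mathcal{G}_n$ in the hyperplane of $F$ yields a graph on the doubled region, and I extend the discrete Green function $G^\delta$ by odd reflection in the Dirichlet case and by even reflection in the Neumann case. The vanishing of $G^\delta$ (resp. of its normal difference) on $F$ makes the extended function discrete-harmonic across $F$, away from its pole, throughout the interior of the doubled configuration. Thus $w$, previously a boundary point, becomes an interior point of an honest good approximation; the hypotheses of Proposition~\ref{cv} and assumptions (A1) and (A2) hold for the doubled data, and the interior argument of Theorem~\ref{ticv} applies without change. In particular the rescaled double increment of $G^\delta$ converges, exactly as in that proof (after the usual splitting into $T_0$ and the harmonic correction $F^\delta$), to $\nabla_e^z\nabla_{e'}^w g_{D^\delta}(z,w)$, which is formula \eqref{main} with $g_{D^\delta}$ in place of $g_D$ and $w$ on the boundary of $D^\delta$.

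Second, I would pass to the diagonal limit. Choosing $\delta=\delta(n)\to 0$ with $n^{-1}\delta^{-1}=o(1)$ and taking a diagonal subsequence of good approximations of the $D^\delta$, the inscribed polytopes satisfy $D^\delta\uparrow D$. Because $\partial D$ is smooth and $D$ is convex for $d\ge 3$ (simply connected for $d=2$), classical domain-perturbation estimates for the Dirichlet and Neumann Green functions give $g_{D^\delta}\to g_D$ together with $dg_{D^\delta}\to dg_D$, locally uniformly up to the boundary. Combining this convergence with the $D^\delta$-version of \eqref{main} established by reflection, the transfer current converges to $c(e'_n)n^{-d}\,dg_D\vert_{(z,w)}(e,e')$ with error $o(n^{-d}|z-w|^{-d})$, which is the assertion.

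The main obstacle is uniformity along the diagonal, since the reflection identity is exact only for a perfectly flat face. Two errors must be controlled simultaneously: the deviation of $\partial D$ from the supporting hyperplane of $F$, which is of order $\delta^2$ over a face of linear size $\delta$ and hence at most $\delta$; and the breakdown of the reflection near the edges of $F$. I would absorb the latter by keeping $w$ at distance of order $\delta\gg 1/n$ from the skeleton, so that on the scale relevant to the double-increment estimate the edges are invisible, and the former by showing that $(dg_{D^\delta}-dg_D)\vert_{(z,w)}$ is of smaller order than the stated error $o(n^{-d}|z-w|^{-d})$; this is precisely where $n^{-1}\delta^{-1}=o(1)$ and the monotone, one-sided approximation furnished by convexity of $D^\delta$ enter, convexity guaranteeing in addition that the reflection of $D^\delta$ across each supporting hyperplane does not fold back into the domain (a global obstruction absent when $d=2$, where the argument can instead be localized near $w$ using the conformal, isoradial structure). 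The remaining technical point, when $d=2$, is to check that reflecting $\mathcal{G}_n$ across the line of $F$ preserves the isoradial structure, so that the doubled graph is again a good approximation satisfying (A1) and (A2).
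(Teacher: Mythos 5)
Your proposal follows essentially the same route as the paper: reflect the approximating graph of $D^\delta$ across the flat piece of the boundary containing the marked point, observe that the Green function of the doubled graph is a linear combination of Green functions of the original one (the method of images, i.e.\ your odd/even extension), so the boundary point becomes interior and Theorem~\ref{ticv} applies, with the $d=2$ case handled via the conformal/isoradial structure as in Proposition~\ref{surface}. The additional bookkeeping you supply (the diagonal limit in $\delta$, the role of convexity in preventing the reflection from folding back, and the distance from the $(d-2)$-skeleton) is consistent with, and fills in, the paper's very terse argument.
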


\begin{proof}
Reflect the approximation graph of $D^\delta$ across the flat piece of the boundary, and
glue it with the original graph. The result follows by noting that Green's function on the new graph is a linear combination of Green's
function on the original graph. When $d=2$ the argument works for any simply
connected surface (see Proposition~\ref{surface}).
\end{proof}

\subsection{Planar graphs}


On any planar embedded graph, the conjugate of a harmonic function $h$ on
the graph is a harmonic function $h^*$ on the planar dual which is defined
by the following discrete Cauchy--Riemann equations: (for any directed edge $%
e$, denote by $e^*$ the dual edge directed in such a way that $e\wedge e^*>0$%
) $dh^*(e^*)=dh(e)$ for any dual edges $e$ and~$e^*$ (where $d$ is the
discrete derivative in the dual and primal graph, respectively).

In particular, we have the following.

\begin{lemma}
\label{harmonicconjugate} Let $G$ be the Green function of a planar graph~${%
\mathcal{G}}$. Let $\tilde{G}$ be the harmonic conjugate of $G$. For any $%
e=vv'$, let $e^*=pp'$ be the dual edge to $e$. We have 
\begin{equation*}
\tilde{G}(p,q)-\tilde{G}(p',q)=G^*(p,q)-G^*(p',q)\,,
\end{equation*}
where $G^*$ is the Green function on the dual graph~${\mathcal{G}}^*$.
\end{lemma}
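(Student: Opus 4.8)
The plan is to prove the stronger statement that, for a fixed second argument $q$, the two functions $p\mapsto \tilde G(p,q)$ and $p\mapsto G^*(p,q)$ on the dual vertex set differ by an additive constant; the asserted identity is then just the statement that their discrete gradients agree along the dual edge $e^*=pp'$. Since $\G^*$ is connected, it suffices to show that $p\mapsto \tilde G(p,q)-G^*(p,q)$ has vanishing discrete gradient, and I would obtain this from the discrete Cauchy--Riemann equation together with the uniqueness of the dual Green function.

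First I would use the defining Cauchy--Riemann relation $d\tilde G(e^*)=dG(e)$ to rewrite the left-hand side: the increment of $\tilde G$ across the dual edge $e^*=pp'$ is exactly the increment (current) of $G$ across the primal edge $e=vv'$. Next I would record that $G^*(\cdot,q)$ is characterized as the inverse of the dual Laplacian, $\Delta^* G^*(\cdot,q)=\delta_q$, with dual conductances $c^*(e^*)=1/c(e)$ prescribed by planar electrical duality. Forming the difference $h:=\tilde G(\cdot,q)-G^*(\cdot,q)$, I would then compute its dual Laplacian edge by edge: the contribution coming from $\tilde G$ telescopes, because $dG$ is an exact primal $1$-form whose signed sum around each primal face vanishes, while the contribution of $G^*$ is the source $\delta_q$. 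Matching these singular contributions at $q$ shows that $h$ is discrete-harmonic on all of $\G^*$. Finally, the boundary condition carried by $G$ (Dirichlet or free) is converted by the Cauchy--Riemann equation into the complementary condition for $\tilde G$ under the Dirichlet--Neumann swap induced by planar duality, and this is precisely the boundary condition satisfied by $G^*$; hence $h$ also has homogeneous boundary data, so by uniqueness $h$ is constant and its gradient vanishes, which is the claim.

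The main obstacle is the careful bookkeeping at the pole $q$ and on the boundary. The harmonic conjugate of a function carrying a pole is a priori only locally defined, since it acquires a nontrivial monodromy around the dual face containing the singularity; the delicate point is therefore to verify that, in the two-variable set-up, the singular data of $\tilde G(\cdot,q)$ matches exactly the unit source $\delta_q$ of $G^*(\cdot,q)$ rather than producing spurious monodromy, and simultaneously that the inherited boundary condition is the correct dual one. Once the source and boundary data are shown to agree, the remainder is the standard uniqueness statement for discrete harmonic functions on a finite connected graph.
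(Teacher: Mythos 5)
Your overall strategy---show that $\tilde G(\cdot,q)$ and $G^*(\cdot,q)$ have the same dual Laplacian and the same boundary data, then invoke uniqueness to conclude they differ by a constant---is a legitimate route to the identity, and it shares its main ingredients (discrete Cauchy--Riemann, identification of harmonicity defects, uniqueness of the Green function) with the paper's argument. But as written there is a genuine gap at the central step, and you have in fact located it yourself without closing it. Your computation of $\Delta^*\tilde G(\cdot,q)$ by ``telescoping'' the exact form $dG$ around each primal face yields $0$ at \emph{every} face; the point source does not appear from that computation at all. It is hidden in the monodromy: the conjugate of a function with a pole is multivalued, and the unit defect of harmonicity only materializes once you choose a branch cut and compare the Cauchy--Riemann increment across the cut with the increment of the univalued representative. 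Asserting that ``the singular contributions match at $q$'' is therefore not a computation but essentially a restatement of the lemma (two functions on a finite connected graph with the same Laplacian and the same boundary data being equal up to a constant \emph{is} the conclusion). Your ``main obstacle'' paragraph names this issue precisely, but defers it rather than resolving it, and the proposal contains no mechanism for converting monodromy into the correct point defect.

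The paper supplies exactly that mechanism, and does so in the first variable rather than the second: it works with $f(w)=G(v,w)-G(v',w)$, which has harmonicity defects $+1$ at $v$ and $-1$ at $v'$, so that its conjugate is univalued on the dual graph once the single dual edge $e^*=pp'$ is treated as the branch cut (any dual cycle separating $v$ from $v'$ must use $e^*$). The defects of the univalued conjugate are then computed by a purely local comparison of the two harmonic extensions across the edge $vv'$, landing $+1$ at $p$ and $-1$ at $p'$, which identifies the conjugate with $G^*(p,\cdot)-G^*(p',\cdot)$. Note also that this is why the lemma is stated only for increments across a single dual edge: the individual values $\tilde G(p,q)$ are only defined modulo these monodromy choices, so your ``stronger statement'' about $p\mapsto\tilde G(p,q)$ as a globally defined function requires fixing a cut structure before it is even meaningful. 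To repair your proof you would need to add the cut-and-compare argument (or reproduce the paper's difference trick), at which point the two proofs essentially coincide.
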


Lemma~\ref{harmonicconjugate} is the discrete analog to the fact that the
`complex' Green function defined as $g_D+ig_N$ where $g_D$ is the Dirichlet
Green function and $g_N$ the Neumann Green function, is analytic (away from
the diagonal) hence satisfies the Cauchy--Riemann equations.

On the whole plane, we can compute this explicitly for comparison. Start
with $g(z,w)=-1/(2\pi) \log|z-w|$. Then by taking the harmonic conjugate (in
one of the variables) we obtain $-1/(2\pi)\mathrm{arg}(z-w)$ (it is the same
in either variable by a simple geometrical fact about parallel angles being
equal). Now taking the harmonic conjugate with respect to the other variable
we get $-1/(2\pi)\log|z-w|$ back again, up to an additive constant. By
taking differences of Green functions as in the lemma we get equality.

\begin{proof}[Proof of Lemma~\ref{harmonicconjugate}]
Let $f(w)=G(v,w)-G(v',w)$. The function $f$ is harmonic with a defect of harmonicity $+1$ at $v$ and $-1$ at $v'$. Its harmonic conjugate is the function $f^*(w)=\tilde{G}(p,q)-\tilde{G}(p',q)$. 
This is a univalued harmonic function with a defect of harmonicity $+1$ at $p$ and $-1$ at $p'$ (This may be seen by comparing the harmonic extension of $f$ avoiding edge $vv'$ and the harmonic extension crossing that edge. Since this move is local, when one extends from $p$ to $p'$, one obtains the result). Thus it is equal to $G^*(p,q)-G^*(p',q)$. 
\end{proof}

\begin{corollary}
\label{tidual} Let $e$ and $e'$ be two edges. Then 
\begin{equation*}
T(e,e')=T^{\ast }(e^{\ast },e^{\prime \ast })\,,
\end{equation*}%
where $T$ is the transfer current on ${\mathcal{G}}$, and $T^{\ast }$ the
transfer current on the dual graph~${\mathcal{G}}^{\ast }$.
\end{corollary}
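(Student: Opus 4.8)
**Proof plan for Corollary (T(e,e') = T*(e*,e'*)).**

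The plan is to express both sides in terms of Green functions and reduce the identity to Lemma~\ref{harmonicconjugate}. First I would write out the transfer current on the primal graph using the defining formula~\eqref{ti}: for $e=vv'$ and $e'=uu'$, we have $T(e,e')=c(e')\left(G(v,u)-G(v',u)-G(v,u')+G(v',u')\right)$. I would organize this double difference by grouping the first variable: set $f(w)=G(v,w)-G(v',w)$, so that $T(e,e')=c(e')\bigl(f(u)-f(u')\bigr)$. This is exactly the quantity controlled by Lemma~\ref{harmonicconjugate}, whose hypothesis introduces $f$ and its harmonic conjugate $f^*$.

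Next I would invoke Lemma~\ref{harmonicconjugate} to rewrite $f(u)-f(u')$ as a difference of values of the conjugate Green function across the dual edge. Writing $e^*=pp'$ and $e'^*=qq'$ for the dual edges, the discrete Cauchy--Riemann relation $df^*(e'^*)=df(e')$ converts the primal increment of $f$ along $e'$ into the dual increment of $f^*$ along $e'^*$. By the lemma, $f^*$ agrees (up to the relevant differences) with $G^*(p,\cdot)-G^*(p',\cdot)$ evaluated at the dual endpoints of $e'^*$, so that $f(u)-f(u')$ becomes $G^*(p,q)-G^*(p',q)-G^*(p,q')+G^*(p',q')$, which is precisely the double difference defining $T^*(e^*,e'^*)$ up to the dual conductance factor.

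The one point requiring care is the bookkeeping of conductances and orientations. On an embedded planar graph the dual conductance is $c^*(e^*)=1/c(e)$, and the sign convention $e\wedge e^*>0$ fixes the orientation of each dual edge; I would check that these combine so that the weight $c(e')$ appearing in $T(e,e')$ matches $c^*(e'^*)$ in $T^*(e^*,e'^*)$, using the reciprocity already noted after~\eqref{symkern} and the self-consistency of the discrete Cauchy--Riemann equations under the chosen orientation. This is where the main obstacle lies: the identity is literally an equality of numbers, not merely up to a constant, so one must verify that the conjugate-harmonic construction introduces no spurious multiplicative or additive factor. Since $f^*$ is univalued and harmonic away from its two defects of harmonicity (located at $p$ and $p'$ by the local argument in the proof of Lemma~\ref{harmonicconjugate}), the additive ambiguity drops out upon taking the second difference in the $q$-variable, and the corollary follows.
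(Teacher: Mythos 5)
Your argument is essentially the paper's own: the corollary is presented there as an immediate consequence of Lemma~\ref{harmonicconjugate}, obtained exactly as you describe by grouping the double difference of $G$ into $f(w)=G(v,w)-G(v',w)$, invoking the lemma, and applying the discrete Cauchy--Riemann relation once more in the second variable. The conductance and orientation bookkeeping you single out is indeed the one delicate point, and the paper does not spell it out either; it instead offers, as a cross-check, a second purely combinatorial proof via the duality between two-component spanning forests and spanning unicycles, Equations~\eqref{trees} and~\eqref{crst}.
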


A purely combinatorial proof of Corollary~\ref{tidual} follows from
Equations~\eqref{trees} and~\eqref{crst} in Section~\ref{treescrst} below by
noticing that the dual of a two-component spanning forest is a spanning
unicycle.

\section{Transfer current in statistical physics}

\label{i}

In this section, we explain ways in which the transfer current describes
models of statistical physics on weighted graphs: point correlations and
geometrical properties. Using Theorem~\ref{ticv}, we can find the scaling
limits of some of these expressions. We also relate analytical properties of 
$T$ to its combinatorial properties.

\subsection{Spanning trees}

Let ${\mathcal{G}}$ be a finite connected weighted graph. A \emph{spanning
tree} on $\mathcal{G}$ is a subgraph $(V,A)$, where $A$ is a set of edges,
which contains no cycles and is connected. We weight each spanning tree by
the product over edges in the tree of their weight and call the
corresponding probability measure the \emph{random spanning tree} on~${%
\mathcal{G}}$. 

By the transfer current theorem of Burton and Pemantle~\cite%
{BP}, the transfer current is a kernel for the random spanning tree measure,
that is, for any distinct edges $e_1,\ldots,e_k$, the probability that the random spanning tree contains them is 
\begin{equation*}
\P (e_1,\ldots,e_k)=\det \left(T(e_i,e_j)\right)_{1\le i,j\le k}\,.
\end{equation*}
This along with Theorem~\ref{ticv} implies Theorem~\ref{corrUST} below.

Given an edge $e$ in $\mathcal{G}$, we let $1_{e}$ denote the random variable
that takes value $1$ if $e$ belongs to the spanning tree, and $0$ otherwise (this
is an example of a pattern field, defined in Section~\ref{dpp} below). For $k$ distinct edges $e_1,\ldots,e_k$, the covariance of the corresponding random variables $1_{e_i}$ is $\cov\left(
e_{1},\ldots,e_{k}\right) =\mathbb{E}\left[ \left( 1_{e_{1}}-\mathbb{P}\left(
e_{1}\right) \right) \ldots\left( 1_{e_{k}}-\mathbb{P}\left( e_{k}\right)
\right) \right] $.

\begin{theorem}\label{corrUST}
Under the assumptions of Theorem~\ref{ticv}, the rescaled correlations of the spanning tree model
have a universal and conformally covariant limit.
\end{theorem}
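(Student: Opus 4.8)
The plan is to feed the transfer-current asymptotics of Theorem~\ref{ticv} into the determinantal structure of the spanning tree, reducing $\cov(e_1,\ldots,e_k)$ to a finite sum of products of transfer currents whose limit can be controlled factor by factor. First I would use that, for a determinantal process with kernel $T$, the mixed central moment admits a moment--cumulant expansion
\[
\cov(e_1,\ldots,e_k)=\sum_{P}\ \prod_{B\in P}\kappa_B,\qquad
\kappa_B=(-1)^{|B|-1}\sum_{\sigma}\ \prod_{i\in B}T(e_i,e_{\sigma(i)}),
\]
where $P$ ranges over the set partitions of $\{1,\ldots,k\}$ having no singleton block (singletons drop out because the variables $1_{e_i}-\P(e_i)$ are centered), and, for each block $B$, the inner sum runs over the $(|B|-1)!$ cyclic permutations $\sigma$ of $B$. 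This is the standard expression of the truncated correlation functions of a determinantal process as cyclic sums of its kernel; it is where the Burton--Pemantle identity $\P(e_1,\ldots,e_k\in\T)=\det(T(e_i,e_j))$ enters.

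The second step is a power count. For macroscopically separated edges the diagonal entries $T(e_i,e_i)=\P(e_i\in\T)$ are of order $1$, whereas by Theorem~\ref{ticv} every off-diagonal entry is $O(n^{-d})$. In each cyclic product $\prod_{i\in B}T(e_i,e_{\sigma(i)})$ all factors are genuinely off-diagonal, so $\kappa_B=O(n^{-d|B|})$; since the blocks partition $\{1,\ldots,k\}$, every surviving partition contributes at the common order $n^{-dk}$, whence $\cov(e_1,\ldots,e_k)=O(n^{-dk})$. To keep track of the edge weights I would pass to the symmetric kernel $K(e,e')=\sqrt{c(e)/c(e')}\,T(e,e')$: the square-root prefactors telescope around each cycle, so $\prod_{i\in B}T(e_i,e_{\sigma(i)})=\prod_{i\in B}K(e_i,e_{\sigma(i)})$, and the full expansion carries exactly one factor $c(e_i)$ per edge.

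Third, I would substitute $T(e_i,e_j)=c(e_j)\,n^{-d}\,dg_D|_{(z_i,z_j)}(e_i,e_j)+o(n^{-d}|z_i-z_j|^{-d})$ from \eqref{main} into each factor. Factoring out $n^{-dk}\prod_{i=1}^k c(e_i)$, the rescaled correlation $n^{dk}\cov(e_1,\ldots,e_k)$ converges to
\[
\Big(\prod_{i=1}^k c(e_i)\Big)\sum_{P}\ \prod_{B\in P}(-1)^{|B|-1}\sum_{\sigma}\ \prod_{i\in B} dg_D|_{(z_i,z_{\sigma(i)})}(e_i,e_{\sigma(i)}),
\]
an explicit functional of the differentials of the continuum Green's function $g_D$ at the macroscopic limit points $z_1,\ldots,z_k$ with the prescribed boundary conditions. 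Up to the local weight factor $\prod_i c(e_i)$, this limit depends only on $D$ and not on the microscopic lattice, which is the asserted universality; in two dimensions the conformal invariance of the Dirichlet Green's function, together with the transformation of its differentials under a conformal map $\phi$ (each $dg_D$ picking up the appropriate factor of $\phi'$), gives the stated conformal covariance.

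The main obstacle is the uniform control of the error terms across the finitely many partition sums. Since each off-diagonal factor has leading order $n^{-d}$ with multiplicative error $o(1)$, any product of $k$ of them has relative error $o(1)$, so the leading $n^{-dk}$ contribution dominates; making this uniform over families of configurations requires a macroscopic separation $\inf_{i\ne j}|z_i-z_j|>0$, which bounds the $o(n^{-d}|z_i-z_j|^{-d})$ errors away from the diagonal. The remaining delicate point is the conformal-covariance step, which is genuinely two-dimensional and rests on the analyticity underlying Lemma~\ref{harmonicconjugate}; in dimension $d\ge 3$ only the universality assertion survives.
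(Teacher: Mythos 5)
Your proposal is correct and follows essentially the same route as the paper: the moment--cumulant expansion over singleton-free partitions with cyclic sums per block is exactly the cycle expansion of the identity $\cov(e_1,\ldots,e_k)=\det\left(T(e_i,e_j)\delta_{i\neq j}\right)_{1\le i,j\le k}$ that the paper invokes (citing Lemma~21 of~\cite{Ken00}), after which both arguments substitute the asymptotics of Theorem~\ref{ticv} and appeal to the conformal covariance of $dg_D$. The additional power counting and the telescoping of the weights via the symmetric kernel $K$ are consistent with the paper's stated limit $\det\left(c(e_j)\,dg_D|_{(z_i,z_j)}(e_i,e_j)\,\delta_{i\neq j}\right)$.
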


\begin{proof}
Standard linear algebra calculation implies (see, e.g.
Lemma 21 of \cite{Ken00}) $$\cov\left( e_{1},...,e_{k}\right) =\det \left(
T(e_{i},e_{j})\delta _{i\neq j}\right) _{1\leq i,j\leq k}\,.$$ 
Applying Theorem %
\ref{ticv}, we obtain that, as $n\to\infty$,%
\begin{equation*}
n^{kd}\cov\left( e_{1},...,e_{k}\right) \rightarrow \det \left( c\left(
e_{j}\right) dg_{D}|_{\left( z_{i},z_{j}\right) }\left( e_{i},e_{j}\right)
\delta _{i\neq j}\right) _{1\leq i,j\leq k}.
\end{equation*}%
Therefore the limit of rescaled correlations exists, and its conformal
covariance follows from the conformal covariance of $dg_{D}$.
\end{proof}

\subsection{Two-component spanning forests and spanning unicycles}

\label{treescrst}

A \emph{two-component spanning forest} ($2$SF) on~${\mathcal{G}}$ is a
subgraph $(V,B)$, where $B$ is a set of edges, which contains no cycles and
has exactly two connected components. A \emph{spanning unicycle} (or \emph{%
cycle-rooted spanning tree}) is a connected subgraph $(V,B)$ where $|B|=|V|$
(it thus contains a unique cycle). See a picture on Figure~\ref{samples}. On
planar graphs these two notions are dual to one another. In the following we do not need to suppose however (unless otherwise stated) that the graphs are planar.

As for spanning trees, we consider the weight of a subgraph to be the
product over edges (of the subgraph) of the edge-weights. We define a probability measure on $2$SFs (respectively, random spanning
unicycles) by giving a $2$SF (respectively, a spanning unicycle) a probability proportional to its weight. This defines the random $2$SFs and random spanning unicycles considered hereupon.

A spanning unicycle can be thought of as a spanning tree to which an edge is
added. Note however, that the measure obtained from taking a random spanning
tree and adding a uniformly random edge is different than the random
spanning unicycle defined above, and the Radon-Nikodym derivative is simply
the length of the cycle. Nevertheless, we observe that the law of the cycle
of the random spanning unicycle conditional on the event that the cycle
contains some edge $e_1$ is the law of the path between the extremities of $%
e_1$ in the spanning tree model on the graph where $e_1$ is removed. The
event that the cycle is of a given shape is therefore the union of
translates of events for the spanning tree, which can be evaluated
explicitly for infinite periodic graphs. In particular for $\Z^2$, using the explicit values of the transfer current computed in \cite{BP}, we obtain that
the probability that the cycle is of length $4$ is $-\frac{16}{\pi ^3}+%
\frac{8}{\pi ^2}\approx .294$.

\begin{figure}[ht]
\centering
\begin{tabular}{ccc}
\includegraphics[width=7cm,height=7cm]{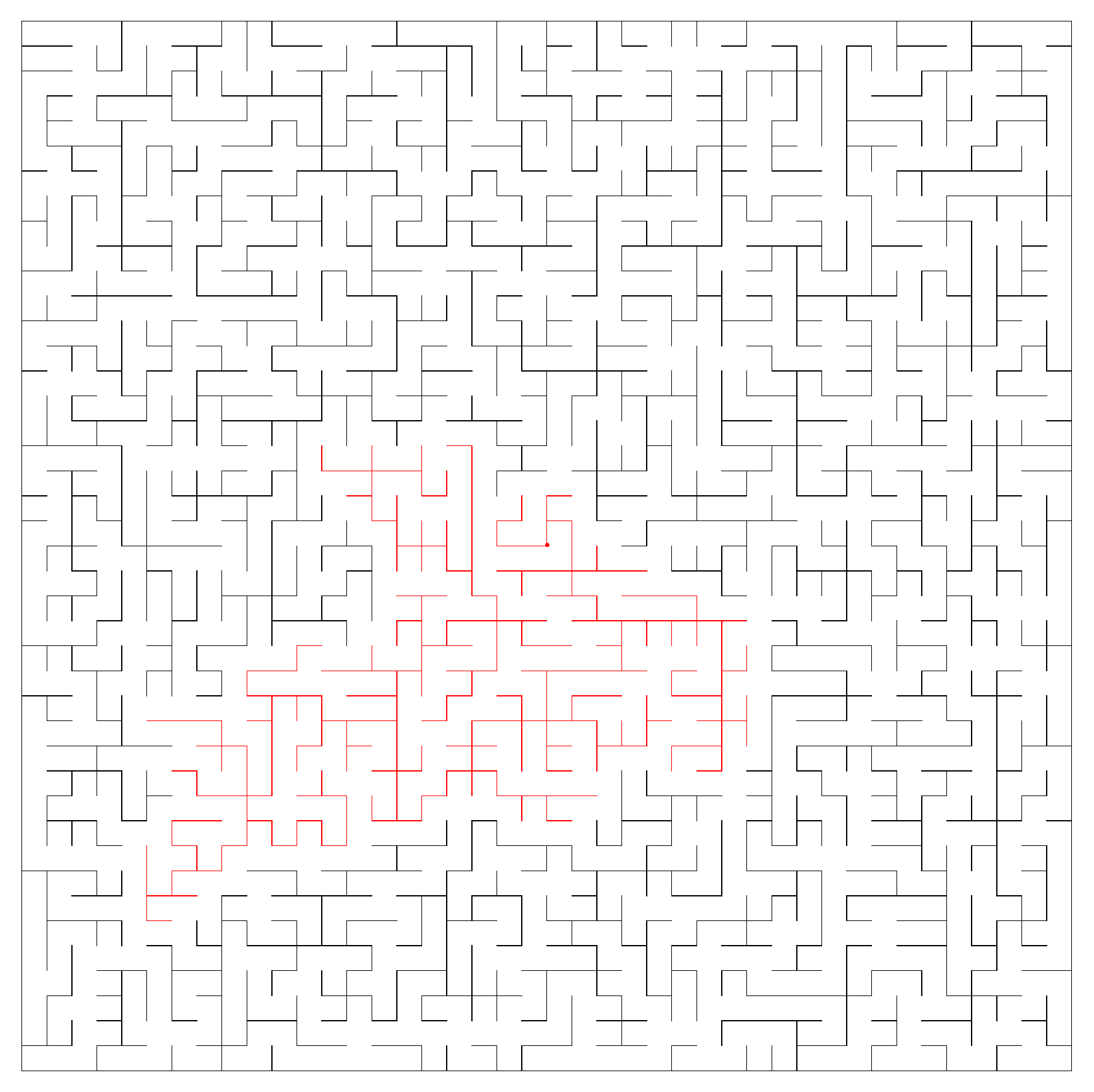} &  & %
\includegraphics[width=7cm,height=7cm]{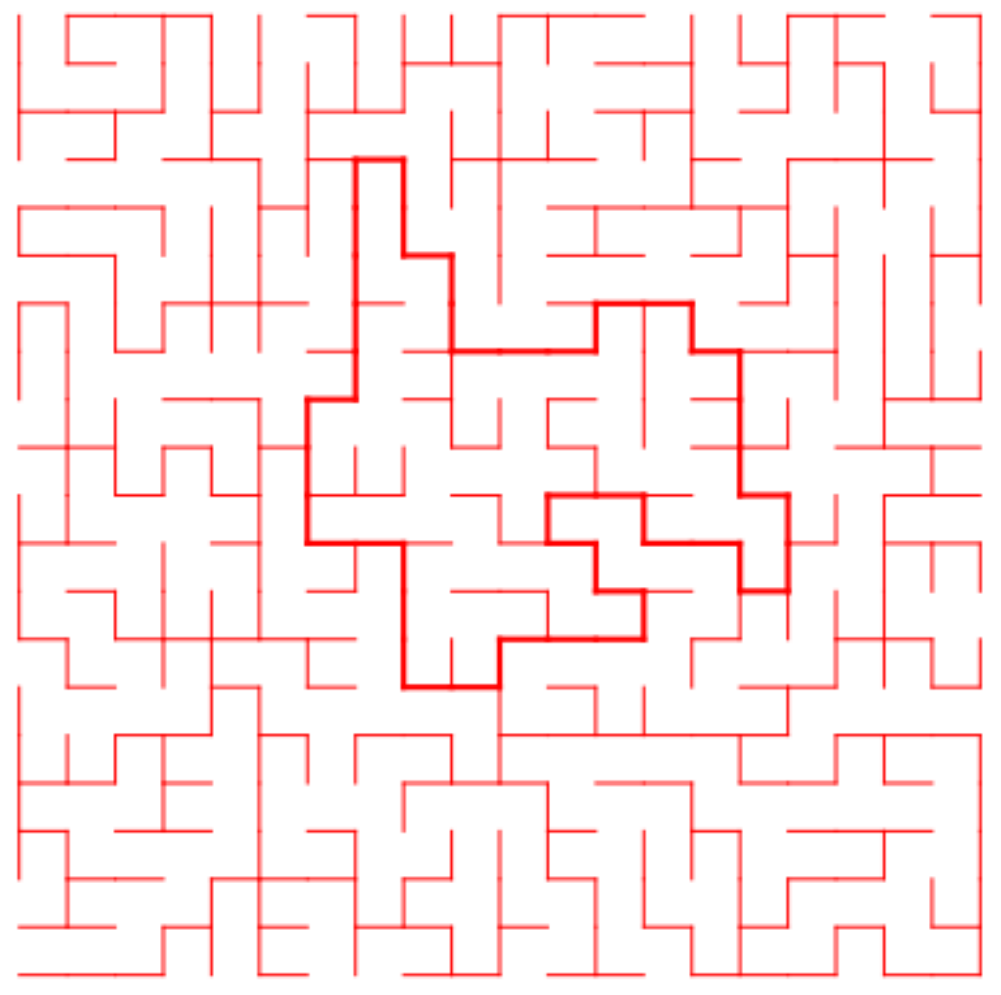}%
\end{tabular}%
\caption{Left: a two-component spanning forest wired on the boundary; Right:
a spanning unicycle on a $21\times 21$ square grid conditioned on having a
longish cycle}
\label{samples}
\end{figure}

Let $\kappa=\sum_{\mathcal{T}}\text{weight}(\mathcal{T})$ be the weighted sum of spanning trees
and $\lambda=\sum_{\mathcal{U}}\text{weight}(\mathcal{U})$ the weighted sum of spanning
unicycles. In the following, whenever $e$ denotes a directed edge $(u,v)$, we say that $u$ is its head, and $v$ its tail. Furthermore, the symbol $-e$ denotes the directed edge with same support and opposite direction, that is, $(v,u)$.

In the wired case ($\partial V\ne\emptyset$), using the fact (Theorem~3 of 
\cite{KKW}) that $\kappa G(x,y)$ is the weighted number of two-component
spanning forests such that $x$ and $y$ are disconnected from the boundary,
we have for any two directed edges $e$ and $e'$, 
\begin{equation}
T(e,e^{\prime })=\kappa ^{-1}\left( N_{e,e^{\prime }}-N_{e,-e^{\prime
}}\right) \,,  \label{trees}
\end{equation}%
where $N_{e,e^{\prime }}$ is the weighted number of two-component forests
such that the extremities of $e$ belong to different components, as well as
for the extremities of $e^{\prime }$, and these are the same for both heads
and both tails.

In the free boundary case ($\partial V=\emptyset$), we start with the
following result.

\begin{lemma}[Lemma 1.2.12 in~\cite{K13}]
\label{projector} Let $U$ be the set of all spanning unicycles. For any $%
\theta\in\Omega^1$, we have 
\begin{equation*}
\sum_{\mathcal{U}\in U}\theta_{\gamma}^2=\kappa\left\langle\theta,(1-T)\theta\right\rangle\,,
\end{equation*}
where $\theta_\gamma=\langle\theta,\delta_\gamma\rangle$ and $\gamma$ is the
choice of a directed representative of the unique cycle of the unicycle~$\mathcal{U}$.
\end{lemma}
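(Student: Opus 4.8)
The plan is to read the assertion as an identity of quadratic forms in $\theta$ and to reduce it to an entrywise identity between two $E\times E$ matrices. By the Hodge decomposition recalled above, $T$ is the matrix (in the oriented-edge basis) of the projection $P=dGd^*$ onto $\mathrm{Im}(d)$, so $1-T$ is the matrix of the $\Omega^1$-orthogonal projection $Q=1-P$ onto the cycle space $\mathrm{Ker}(d^*)$, and $\langle\theta,(1-T)\theta\rangle=\|Q\theta\|^2$. Writing $\theta_\gamma=\langle\theta,\delta_\gamma\rangle=\sum_{e\in\gamma}\theta(e)$ for the circulation of $\theta$ around the cycle, the left-hand side is the quadratic form of the frame operator $M=\sum_{\mathcal{U}}\text{weight}(\mathcal{U})\,\delta_\gamma\otimes\delta_\gamma$. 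Both $M$ and $\kappa Q$ annihilate $\mathrm{Im}(d)$ — for $M$ because the circulation of a gradient $df$ telescopes to zero around any closed cycle, and for $Q$ by construction — so the content is concentrated on the cycle space; I will establish the full identity by checking it entrywise, namely
\[
\sum_{\mathcal{U}}\text{weight}(\mathcal{U})\,\delta_\gamma(e)\,\delta_\gamma(f)=\kappa\,c(e)\bigl(\delta_{ef}-T(e,f)\bigr)
\]
for every pair of oriented edges $e,f$ (using $c(e)T(e,f)=\langle e,Pf\rangle$ to see that the right-hand side is symmetric).

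The diagonal case $e=f$ is a weight-preserving bijection. Deleting $e$ from a spanning unicycle whose cycle passes through $e$ produces a spanning tree avoiding $e$; conversely, adjoining $e$ to any spanning tree that omits it creates a unicycle whose fundamental cycle contains $e$. This bijection multiplies weights by $c(e)$, so $\sum_{\mathcal{U}:\,e\in\gamma}\text{weight}(\mathcal{U})=c(e)\sum_{\mathcal{T}\not\ni e}\text{weight}(\mathcal{T})=\kappa\,c(e)\,\mathbb{P}(e\notin\mathcal{T})=\kappa\,c(e)\bigl(1-T(e,e)\bigr)$, where the last step is the transfer current theorem $\mathbb{P}(e\in\mathcal{T})=T(e,e)$. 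As $\delta_\gamma(e)^2=\mathbf{1}_{e\in\gamma}$, this is exactly the $(e,e)$ entry.

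For $e\neq f$ the same deletion bijection shows that only unicycles whose cycle contains both edges contribute, and rewrites the left-hand side as $c(e)$ times a signed weighted sum over spanning trees $\mathcal{T}\not\ni e$ in which $f$ lies on the $\mathcal{T}$-path joining the endpoints of $e$ (equivalently, on the fundamental cycle of $e$), the sign being the relative orientation $\delta_\gamma(e)\delta_\gamma(f)$ of the two edges along that cycle. This signed spanning-tree sum is precisely the Kirchhoff expression for the transfer current — the current through $f$ induced by a unit current forced along $e$ — and equals $-\kappa\,T(e,f)$; alternatively it is the all-minors matrix-tree evaluation of the Laplacian minors defining $T(e,f)$. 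I expect this off-diagonal step to be the main obstacle, since it requires fixing the orientation conventions consistently among $\delta_\gamma$, the fundamental cycle, and the sign built into $T$, and then verifying that the weighted signed tree-count is $-\kappa\,T(e,f)$ and not its negative or transpose; the diagonal calculation and the elementary two-parallel-edge example are useful guides for pinning down these signs.

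A slicker route, closer to the vector-bundle viewpoint of \cite{K13}, avoids the sign bookkeeping: endow the edges with a weak $U(1)$-connection of holonomy $e^{i\epsilon\theta_\gamma}$ around each cycle, expand the determinant of the twisted Laplacian through its cycle-rooted-spanning-forest representation to obtain $\det\Delta^{\epsilon}=\epsilon^{2}\sum_{\mathcal{U}}\text{weight}(\mathcal{U})\,\theta_\gamma^{2}+O(\epsilon^{4})$, and identify the $\epsilon^{2}$-coefficient on the spectral side with $\kappa\langle\theta,(1-T)\theta\rangle$; single cycles dominate because each additional cycle contributes a further factor of order $\epsilon^{2}$.
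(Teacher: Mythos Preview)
Your proposal is correct and is essentially the same argument as the paper's, just unpacked. The paper invokes Proposition~7.3 of Biggs to write the operator identity $1-T=\kappa^{-1}\sum_{\mathcal T}\sum_{e\notin\mathcal T}1_{C(e,\mathcal T)}$ directly (fundamental cycles of a spanning tree span $\mathrm{Ker}(d^*)$, and the weighted average over trees gives the projection), and then the unicycle~$\leftrightarrow$~(tree, non-tree edge) bijection turns the right-hand side into $\kappa^{-1}\sum_{\mathcal U}\theta_\gamma^2$; your entrywise verification---deletion bijection on the diagonal, Kirchhoff's signed tree-count formula for $T(e,f)$ off-diagonal---is precisely a direct proof of that Biggs identity, and your ``slicker route'' via the line-bundle Laplacian expansion is exactly what the paper records in the Remark immediately following its proof.
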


\begin{proof}
It is an immediate corollary of Proposition 7.3 of~\cite{Bi}. The map $T$ is the orthogonal projection on $\mathrm{Im}(d)$, which is the orthogonal complement of $\mathrm{Ker}(d^*)$. Recall from~\cite{Bi} that a fundamental cycle associated to a spanning tree and a directed edge not on the tree, is the directed cycle defined by the union of that edge with the unique path joining the extremities of that edge in the tree (the direction of the cycle is given by the direction of the edge). A basis of $\mathrm{Ker}(d^*)$ is given by the ($1$-forms corresponding to) fundamental cycles associated to a spanning tree. Hence
$$T=1-\kappa^{-1}\sum_{\mathcal{T}\,\text{spanning tree}}\sum_{e\notin \mathcal{T}}1_{C(e,\mathcal{T})}\,,$$ where $C(e,\mathcal{T})$ is the fundamental cycle of $e$ in $\mathcal{T}$.
\end{proof}

\begin{remark}
Lemma~\ref{projector} may also be proved by expanding the determinant of the line bundle Laplacian (introduced in~\cite{Kenyon}) around the trivial connection along $\theta$ (that is, expand near $t=0$ the family of Laplacian determinants associated to the connection $\exp(t i\theta)$ ). This yields yet another geometric interpretation of the transfer current.
\end{remark}

In the planar case, we obtain the following. Taking $\theta $ to be the
indicator on directed edge $e^{\ast },e^{\prime \ast }$ on the dual graph,
we have 
\begin{equation}
T^{\ast }(e^{\ast },e^{\prime \ast })=\kappa ^{-1}\left( N_{e^{\ast
},e^{\prime \ast }}-N_{e^{\ast },-e^{\prime \ast }}\right) \,,  \label{crst}
\end{equation}%
where $N_{e^{\ast },e^{\prime \ast }}$ is the number of spanning unicycles,
the cycle of which contains $e^{\ast }$ and~$e^{\prime \ast}$. In the planar
case, the dual of a $2$SF is a spanning unicycle. It follows from~%
\eqref{trees} and~\eqref{crst} that $T(e,e^{\prime })=T^{\ast }(e^{\ast
},e^{\prime \ast })$, which gives another proof of Corollary~\ref{tidual}
above.

Lemma~\ref{projector} gives a way to compute the expected winding of the
cycle of the spanning unicycle.

We conclude this subsection by giving two applications of Theorem~\ref%
{projector} to the case of planar graphs or graphs embedded in ${\mathbb{R}}%
^3$.

For planar graphs with two marked faces $f$ and $f$', let $Z$ and $Z^{\prime
}$ be the set of east-directed edges crossed by two disjoint dual paths from
the outer face to $f$ and~$f$', respectively. Taking $\theta $ to be the $1$%
-form indicator of $Z$ and $Z^{\prime }$, we obtain that the probability
that two faces lies inside the cycle of the uniform spanning unicycle on a
planar graph is 
\begin{equation*}
\P (f,f^{\prime })=-\frac{\kappa }{\lambda }\sum_{e\in Z,e^{\prime }\in
Z^{\prime }}T(e,e^{\prime })\,.
\end{equation*}%
This formula simplifies to $\P (f,f^{\prime })=\left( \kappa /\lambda
\right) G^{\ast }(f,f^{\prime })$ by Corollary~\ref{tidual} and was observed
in~\cite{KKW}. For any isoradial graphs, the ratio $|V_{n}|\kappa /\lambda $
converges to a constant $\tau $, see~\cite{KW13} (and~\cite{LP} for the case
of ${\mathbb{Z}}^{2}$).

In the case of ${\mathbb{Z}}^3$, we obtain the following.

\begin{corollary}
Consider ${\mathcal{G}}_n$ to be the subgraph of ${\mathbb{Z}}^3$ whose
vertex-set is $\llbracket 1,n\rrbracket^3$. Let $f$ be a face in the $x,y$ plane with coordinates $(x,y)$. Let $\k$ be
the winding number of the uniform spanning unicycle in~${\mathcal{G}}_n$ around the tube $f%
\times\llbracket 1,n\rrbracket$. Its second moment is given by 
$$
\mathbb{E}(\k^2)=\frac{1}{\tau_n n^3}\left(ny-\sum_{y_i,y_j=1}^{y}%
\sum_{z,z^{\prime }=1}^{n}T(e_i,e^{\prime }_j)\right)\,, 
$$
where $\tau_n\to\tau$, and $\tau$ is a constant.
\end{corollary}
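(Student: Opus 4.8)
The plan is to realize the squared winding number as $\theta_\gamma^2$ for a suitable $1$-form $\theta$, apply Lemma~\ref{projector}, and then evaluate the two resulting inner products explicitly.

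First I would encode the winding number as a pairing with a discrete cut. Writing $(x,y)$ for the coordinates of the face $f$, consider the discrete half-plane $Z$ sitting at the fixed abscissa $x$, spanning the ordinates $1\le y_i\le y$ and the full height $1\le z\le n$; concretely $Z$ is the set of $x$-directed edges $e_i=e_{(y_i,z)}$ piercing this half-plane, indexed by $(y_i,z)$. Let $\theta=\sum_{e_i\in Z}\delta_{e_i}$ be the associated $1$-form, each edge carrying the orientation that crosses the cut positively. Since the boundary of this half-plane is exactly the core line of the tube $f\times\llbracket 1,n\rrbracket$, the pairing $\theta_\gamma=\langle\theta,\delta_\gamma\rangle$ computes the signed number of times the cycle $\gamma$ of a unicycle crosses the cut, which is precisely its winding number $\k$ around the tube. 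Thus $\theta_\gamma=\k$ for every spanning unicycle $\mathcal{U}$.

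Next I would apply Lemma~\ref{projector} to this $\theta$. Summing over unicycles (each of weight $1$ on $\Z^3$) and dividing by $\lambda=\sum_{\mathcal{U}}\mathrm{weight}(\mathcal{U})$ gives
\[
\mathbb{E}(\k^2)=\frac{1}{\lambda}\sum_{\mathcal{U}\in U}\theta_\gamma^2=\frac{\kappa}{\lambda}\langle\theta,(1-T)\theta\rangle\,.
\]
It then remains to compute the two inner products. Since the conductances equal $1$, the norm $\langle\theta,\theta\rangle$ merely counts the edges of the cut, so $\langle\theta,\theta\rangle=|Z|=ny$. For the second term, expanding $\theta$ in the edges $e_i$ and using $\langle e_i,P e_j\rangle=c(e_i)T(e_i,e_j)=T(e_i,e_j)$ from~\eqref{ti} (with $P=T$ the projection onto $\mathrm{Im}(d)$), one gets $\langle\theta,T\theta\rangle=\sum_{y_i,y_j=1}^{y}\sum_{z,z'=1}^{n}T(e_i,e'_j)$, where $e_i=e_{(y_i,z)}$ and $e'_j=e_{(y_j,z')}$ run over $Z$. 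Combining the two and writing $\kappa/\lambda=1/(\tau_n n^3)$ yields the stated formula; the convergence $\tau_n\to\tau$ is the three-dimensional counterpart of the ratio convergence $|V_n|\kappa/\lambda\to\tau$ recalled above.

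The main obstacle is the first step: verifying that the combinatorial pairing $\langle\theta,\delta_\gamma\rangle$ really equals the topological winding number of the cycle around the tube, with the correct sign conventions and with exactly the $ny$ edges claimed. This amounts to checking that $\theta$ is dual to a cut surface whose boundary is the tube's core line — so that the pairing is a genuine homological intersection number, invariant under homologous deformations of $\gamma$ — and to bookkeeping the orientations so that the count comes out as $ny$ rather than off by a boundary layer. Everything after that is a direct application of Lemma~\ref{projector} and a routine evaluation of the inner products.
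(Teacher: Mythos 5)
Your proposal is correct and takes essentially the same approach as the paper: the paper's proof also introduces the $1$-form indicator of a directed ``curtain'' $\pi\times\llbracket 1,n\rrbracket$ joining the tube to the boundary (your half-plane of $ny$ edges at abscissa $x$), identifies $\langle\theta,1_\gamma\rangle$ with the winding number, and applies Lemma~\ref{projector} together with $\tau_n=\lambda/(\kappa n^3)\to\tau$ from~\cite{LP}. The only difference is cosmetic phrasing of the cut surface; the computation of the two inner products is identical.
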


\begin{proof}
Consider a $1$-form $\theta$ defined to be the indicator of the edges on a directed ``curtain" of edges that connect the tube $f\times \llbracket 1,n\rrbracket$ to the boundary of $\llbracket 1,n\rrbracket^3$: more precisely we consider a path $\pi$ in the $xy$ plane from $f$ to the boundary and the curtain consists of the edges on $\pi\times \llbracket 1,n\rrbracket$ oriented consistently. In that way, any oriented simple closed curve~$\gamma$ satisfies $\left\langle \theta,1_\gamma\right\rangle=k$ where $k$ is the algebraic winding number of that curve around the tube. 

By Lemma~\ref{projector} we therefore have
\begin{align*}
\E(\k^2)&=\frac{\kappa}{\lambda}\left\langle\theta,(1-T)\theta\right\rangle\\
&=\frac{\kappa}{\lambda}\bigl(ny-\sum_{y_i,y_j=1}^{y}\sum_{z,z'=1}^{n}T(e_i,e'_j)\bigr)\,, 
\end{align*}
where $\tau_n=\lambda/(\kappa n^3)$ converges to a constant as shown in~\cite{LP}.
\end{proof}

Higher dimensional analogs of the previous results can be derived using
Lemma~\ref{projector} and the fact that the ratio $\tau_n$ converges by~\cite%
{LP}.

\subsection{Minimal subconfigurations of the abelian sandpile}

The \emph{abelian sandpile model} is constructed as the stationary
distribution of a certain Markov chain on integer functions over interior
vertices of a graph. A sandpile configuration is a particle configuration $%
\eta :V^{\circ}\rightarrow \mathbb{N}$. If for some $x\in V^{\circ}$ we have 
$\eta \left( x\right) \geq \deg x$, then the vertex $x$ may topple, that is
send one particle to each of its neighbors (this corresponds to the
operation $\eta \mapsto \eta -\Delta (\delta _{x})$). Particles that reach
the boundary $\partial V$ (referred to as the \emph{sink} in this context)
are lost. A sandpile is stable if $\eta \left( x\right) <\deg x$ for any $%
x\in V^{\circ}\setminus \{s\}$. A bilinear operation $\oplus$ can be defined
on the set of stable configurations: addition in $\mathbb{Z}^{V}$ followed
by toppling until stabilization. (The order in which toppling occurs does
not matter, it is an abelian network~\cite{D}.)

The Markov chain on the set of stable sandpiles is the following. At any
given time, add (using $\oplus $) a particle to the configuration at a
uniformly chosen vertex in~$V^{\circ }$. It was shown in \cite{D} that the
stationary distribution of this chain is unique and is uniform on the set of
recurrent states. The \emph{recurrent configurations} of the sandpile are defined as
the recurrent states of this Markov chain.

The set of recurrent configurations has a local description given by the
burning algorithm~\cite{D}. A stable configuration is recurrent if and only
if any subconfiguration satisfies that it is unstable regarded as a
configuration on the subgraph on which it lives. Using this criterion,
Majumdar and Dhar constructed a bijection (called the burning bijection)
mapping recurrent configurations of the abelian sandpile to spanning trees~%
\cite{MD,MD2}. It depends on the choice of an ordering of edges around each
vertex. See~\cite{Ruelle} for a theoretical physics perspective on the
sandpile as a field theory.

In the case of a weighted graph, we define a measure $\nu$ on the set of
recurrent configurations which is the pullback of the weighted spanning tree
measure under the burning bijection. There is a way to relate $\nu$ to the
discrete projection of the continuous sandpile (whose dynamics is analog to
the one described above for unweighted graphs~\cite{Gabr,KW13}).

Under the burning bijection, any local event for the sandpile therefore
translates into an event for the spanning tree, however not necessarily
local. An important concept is that of \emph{minimal subconfiguration} (the
original definition is from~\cite{DM90} where it is called weakly allowed
subconfiguration).

A subconfiguration on a subgraph $W$ is \emph{minimal} if it is part of a
recurrent configuration, but by decreasing any of its heights, this is no
longer true. In particular, conditional on a minimal subconfiguration, the
measure on the outside of $W$ is the sandpile measure on $G\setminus W$,
still with sink at $\partial V$.

The easiest example of a minimal subconfiguration is a single vertex with
height $0$, or any collection of vertices with $0$ height provided none of
these vertices are neighbors. There are minimal subconfigurations on any
subgraph $W$ of ${\mathcal{G}}$, see Figure~\ref{MinimalConfig} for a more
elaborate example.

\begin{figure}[ht]
\centering
\includegraphics[width=6cm,height=4cm]{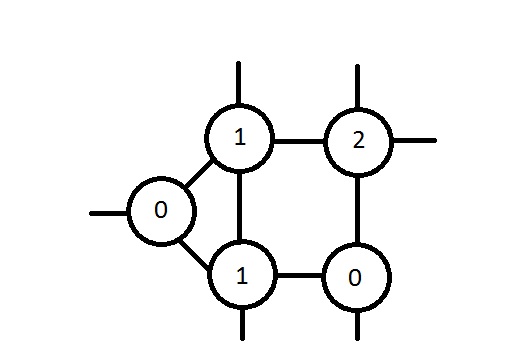}
\caption{Example of a minimal subconfiguration}
\label{MinimalConfig}
\end{figure}

Let ${\mathcal{G}}_{W}$ be the graph obtained by wiring all the vertices in $%
G\backslash W$ with $\partial V$, and removing all self edges. The following
proposition is the generalization of Theorem~1 of~\cite{JW} to the weighted
graph setting.

\begin{proposition}
\label{minimaldet}
Given any spanning tree $\mathcal{T}$ of $\G_{W}$, let $\mathcal{E}$ be the edge set  
\begin{equation*}
\mathcal{E}\text{=}\left\{ \left( x,y\right) :x\in W\text{, }\left\{
x,y\right\} \notin \mathcal{T}\right\}\,.
\end{equation*}
For any minimal configuration $\xi$ supported on $W$, there is a spanning tree $\mathcal{T_0}$ of $\G_W$ (given by our choice of burning bijection) such that
\begin{equation*}
\P (\eta _{W}=\xi )=\det (I-T)_{\mathcal{E_0}}=\sum_{\mathcal{T}}\frac{\mathrm{weight}\left( \mathcal{T_0}\right) }{\sum_{\mathcal{T}}\mathrm{weight}\left( \mathcal{T}\right)}
\det (I-T)_{\mathcal{E}}\,,
\end{equation*}
where the sums are over spanning trees of~$\G_W$.
\end{proposition}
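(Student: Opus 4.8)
The plan is to pass from the sandpile event $\{\eta_W=\xi\}$ to a spanning tree event via the burning bijection, and then to read off the probability from the determinantal structure carried by the transfer current. First I would use the conditional decoupling noted above---given $\eta_W=\xi$ the law outside $W$ is the sandpile on $\G\setminus W$ with sink at $\partial V$---to reduce the computation of $\P(\eta_W=\xi)$ to the wired graph $\G_W$. On $\G_W$ the burning criterion of~\cite{D} together with the Majumdar--Dhar bijection~\cite{MD,MD2} identifies recurrent configurations supported on $W$ with spanning trees of $\G_W$, and (for the fixed edge ordering) sends the minimal configuration $\xi$ to the distinguished tree $\mathcal{T}_0$. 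The combinatorial heart of the argument, generalizing Theorem~1 of~\cite{JW} to weighted graphs, is the claim that $\{\eta_W=\xi\}$ is \emph{exactly} the event that the spanning tree uses none of the edges of $\mathcal{E}_0$; equivalently, that among the edges incident to $W$ the tree uses precisely those of $\mathcal{T}_0$.

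The reason to expect a pure avoidance event---with no edge forced to be \emph{present}---is that minimality is the condition under which decreasing any height destroys recurrence. I would establish the claim by running the burning algorithm vertex by vertex: a vertex $x\in W$ carrying its minimal admissible height is burnt only after the largest possible set of its neighbors, so the incident tree edge recorded by the bijection is forced, and the height records nothing beyond the prohibition of the competing incident edges, which are exactly the elements of $\mathcal{E}_0$ meeting $x$. Conversely, forbidding every edge of $\mathcal{E}_0$ leaves available only the edges of $\mathcal{T}_0$ incident to $W$, and connectivity of the spanning tree then forces the burnt configuration to agree with $\xi$ on $W$. This step is the main obstacle, since it is where the definition of minimal subconfiguration of~\cite{DM90} must be matched precisely with the combinatorics of the bijection, and where one must rule out any additional constraint (in particular any forced presence) arising from interactions between vertices of $W$.

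Granting the correspondence, the probability is immediate from the kernel. The transfer current theorem of Burton and Pemantle~\cite{BP}, valid for weighted graphs, makes the spanning tree a determinantal process with kernel $T$; hence the probability of avoiding a prescribed edge set is the complementary principal minor, $\P(\mathcal{T}\cap\mathcal{E}_0=\emptyset)=\det(I-T)_{\mathcal{E}_0}$. Combined with the claim this yields $\P(\eta_W=\xi)=\det(I-T)_{\mathcal{E}_0}$, the weighting of trees by the product of their edge-weights entering only through $T$ itself, so that no modification of the determinantal identity is required.

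Finally, for the second equality I would expand the minor combinatorially. Since avoiding all of $\mathcal{E}_0$ pins the $W$-incident edges of the tree to those of $\mathcal{T}_0$, the minor $\det(I-T)_{\mathcal{E}_0}$ equals the normalized weight of the spanning trees compatible with $\mathcal{T}_0$, namely $\mathrm{weight}(\mathcal{T}_0)$ divided by the partition function $\sum_{\mathcal{T}}\mathrm{weight}(\mathcal{T})$; regrouping the trees of $\G_W$ by the restriction they induce on $W$ then produces the stated sum. This last step is pure bookkeeping once the determinantal identity and the decoupling are in place.
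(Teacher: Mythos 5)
Your route to the first equality is essentially the paper's: identify $\{\eta_W=\xi\}$ with the event that the spanning tree of $\G$ avoids the edge set $\mathcal{E}_0$ determined by the tree $\mathcal{T}_0$ of $\G_W$ attached to $\xi$ by the burning bijection, then read off $\det(I-T)_{\mathcal{E}_0}$ from Burton--Pemantle. The paper outsources the combinatorial core to Lemma~4 of~\cite{JW} (minimal subconfigurations correspond exactly to spanning trees of $\G$ whose projection onto $\G_W$ is again a spanning tree), whereas you sketch a direct burning-algorithm argument; your forward direction is plausible but incomplete, and your converse (``connectivity forces the burnt configuration to agree with $\xi$'') quietly uses that the bijection's output on $W$ depends only on the projection of the tree onto $\G_W$, which is precisely the content of the cited lemma. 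Also, your opening ``conditional decoupling'' step does not actually reduce anything to $\G_W$ --- the probability is still computed under the tree measure on all of $\G$ --- and neither the paper nor the rest of your own argument uses it.

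The genuine error is in your last paragraph. You assert $\det(I-T)_{\mathcal{E}_0}=\mathrm{weight}(\mathcal{T}_0)/\sum_{\mathcal{T}}\mathrm{weight}(\mathcal{T})$. This is false: the left side is the probability, under the spanning tree measure on $\G$, that the projection onto $\G_W$ equals $\mathcal{T}_0$, while the right side is the probability of $\mathcal{T}_0$ under the spanning tree measure on $\G_W$ itself. (Take $W=\{x\}$ a single vertex of $\Z^2$ and $e$ an edge at $x$: the right side is $1/4$, the left side is the probability that $x$ is a leaf attached via $e$, which is far smaller.) The displayed identity must be read as $\det(I-T)_{\mathcal{E}_0}=\bigl(\mathrm{weight}(\mathcal{T}_0)/\sum_{\mathcal{T}}\mathrm{weight}(\mathcal{T})\bigr)\cdot\sum_{\mathcal{T}}\det(I-T)_{\mathcal{E}(\mathcal{T})}$, i.e.\ conditionally on $\eta_W$ being minimal, the induced tree on $\G_W$ is distributed as the \emph{weighted} spanning tree of $\G_W$. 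Proving this requires the step your ``pure bookkeeping'' skips: the weighted number of completions of the lift of $\mathcal{T}_0$ to a spanning tree of $\G$ by edges outside $W$ must not depend on $\mathcal{T}_0$. It does not, because each component of $\mathcal{T}_0$ minus the wired vertex reaches the sink through exactly one tree edge, so each component of the lifted forest contains exactly one vertex outside $W$, contraction never identifies distinct outside vertices, and the completion count is always $\kappa(\G\setminus W)$; hence the projection probability is proportional to $\mathrm{weight}(\mathcal{T}_0)$. As written, your two claims are mutually inconsistent with your own first equality unless $\sum_{\mathcal{T}}\det(I-T)_{\mathcal{E}}=1$, which fails (that sum is the probability that $\eta_W$ is minimal).
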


\begin{proof}
By Lemma 4 in~\cite{JW}, the minimal subconfigurations are mapped (by the burning bijection) to spanning trees for which the projection of the tree on $\G_W$ is again a tree.
Therefore, the probability of a minimal configuration is the probability that the small tree is the image $\T_W$ of the subconfiguration on $\G_W$. By the Gibbs property of the weighted spanning tree measure, this is simply the probability that the tree measure on $\G$ does not contain the edges of~$\mathcal{E}_W$.
Under the choice of orientation, the probability that (for a fixed subconfiguration) we map it to $\T_W$ is proportional to the edge weights in the tree.
\end{proof}

Proposition~\ref{minimaldet} and Theorem~\ref{ticv} imply the following.

\begin{theorem}\label{corrASM}
Under the hypotheses of Theorem~\ref{ticv}, the rescaled correlations of a finite number of macroscopically distant minimal subconfigurations has a universal scaling
limit. In particular, the rescaled correlations of zero heights
converge to a universal scaling limit (which is conformally covariant in
dimension $2$).
\end{theorem}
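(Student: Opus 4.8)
The plan is to reduce the statement about correlations of minimal subconfigurations to a statement about the spanning tree model, where Theorem~\ref{ticv} can be applied directly. By Proposition~\ref{minimaldet}, for any minimal subconfiguration $\xi$ supported on a subgraph $W$, the probability $\P(\eta_W=\xi)$ is expressed as a ratio of spanning tree weights times $\det(I-T)_{\mathcal{E}}$, where $\mathcal{E}$ is a finite set of edges determined by $W$ and the burning bijection. The first step is to observe that a collection of $m$ macroscopically distant minimal subconfigurations, supported on disjoint subgraphs $W_1,\ldots,W_m$ around points $z_1,\ldots,z_m$, corresponds to a finite edge set $\mathcal{E}=\bigcup_i \mathcal{E}_i$, and that its joint probability is again governed by $\det(I-T)_{\mathcal{E}}$ (together with the local weight factors, which are fixed combinatorial quantities depending only on the microscopic structure near each $z_i$ and not on the macroscopic separation).

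Next I would expand the correlation of the indicators of the subconfigurations in terms of these determinants, just as in the proof of Theorem~\ref{corrUST}. The key algebraic point is that the connected correlation (covariance) of the pattern indicators is a sum of products of minors of $T$ in which each factor joins at least two of the distinct blocks $\mathcal{E}_i$; the fully block-diagonal contributions cancel exactly as in the covariance expansion $\cov(e_1,\ldots,e_k)=\det(T(e_i,e_j)\delta_{i\neq j})$ used above. Consequently the leading behaviour of the rescaled correlation is controlled by the off-diagonal entries $T(e,e')$ with $e\in\mathcal{E}_i$, $e'\in\mathcal{E}_j$, $i\neq j$, i.e. transfer currents between macroscopically distant edges.

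With this reduction in hand, I would apply Theorem~\ref{ticv} to each such off-diagonal entry: each $T(e_n,e'_n)$ with $e$ near $z_i$ and $e'$ near $z_j$ ($i\neq j$) is, to leading order, $c(e'_n)n^{-d}\,dg_D|_{(z_i,z_j)}(e,e')$ plus a lower-order error. Substituting these asymptotics and collecting the appropriate power of $n$ (one factor of $n^{-d}$ per off-diagonal transfer current appearing in the expansion), the rescaled connected correlation converges to an explicit universal limit built out of the differentials $dg_D$ of Green's function and the fixed local weight factors. Universality follows because $dg_D$ is independent of the approximating family, and in dimension $2$ conformal covariance follows from that of $dg_D$ exactly as in Theorem~\ref{corrUST}. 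The special case of zero heights is the instance where each $W_i$ is a single vertex of height $0$, for which $\mathcal{E}_i$ consists of the edges incident to that vertex.

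The main obstacle, and the step requiring the most care, is controlling the combinatorial prefactors and verifying that the determinantal expansion of the joint probability factorizes cleanly into macroscopic and microscopic parts. One must check that the local weight ratios $\mathrm{weight}(\mathcal{T}_0)/\sum_{\mathcal{T}}\mathrm{weight}(\mathcal{T})$ from Proposition~\ref{minimaldet}, together with the diagonal minors of $T$ restricted to each $\mathcal{E}_i$, converge to finite nonzero constants as $n\to\infty$ (so that they do not interfere with the scaling), and that no spurious cancellation or lower-order term dominates when the blocks are well separated. This is where the hypothesis of macroscopic distance is essential, and where the error control of Theorem~\ref{ticv} must be combined with the boundedness of the local determinantal factors.
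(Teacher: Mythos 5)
Your proposal is correct and follows exactly the route the paper intends: the paper's own ``proof'' is simply the assertion that Proposition~\ref{minimaldet} combined with Theorem~\ref{ticv} yields the result, and your argument fills in precisely those details (the reduction to $\det(I-T)_{\mathcal{E}}$, the covariance expansion isolating off-diagonal blocks as in Theorem~\ref{corrUST}, and the substitution of the transfer current asymptotics). No genuinely different ideas are involved; you have just made explicit what the paper leaves implicit.
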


\subsection{Discrete Gaussian free field current flow}

The \emph{discrete Gaussian free field} (DGFF) $\Gamma$ (with free or wired
boundary conditions) is the Gaussian vector with zero mean and covariance
matrix given by Green's function (with free or wired boundary conditions).

The current flow of the DGFF, defined by $J=d\Gamma$, is a Gaussian $1$-form
with zero mean and covariance matrix given by the transfer current, since
for any $1$-form $\alpha$, we have 
\begin{align*}
\mathbb{E}((J\alpha)^2)&=\mathbb{E}((\Gamma d^*\alpha)^2)=\left\langle
d^*\alpha,G d^*\alpha \right\rangle \\
&=\left\langle\alpha,dGd^*\alpha\right\rangle=\left\langle\alpha,T\alpha%
\right\rangle\,,
\end{align*}
where we used the fact that $d$ and $d^*$ are dual.

Whenever the discrete Green function converges to the continuous one, the
discrete Gaussian free field converges weakly in distribution to the
continuous Gaussian free field. In virtue of Theorem~\ref{ticv}, it is also
the case for the current flow, which converges to its continuous counterpart.

\begin{theorem}\label{dgff}
Under the hypotheses of Theorem~\ref{ticv}, the flows $J_n$ converge weakly in distribution to $d\Gamma$, where $\Gamma$ is the GFF and the derivative is in the distributional sense.
\end{theorem}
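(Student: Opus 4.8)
The plan is to reduce everything to covariances. Both the (suitably embedded and rescaled) discrete flows $J_n$ and the target $d\Gamma$ are \emph{centered Gaussian} fields, so weak convergence in distribution reduces to two ingredients: convergence of finite-dimensional distributions, which for Gaussian fields is exactly convergence of the covariance bilinear forms, together with tightness in an appropriate negative Sobolev space. Concretely, I would pair $J_n$ against a smooth compactly supported test vector field $\phi$ on $D$ through a discretization $\phi_n\in\Omega^1$ (assigning to each edge the flux of $\phi$ along it, with the correct mesh normalization), so that $\langle J_n,\phi_n\rangle$ is a centered Gaussian with variance $\langle\phi_n,T_n\phi_n\rangle$. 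The core of the argument is then to show, for every pair $\phi,\psi$ of test fields, that $\cov(\langle J_n,\phi_n\rangle,\langle J_n,\psi_n\rangle)=\langle\phi_n,T_n\psi_n\rangle$ converges to the covariance $\E[\langle d\Gamma,\phi\rangle\langle d\Gamma,\psi\rangle]$ of the continuous current flow.

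To identify the limit I would use the two expressions for this bilinear form recorded in Section~\ref{dha}, namely $\langle\phi_n,T_n\psi_n\rangle=\langle d^*\phi_n,G_n\,d^*\psi_n\rangle$. The transfer-current form is a double Riemann sum over pairs of edges $e\approx z$, $e'\approx w$; by Theorem~\ref{ticv} each off-diagonal entry is $c(e')n^{-d}\,dg_D|_{(z,w)}(e,e')+o(n^{-d}|z-w|^{-d})$, so (after accounting for the conductance and mesh normalization) the sum converges to $\iint_{D\times D}dg_D|_{(z,w)}(\phi(z),\psi(w))\,dz\,dw$, which is exactly $\E[\langle d\Gamma,\phi\rangle\langle d\Gamma,\psi\rangle]$. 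The equivalent summation-by-parts form is the more robust route: the discrete codifferentials $d^*\phi_n,\,d^*\psi_n$ converge to the smooth densities $\mathrm{div}\,\phi,\,\mathrm{div}\,\psi$, and (assumption A1, equivalently the already-noted convergence of the DGFF to the GFF) $n^{d-2}G_n$ converges to $g_D$, so the form converges to $\iint g_D(x,y)\,\mathrm{div}\,\phi(x)\,\mathrm{div}\,\psi(y)\,dx\,dy$, which agrees with the previous expression after integrating by parts twice.

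The delicate point, and the reason I would favour the second expression, is the short-distance (near-diagonal) regime. Theorem~\ref{ticv} only controls macroscopically distant edges, and its error term $o(n^{-d}|z-w|^{-d})$ is not summable up to the diagonal, so in the direct transfer-current route one must argue separately that pairs $e,e'$ at mesh-order distance contribute negligibly. The summation-by-parts identity removes this difficulty at the source: it moves both derivatives onto the smooth test fields and leaves the kernel $G_n$, whose continuum limit $g_D$ is locally integrable ($\sim|x-y|^{2-d}$ for $d\ge3$, logarithmic for $d=2$); hence the diagonal region contributes $o(1)$ to the double integral against the bounded densities $\mathrm{div}\,\phi,\,\mathrm{div}\,\psi$, with no ad hoc cancellation needed. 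Most conceptually, $\langle J_n,\phi_n\rangle=\langle d\Gamma_n,\phi_n\rangle=\langle\Gamma_n,d^*\phi_n\rangle$ by duality of $d$ and $d^*$, so convergence of the current flow is just convergence of the DGFF tested against the fixed smooth function $d^*\phi_n\to\mathrm{div}\,\phi$; differentiation being continuous in the distributional topology, $\Gamma_n\to\Gamma$ forces $d\Gamma_n\to d\Gamma$.

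Finally, tightness follows from the uniform bound $\langle\phi_n,T_n\phi_n\rangle=\langle d^*\phi_n,G_n\,d^*\phi_n\rangle\le\|G_n\|\,\|d^*\phi_n\|^2$, which stays bounded for $\phi$ in a bounded subset of a sufficiently high Sobolev space, placing the laws of $J_n$ in a fixed relatively compact ball of a negative Sobolev space. Combining the Gaussian finite-dimensional convergence above with this tightness yields $J_n\to d\Gamma$ weakly in distribution. The only routine verification left is that the chosen discretization $\phi\mapsto\phi_n$ is simultaneously compatible with the flux pairing and with the codifferential, i.e.\ that $d^*\phi_n\to\mathrm{div}\,\phi$ under precisely the normalization for which $n^{d-2}G_n\to g_D$, which also fixes the overall sign so that the limit is $d\Gamma$ rather than its negative.
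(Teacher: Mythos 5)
Your proposal is correct and follows essentially the same route as the paper, whose entire argument is the duality identity $\E((J\alpha)^2)=\E((\Gamma d^*\alpha)^2)=\langle d^*\alpha,Gd^*\alpha\rangle=\langle\alpha,T\alpha\rangle$ together with the observation that convergence of the Green function (equivalently, of the transfer current via Theorem~\ref{ticv}) gives convergence of the centered Gaussian field. Your added care about the near-diagonal contributions (which the direct transfer-current route does not control) and about tightness makes explicit what the paper leaves implicit, but the underlying idea --- reduce $J_n$ to $\Gamma_n$ tested against $d^*\phi_n$ and invoke Gaussianity --- is the same.
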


\section{Random point fields}

\label{dpp}

\subsection{Patterns}

A simple point process (point process for short) on a discrete~\footnote{%
The result of this section easily adapts to the case of (uncountable) Polish
spaces but we restrict to the discrete case to avoid cumbersome
technicalities.} space $\Omega$ is a random subset ${\mathcal{T}}%
\subset\Omega$, or equivalently, a random configuration of $\{0,1\}$-valued
spins $\sigma_x$ at each point $x\in\Omega$. We define a \emph{pattern} to
be a finite (deterministic) spin configuration which can be represented as a
pair of disjoint sets $\mathbf{x}=(\mathbf{x}_0,\mathbf{x}_1)$, where $%
\mathbf{x}_0$ is the set of points with $0$ spin and $\mathbf{x}_1$ the set
of points with spin $1$. The \emph{support} of a pattern is the set $\mathbf{%
x}_0\cup\mathbf{x}_1$ which we denote by $\{\mathbf{x}\}$. We say that two
patterns are disjoint when their supports are disjoint. A mono-pattern is a
pattern where all spins have the same value.

Given a collection of patterns $X$, a point process $\xi$ on $\Omega$
induces a point process~$\xi_X$ on the disjoint union of the patterns, which
we may still view as a point process on~$\Omega$ (for example, by fixing an
ordering of points in $\Omega$ and identifying the pattern with its element
of least index). This is the \emph{pattern field} associated to $X$.

Formally, a pattern field can be represented as $\xi _{X}=\sum_{\mathbf{x}%
\in X}\delta _{\mathbf{x}}\,$, which is an element of the dual of $\ell
^{2}(\Omega )$. We will omit the dependence on $X$ if it is clear from the
context. For any $f\in \ell ^{2}(\Omega )$, we write $\xi (f)=\langle \xi
,f\rangle $ and $\xi (A)$ if $f=1_{A}$ is the indicator of a subset~$%
A\subset \Omega $.

\subsection{Symmetric and block determinantal processes}

Let $K$ be the kernel of a trace-class self-dual map in $\ell ^{2}(\Omega )$
with eigenvalues in $[0,1]$, that is a symmetric matrix indexed by $\Omega $%
. For any finite set $A$, denote $K_{A}$ the restriction of~$K$ onto~$\ell
^{2}(A)$. Let $\widehat{I}_A$ be the diagonal matrix with $1$s on the entries indexed by $A$ and $0$ elsewhere. 
$K$ defines a symmetric determinantal process, which means that for any
pattern $\mathbf{x}$, we have

\begin{equation}
\P (\mathbf{x})=\det\left(-\widehat{I}_{\mathbf{x}_0}+K\right)_{\{\mathbf{x}\}}\,.
\end{equation}
The kernel $I-K$ is a kernel for the complement process $\Omega\setminus{%
\mathcal{T}}$.

Although the underlying point process is determinantal, the pattern field is
not. However, joint probabilities of patterns can still be written as a
large minor of a modified matrix. More precisely,%
\begin{equation}
\P (\mathbf{x}^{\left( 1\right) },...,\mathbf{x}^{\left( k\right) })=\det
\left( -\widehat{I}_{\cup _{j=1}^{k}\mathbf{x}_{0}^{\left( j\right) }}+K\right) _{\cup
_{j=1}^{k}\{\mathbf{x}^{(j)}\}}.  \label{kpattern}
\end{equation}
Therefore it can be viewed as a block determinantal process, see Section~%
\ref{blockdet} for more details.

It follows from a short computation that for any finite set $A\subset \Omega$%
, we have 
\begin{equation}  \label{detBernoulli}
\mathbb{E}\left(z^{\xi(A)}\right)=\det (zK_{A}+I-K_{A})=\prod_{\lambda \in 
\mathrm{Spec}(K_{A})}(z\lambda +(1-\lambda ))\,,
\end{equation}
which signifies that the number of points in $A$ is a sum of independent
Bernoullis. These facts are well-known, see e.g. Chapter 4 of~\cite{HKPV}
for an introduction to determinantal processes.

\begin{proposition}
\label{patternBernoulli} Let $\xi $ be a symmetric determinantal point
field. Let $X$ be a family of disjoint patterns. The number of points
observed in $A\cap (\cup _{\mathbf{x}\in X}\{\mathbf{x}\})$, where $A\subset
\Omega $ is any finite subset, is a sum of independent Bernoullis.
\end{proposition}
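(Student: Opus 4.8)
The plan is to recognize the quantity in question as the count of the determinantal field $\xi$ on a finite set and to read off the Bernoulli structure directly from the generating-function identity \eqref{detBernoulli}. First I would set
\[
B \;=\; A\cap\Bigl(\bigcup_{\mathbf{x}\in X}\{\mathbf{x}\}\Bigr).
\]
Because $A$ is finite, $B$ is a finite subset of $\Omega$; this is the only place where the finiteness of $A$, rather than of $X$, is used, so the family $X$ may well be infinite. The number of points observed in $A\cap(\cup_{\mathbf{x}\in X}\{\mathbf{x}\})$ is then precisely $\xi(B)$, the number of points of the field lying in $B$, and the disjointness of the patterns plays no role here beyond making the pattern field well defined.

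Next I would specialize \eqref{detBernoulli} to this set, which gives
\[
\mathbb{E}\bigl(z^{\xi(B)}\bigr)=\det(zK_B+I-K_B)=\prod_{\lambda\in\mathrm{Spec}(K_B)}\bigl(z\lambda+(1-\lambda)\bigr).
\]
Each factor $z\lambda+(1-\lambda)$ is the probability generating function of a Bernoulli variable of parameter $\lambda$, so the right-hand side is the generating function of a sum of independent Bernoullis indexed by the spectrum of $K_B$. Since the generating function determines the law of an $\mathbb{N}$-valued random variable, this identifies $\xi(B)$ in distribution with such a sum, which is the assertion.

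The only substantive point — and the place where the symmetry hypothesis is genuinely used — is to verify that the parameters $\lambda$ are legitimate, that is, that $\mathrm{Spec}(K_B)\subset[0,1]$. I would argue by compression: since $K$ is self-dual with spectrum in $[0,1]$ we have $0\le K\le I$ as an operator on $\ell^2(\Omega)$, and for any $v\in\ell^2(B)$ extended by zero to $\Omega$ one has $\langle v,K_Bv\rangle=\langle v,Kv\rangle\in[0,\langle v,v\rangle]$, so the principal submatrix $K_B$ is again symmetric with spectrum in $[0,1]$. Thus every factor above is an honest Bernoulli generating function and no factor can produce an out-of-range or signed weight. I expect no further obstacle: once this positivity is in place the statement is an immediate specialization of \eqref{detBernoulli}.
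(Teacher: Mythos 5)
Your argument is correct and, at its core, coincides with the paper's: both reduce the claim to the generating-function identity \eqref{detBernoulli} for the count of a symmetric determinantal process on a finite set, and your extra step verifying that the compression $K_B$ of a symmetric kernel with spectrum in $[0,1]$ again has spectrum in $[0,1]$ is a worthwhile addition that the paper leaves implicit. The genuine divergence is the one you flag yourself: you read ``the number of points observed'' as the count $\xi(B)$ of the \emph{underlying} process in $B=A\cap(\cup_{\mathbf{x}\in X}\{\mathbf{x}\})$, under which the patterns and their disjointness are indeed irrelevant, whereas the paper's two-line proof routes through the decomposition $\{\mathbf{x}\}=\mathbf{x}_0\cup\mathbf{x}_1$ and the block formula \eqref{kpattern}, indicating that the authors have in mind a count in which sites of $\mathbf{x}_0$ contribute through the \emph{absence} of a particle. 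You should be aware that under that alternative reading the statement is actually false: for the projection kernel $K=\frac12\left(\begin{smallmatrix}1&1\\1&1\end{smallmatrix}\right)$ on $\Omega=\{1,2\}$, with the disjoint patterns $(\{1\},\emptyset)$ and $(\emptyset,\{2\})$, the count $1_{1\notin\mathcal{T}}+1_{2\in\mathcal{T}}$ takes only the values $0$ and $2$, each with probability $1/2$, and no sum of independent Bernoullis has that law. So the literal reading you adopt is the only one under which the proposition holds, and for that reading your proof is complete; what your route gives up is any contact with the pattern structure (and with \eqref{kpattern}), which is presumably why the proposition is phrased in terms of patterns in the first place.
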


\begin{proof}
Since the patterns are disjoint, for any finite set $A\subset \Omega $, we
have $\xi (A\cap (\cup _{\mathbf{x}\in X}\{\mathbf{x}\}))=\xi (A\cap (\cup _{%
\mathbf{x}}(\mathbf{x}_{1}^{c}\cup \mathbf{x}_{0})))$. The result follows by applying~\eqref{kpattern} to obtain~\eqref{detBernoulli}.
\end{proof}

For any finite set of patterns $A$, we have 
\begin{equation*}
\mathrm{Var}(\xi _{X}(A))=\sum_{\mathbf{x}\in A}\sum_{\mathbf{y}\in A}%
\mathrm{Cov}(\mathbf{x,y})\,.
\end{equation*}%
In the case when the patterns are singletons, $A$ is just a subset of $%
\Omega $, and 
\begin{align*}
\mathrm{Cov}(x,y)& =(K(x,x)(1-K(x,x))1_{\{x=y\}}-K(x,y)^{2}1_{\{x\neq y\}} \\
& =(\P (x)-K(x,x)^{2})1_{\{x=y\}}-K(x,y)^{2}1_{\{x\neq y\}}\,.
\end{align*}%
Thus the variance is given by 
\begin{equation*}
\mathrm{Var}(\xi (A))=\sum_{x\in A}\left( K\left( x,x\right) -\sum_{y\in
A}K\left( x,y\right) ^{2}\right) .
\end{equation*}

\subsection{{Fluctuations for block determinantal processes}\label{blockdet}}

Before studying the scaling limits of pattern fields, we show (following
ideas of Boutillier~\cite{Bo}) a fluctuation result for general block
determinantal fields (without assuming the kernels to be symmetric). In
combination with the covariance structure studied in the next section, this
implies that under the right decay of correlation, pattern fields of
determinantal processes converge to Gaussian white noise.

Let $d\geq 1$ and $D\subset \mathbb{R}^{d}$ be a domain. Let $\Omega $ be
the space of locally finite particle configurations in $D$. Denote $\mathcal{%
F}$ to be the $\sigma -$measurable subsets of $\Omega $, generated by the
cylinder sets $$\mathcal{C}_{B}^{n}=\left\{ \xi \in \Omega :\left\vert \text{%
particles of }\xi \text{ that lies in }B\right\vert =n\right\}\,,$$ where $%
B\subset D$ is a Borel set and $n\in \mathbb{N}$. $\left( \Omega ,\mathcal{F}%
,\mathbb{P}\right) $ is said to be a block determinantal process, if for any 
$k\in \mathbb{N}$ and $\left\{ x_{i}\right\} _{i=1}^{k}\subset D$,%
\begin{equation*}
\mathbb{P}\left( \#\left( dx_{i}\right) =1,i=1,...,k\right) =\rho _{k}\left(
x_{1},...,x_{k}\right) dx_{1}...dx_{k},
\end{equation*}%
where the $k$ point correlation function 
\begin{equation}
\rho _{k}\left( x_{1},...,x_{k}\right) =\det \left( A_{x_{i}x_{j}}\right)
_{i,j=1}^{k}\text{,}  \label{kcorr}
\end{equation}%
and each $A_{x_{i}x_{j}}$ is a $m_{i}\times m_{j}$ matrix. We will abuse the
notation below and let $\mathbb{P}\left( x_{1},...,x_{k}\right) $ denote $%
\rho _{k}$.

\begin{proposition}
\label{blockclt} Let $\left( \Omega ,\mathcal{F},\mathbb{P}^{n}\right) _{n>0}
$ be a family of block determinantal processes. Assume that $\det
A_{x_{i}x_{i}}^{n}$ is bounded from below by a strictly positive constant, and $%
A_{x_{i}x_{j}}^{n}$ converges as $n\rightarrow \infty $ to a limit $%
A_{x_{i}x_{j}}$ satisfying $\left\Vert A_{x_{i}x_{j}}\right\Vert \leq
O\left( |x_{i}-x_{j}|^{-d}\right) $ as $\left\vert x_{i}-x_{j}\right\vert
\rightarrow \infty $. Then a rescaling-recentering of the associated point
field $\xi _{n}$ converges weakly in distribution to a Gaussian field: for
any test function $\varphi \in C_{0}^{\infty }(D)$, we have 
\begin{equation*}
\frac{\xi _{n}(\varphi )-\E(\xi _{n}(\varphi ))}{\var(\xi _{n}(\varphi
))^{1/2}}\rightarrow \mathcal{N}\left( 0,1\right) \,.
\end{equation*}
\end{proposition}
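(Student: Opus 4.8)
The plan is to prove this central limit theorem by the method of cumulants: after centering and normalizing by the standard deviation, I will show that every cumulant of order $m\ge 3$ of the linear statistic $\xi_n(\varphi)$ vanishes in the limit, while the first two converge to those of a standard Gaussian. Since $\mathcal N(0,1)$ is determined by its moments, convergence of all cumulants to $(0,1,0,0,\dots)$ forces convergence in distribution to $\mathcal N(0,1)$. The block determinantal structure is exactly what renders the cumulants explicitly computable, following the approach of Boutillier~\cite{Bo}.

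First I would record the trace formula for the cumulants. Writing $M_\varphi$ for the operator of multiplication by $\varphi$ and expanding the logarithm of the generating functional $\E(z^{\xi_n(\varphi)})$ as in~\eqref{detBernoulli}, the $m$-th cumulant of $\xi_n(\varphi)$ is a finite linear combination, over compositions $m_1+\dots+m_j=m$ with $j\le m$, of cyclic traces $\Tr\bigl[A^n M_{\varphi^{m_1}}\cdots A^n M_{\varphi^{m_j}}\bigr]$, in which $j$ factors of the matrix kernel $A^n$ alternate with diagonal multiplication operators. In the block setting each such trace unfolds as $\sum_{x_1,\dots,x_j}\varphi(x_1)^{m_1}\cdots\varphi(x_j)^{m_j}\,\mathrm{tr}\bigl[A^n_{x_1x_2}A^n_{x_2x_3}\cdots A^n_{x_jx_1}\bigr]$, a sum of traces of products of the local blocks. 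A preliminary technical point is that, since $\varphi$ has compact support and the diagonal blocks are bounded while the off-diagonal blocks decay, the relevant operators are trace class, so the expansion is legitimate.

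The core of the argument is then a pair of size estimates, governed by the number $N_n\asymp n^d$ of points. For the variance ($m=2$) the $j=1$ term $\Tr[A^n M_{\varphi^2}]=\sum_x\varphi(x)^2\,\mathrm{tr}[A^n_{xx}]$ is an extensive diagonal sum, while the $j=2$ term $\sum_{x,y}\varphi(x)\varphi(y)\,\mathrm{tr}[A^n_{xy}A^n_{yx}]$ is controlled, under $\|A^n_{xy}\|=O(|x-y|^{-d})$, by $\sum_{x,y}|\varphi(x)\varphi(y)|\,|x-y|^{-2d}=O(N_n)$, the inner sum over $y$ being finite because $2d>d$. The hypothesis that $\det A^n_{xx}$ is bounded below keeps the one-point intensity, and thus the Bernoulli-type per-point variances, away from zero, so that $V_n:=\var(\xi_n(\varphi))\asymp n^d$ and the normalization is non-degenerate. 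For $m\ge 3$ I would show that every cyclic trace is still only of order $N_n$: bounding $|\mathrm{tr}[A^n_{x_1x_2}\cdots A^n_{x_jx_1}]|\le C^{j}\prod_i|x_i-x_{i+1}|^{-d}$ and fixing $x_1$, the closed loop of $j\ge 3$ factors of $|\cdot|^{-d}$ sums over $x_2,\dots,x_j$ to a quantity $O(1)$ uniformly in $x_1$ by an iterated-convolution estimate, leaving one extensive sum over $x_1$. Hence $C_m(\xi_n(\varphi))=O(n^d)$ for every fixed $m$, so the normalized cumulant of order $m\ge 3$ is $C_m/V_n^{m/2}=O\!\bigl(n^{d(1-m/2)}\bigr)\to 0$, while the first is killed by centering and the second equals $1$ by construction.

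The main obstacle will be the borderline nature of the decay exponent: with $\|A^n_{xy}\|=O(|x-y|^{-d})$ the single-factor sum $\sum_y|x-y|^{-d}$ already diverges, so the estimates must be organized so that at least two factors are always summed jointly (using convergence of $\sum_y|x-y|^{-2d}$) and the cyclic loops of length $\ge 3$ are bounded via an iterated-convolution argument that is uniform in the base point, with the logarithmic corrections from convolving $|\cdot|^{-d}$ with itself absorbed because $2d>d$. Keeping these bounds uniform as $A^n\to A$, and carrying the non-symmetric block matrices through the trace estimates without recourse to $A_{yx}=A_{xy}^{*}$ (using only the operator-norm bound $\|A^n_{xy}A^n_{yx}\|\le\|A^n_{xy}\|\,\|A^n_{yx}\|$), is where the work concentrates.
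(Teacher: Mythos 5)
Your overall strategy --- kill all normalized cumulants of order $m\ge 3$ using the $|x-y|^{-d}$ decay and a borderline iterated-convolution estimate, with the variance kept of order $n^{d}$ by the lower bound on $\det A^n_{xx}$ --- is the same engine that drives the paper's proof, which verifies Wick's formula for the centered moments rather than computing cumulants. The cyclic sums $\sum\Vert B_{z_1z_2}\Vert\cdots\Vert B_{z_mz_1}\Vert$ and the way you propose to control them (a H\"older/AM--GM trick ensuring at least two decaying factors are summed jointly, since $\sum_y|x-y|^{-d}$ alone diverges while $\sum_y|x-y|^{-ds}$ converges for $s>1$) match the paper's estimate almost verbatim.

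The gap is at the very first step: the trace formula for the cumulants. For a \emph{block} determinantal process the generating functional $\E(z^{\xi_n(\varphi)})$ is \emph{not} a Fredholm determinant of the block kernel, so the identity ``$m$-th cumulant $=$ linear combination of $\Tr[A^nM_{\varphi^{m_1}}\cdots A^nM_{\varphi^{m_j}}]$'' cannot be read off from \eqref{detBernoulli}. That formula applies to the underlying scalar determinantal process; the point process whose correlation functions are $\det(A_{x_ix_j})$ with matrix blocks lives on the set of blocks, and its generating functional expands over subsets of \emph{whole blocks}, whereas a Fredholm determinant of the block operator expands over subsets of \emph{individual indices inside} the blocks --- these do not agree (the paper makes exactly this point: the pattern field is not determinantal even though the underlying process is). If you instead derive the joint cumulants of the occupation numbers directly from the correlation functions by M\"obius inversion and expansion of the block determinants over permutations, you do get cyclic structures, but you also get contributions from permutations that visit a single block more than once; your expression $\mathrm{tr}[A^n_{x_1x_2}\cdots A^n_{x_jx_1}]$ retains only the single-visit terms. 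Those extra terms are precisely what the paper isolates and bounds by $O(n^{-2d})$ in its inclusion-exclusion computation of $\E_n[\prod_i(1_{z_i}-\P^n(z_i))]$ and its restriction to permutations whose support meets each block exactly once. So the conclusion and the analytic estimates are right, but you must either prove the cumulant expansion for block processes from scratch, exhibiting and controlling the multi-visit-per-block corrections, or switch to the paper's route of verifying Wick's formula for the centered moments, where this bookkeeping is carried out explicitly.
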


We will prove the proposition by verifying Wick's theorem for the counting
field, and shows the finite dimensional distributions converge to that of a
Gaussian field (see e.g.~\cite{Janson}). The Wick's theorem states if $\xi $
is a random field such that all of its moments can be expressed in terms of
the covariance structure as follows: for any smooth test functions $\varphi
_{1},...,\varphi _{n}$,%
\begin{equation}
\mathbb{E}\left( \xi \left( \varphi _{1}\right) ...\xi \left( \varphi
_{n}\right) \right) =\left\{ 
\begin{array}{cc}
\sum_{\text{pairings}}\prod\limits_{i=1}^{n/2}\mathbb{E}\left( \xi \left(
\varphi _{i}\right) \xi \left( \varphi _{\sigma \left( i\right) }\right)
\right)  & n\text{ even,} \\ 
0 & n\text{ odd,}%
\end{array}%
\right.   \label{wick}
\end{equation}%
then $\xi $ is Gaussian. In practice, by polarization identities it suffices
to check the above identity for $\mathbb{E}\left( \xi \left( \varphi \right)
^{n}\right) $.

The basic idea is that when computing higher moments, all the contributions
not coming from pair correlations vanish, since the correlations decay fast
enough. The argument follows the spirit of~\cite{Bo}.

\begin{proof}
We first verify the Wick formula in the case where the contribution points
are macroscopically distant. Let 
\begin{equation*}
\Xi _{k}^{n}\left( \varphi \right) =\mathrm{Var}\left( \xi _{n}\left(
\varphi \right) \right) ^{-k/2}\sum_{\substack{ z_{1},...,z_{k} \\ \text{%
distinct}}}\varphi \left( z_{1}\right) ...\varphi \left( z_{k}\right) 
\mathbb{E}_{n}\left( \prod\limits_{i=1}^{k}\left( 1_{z_{i}}-\mathbb{P}%
^{n}\left( z_{i}\right) \right) \right) .
\end{equation*}

Note the following exclusion-inclusion formula 
\begin{align*}
&\mathbb{E}_{n}\left[ \left( 1_{z_{1}}-\mathbb{P}^{n}\left( z_{1}\right)
\right) ...\left( 1_{z_{k}}-\mathbb{P}^{n}\left( z_{k}\right) \right) \right]
\\
&=\sum_{i_{1},...,i_{p}}\prod\limits_{i\notin \left\{
i_{1},...,i_{p}\right\} }\left( -\mathbb{P}^{n}\left( z_{i}\right) \right) 
\mathbb{E}_{n}\left[ \prod\limits_{l=1}^{p}1_{z_{i_{l}}}\right]  \\
&=\prod\limits_{i=1}^{k}\mathbb{P}^{n}\left( z_{i}\right) \sum_{C\subset
\left\{ 1,...,k\right\} }\left( -1\right) ^{k-\left\vert C\right\vert }\det
\left( 1_{\left\{ i,j\right\} \in C}B_{z_{i}z_{j}}^{n}\right) _{i,j=1}^{k}\,,
\end{align*}%
where $B_{z_{i}z_{j}}^{n}=\left( \mathbb{P}^{n}\left( z_{i}\right) \right)
^{-1}A_{z_{i}z_{j}}^{n}$.

Now expand the determinant, and notice that the off-diagonal entries of the
matrix $\left\{ 1_{\left\{ i,j\right\} \in C}B_{z_{i}z_{j}}^{n}\right\} $
are small (bounded by $O\left( n^{-d}\right) $). One can rewrite the above
expression in terms of sums over permutations fixing no block $%
B_{z_{i}z_{i}}^{n}$ (or equivalently, as products of cycles), and the
support of which intersect each block exactly once. We thus obtain%
\begin{align*}
&\mathbb{E}_{n}\left[ \left( 1_{z_{1}}-\mathbb{P}^{n}\left( z_{1}\right)
\right) ...\left( 1_{z_{k}}-\mathbb{P}^{n}\left( z_{k}\right) \right) \right]
\\
&=\sum_{C\subset \left\{ 1,...,k\right\} }\left( -1\right) ^{k-\left\vert
C\right\vert }\prod\limits_{i=1}^{k}\mathbb{P}^{n}\left( z_{i}\right)
\sum_{S \text{ fixing no block}}\text{sgn}\left( S \right)
\prod\limits_{i=1}^{k}\prod\limits_{\alpha =1}^{m_{i}}B_{\left( z_{i},\alpha
\right) ,S \left( z_{i},\alpha \right) }^{n} \\
&=\sum_{C\subset \left\{ 1,...,k\right\} }\left( -1\right) ^{k-\left\vert
C\right\vert }\prod\limits_{i=1}^{k}\mathbb{P}^{n}\left( z_{i}\right) \sum
_{\substack{ S\in S_{k} \\ \text{fix no point}}}\text{sgn}\left( S\right)
\left( \prod\limits_{i=1}^{k}\sum_{\alpha _{i}=1}^{m_{i}}B_{\left(
z_{i},\alpha _{i}\right) ,\left( z_{S\left( i\right) },\alpha _{S\left(
i\right) }\right) }^{n}\right)  \\
&+O\left( n^{-2d}\right) \,,
\end{align*}%
where the $O\left( n^{-2d}\right) $ term accounts for the contribution from
permutations that has at least two non-fixed point in some block. Indeed,
for a particular $S$ that partitions $\left\{ 1,...,k\right\} $ into cycles $%
\left\{ \gamma _{l}\right\} $, one can write%
\begin{equation*}
\prod\limits_{i=1}^{k}\sum_{\alpha _{i}=1}^{m_{i}}B_{\left( z_{i},\alpha
_{i}\right) ,\left( z_{S\left( i\right) },\alpha _{S\left( i\right) }\right)
}^{n}=\prod\limits_{\left\vert \gamma_{l} \right\vert =p}\text{Tr}\left(
B_{z_{1}z_{2}}^{n}...B_{z_{p}z_{1}}^{n}\right) .
\end{equation*}

Let $\left\{ \Gamma _{l}\right\} $ denote some partition of $\left\{
1,...,k\right\} $. Therefore, 
\begin{align*}
\Xi _{n}^{\varepsilon }\left( \varphi \right)&=\sum_{C\subset \left\{
1,...,k\right\} }\left( -1\right) ^{k-\left\vert C\right\vert
}\prod\limits_{i=1}^{k}\mathbb{P}^{n}\left( z_{i}\right) \sum_{\left\{
\Gamma _{l}\right\} }\prod\limits_{l}\sum_{\substack{ \gamma \text{: }%
\left\vert \gamma \right\vert =m \\ \text{supp}\gamma =\Gamma _{l}}}\text{sgn%
}\left( \gamma \right) \left( \mathrm{Var}\left( \xi _{n}\left( \varphi
\right) \right) \right) ^{-m/2} \\
&\cdot \left( \sum_{\substack{ z_{1},...,z_{m} \\ \text{distinct}}}\varphi
\left( z_{1}\right) ...\varphi \left( z_{m}\right) \text{Tr}\left(
B_{z_{1}z_{2}}^{n}...B_{z_{m}z_{1}}^{n}\right) +O\left( n^{-2d}\right)
\right)\,.
\end{align*}%
It suffices to show all the contributions from a cycle of length $m\geq 3$
vanish. It is easy to verify that%
\begin{equation*}
\var\left( \xi _{n}(\varphi )\right) =\sum_{z,z^{^{\prime }}}\varphi \left(
z\right) \varphi \left( z^{\prime }\right) \P _{n}\left( z,z^{\prime
}\right) -\left( \sum_{z}\varphi \left( z\right) \P _{n}\left( z\right)
\right) ^{2}=O\left( n^{2d}\right) \,.
\end{equation*}%
Therefore each term arising from a cycle of length $m\geq 3$ is bounded by 
\begin{align}
&Mn^{-m}\sum_{\substack{ z_{1},...,z_{m} \\ \text{distinct}}}\text{Tr}%
\left( B_{z_{1}z_{2}}^{n}...B_{z_{m}z_{1}}^{n}\right)   \notag \\
&\leq MC_{m}n^{-m}\sum_{\substack{ z_{1},...,z_{m} \\ \text{distinct%
}}}\left\Vert B_{z_{1}z_{2}}^{n}\right\Vert ...\left\Vert
B_{z_{m}z_{1}}^{n}\right\Vert ,  \label{cycle}
\end{align}%
for some $M<\infty $. Since $\mathbb{P}_{n}\left( z_{i}\right) =O\left(
1\right) $, the decay rate of $B_{z_{i}z_{j}}$ is the same as~$A_{z_{i}z_{j}}
$, which implies that $\sum_{z_{2}\neq z_{1}}\left\Vert
B_{z_{1}z_{2}}^{n }\right\Vert ^{s}<\infty $ for any $s>1$. Apply
the following elementary inequality to (\ref{cycle}),%
\begin{equation*}
\prod\limits_{i=1}^{m}a_{i}\leq \frac{1}{m}\left[ \left(
a_{1}...a_{m-1}\right) ^{\frac{m}{m-1}}+...+\left(
a_{m}a_{1}...a_{m-2}\right) ^{\frac{m}{m-1}}\right] ,
\end{equation*}%
and note that each term can be bounded by $O\left( n^{2-m}\right) $. For
instance, the first corresponding term satisfies%
\begin{equation*}
C\sum_{\substack{ z_{1},...,z_{m} \\ \text{distinct}}}\left( \left\Vert
B_{z_{1}z_{2}}^{n}\right\Vert ...\left\Vert B_{z_{m}z_{1}}^{n}\right\Vert
\right) ^{\frac{m}{m-1}}<\infty \,,
\end{equation*}%
by successively summing over $z_{m},z_{m-1},...,z_{2}$. Therefore (\ref{cycle})
can be bounded by $O\left( n^{2-m}\right) $, which vanishes in the limit for $m\geq 3$.

All the terms contributing to the limit are thus coming from pairings, and
we verified Wick's formula~\eqref{wick}.

When the $z_{i}$'s are not necessarily all distinct, we can verify the Wick
formula for higher moments by the same argument as in Section 3.2.2 in~\cite%
{Bo}, and we omit the computation here.
\end{proof}

\subsection{Central limit theorem for pattern fields}

Suppose that $\Omega$ is infinite and embedded in ${\mathbb{R}}^d$ in a
discrete way (no accumulation points). Let $D\subset{\mathbb{R}}^d$ be a
simply connected domain containing~$0$. Define $\Omega_{n}=D\cap \Omega/n$:
these sets (when remultiplied by $n$) form an exhausting sequence of subsets
of $\Omega$. We suppose that each $\Omega_n$ is equipped with a
determinantal process with kernel $K_{n}$. We assume that $K_{n}\rightarrow K
$ pointwise. This implies that the point process on $\Omega$ is the weak
limit of the process defined on $\Omega _{n}$. As we will see, this implies
that scaling limits of fields constructed from local events do not feel the
influence of the boundary shape of $D$.

For each $n$, let $\mathcal{P}_n$ be a collection of patterns in $\Omega_n$.
When the choice of~$\mathcal{P}_n$ is fixed, we write~$\xi_n$ instead of~$%
\xi_{\mathcal{P}_n}$.

\begin{theorem}
\label{cltdet} 
Assume that $K(x,y)=O(|x-y|^{-d})$. If for any $n$ and $\mathbf{x}\in \mathcal{P}_{n}$, $\mathbb{P}_{n}\left( 
\mathbf{x}\right) $ is uniformly bounded from below, then as $\left\vert
A_{n}\right\vert \rightarrow \infty $, $(\xi_{n}(A_{n})-\mathbb{E}(\xi _{n}(A_{n})))/\mathrm{%
Var}(\xi_{n}(A_{n}))^{1/2}$ converges in law to a standard Gaussian.
\end{theorem}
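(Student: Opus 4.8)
The plan is to deduce Theorem~\ref{cltdet} from the general block-determinantal central limit theorem of Proposition~\ref{blockclt}. The key observation is that a pattern field is exactly the block-determinantal structure studied there: by~\eqref{kpattern}, the joint probability of a finite family of disjoint patterns is a minor of the matrix $-\widehat{I}+K$, and grouping the entries of each pattern $\mathbf{x}\in\mathcal{P}_n$ into a single block yields correlation functions of the form~\eqref{kcorr}, with the $m_i\times m_j$ block $A_{x_ix_j}^n$ built from the restriction of $-\widehat{I}+K_n$ to the supports $\{\mathbf{x}^{(i)}\}$ and $\{\mathbf{x}^{(j)}\}$. First I would make this identification precise: verify that the law of the pattern field $\xi_n$ on $\mathcal{P}_n$ is block-determinantal in the sense of Section~\ref{blockdet}, reading off the blocks $A_{x_ix_j}^n$ explicitly from~\eqref{kpattern}.

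Once the block structure is in place, the proof reduces to checking the two hypotheses of Proposition~\ref{blockclt}. For the diagonal blocks, $\det A_{x_ix_i}^n=\mathbb{P}_n(\mathbf{x}^{(i)})$ is precisely the single-pattern probability, which by assumption is uniformly bounded below by a positive constant; this gives the lower bound on $\det A_{x_ix_i}^n$ directly. For the off-diagonal decay, I would use the standing hypothesis $K(x,y)=O(|x-y|^{-d})$: since each entry of the off-diagonal block $A_{x_ix_j}^n$ is (up to the local $\widehat{I}$ correction, which only affects diagonal blocks) an entry of $K_n$ evaluated at points of $\{\mathbf{x}^{(i)}\}\times\{\mathbf{x}^{(j)}\}$, and the patterns have bounded (finite) support, the operator norm $\|A_{x_ix_j}^n\|$ inherits the bound $O(|x_i-x_j|^{-d})$ as $|x_i-x_j|\to\infty$. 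Here I would fix a reference point inside each pattern support and use that the support diameters are uniformly bounded, so that distances between support points and between the reference points are comparable.

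With both hypotheses verified, Proposition~\ref{blockclt} applies to the family $(\Omega,\mathcal{F},\mathbb{P}^n)$ of block-determinantal processes and yields, for any test function $\varphi\in C_0^\infty(D)$, that the normalized field $(\xi_n(\varphi)-\mathbb{E}(\xi_n(\varphi)))/\mathrm{Var}(\xi_n(\varphi))^{1/2}$ converges to a standard Gaussian. To pass from smooth test functions to the indicator statistics $\xi_n(A_n)$ in the statement, I would approximate $1_{A_n}$ by smooth functions and control the discrepancy using the variance asymptotics, or alternatively note that the Wick-formula argument underlying Proposition~\ref{blockclt} applies verbatim to $\varphi=1_{A_n}$ since the cycle-bound estimate~\eqref{cycle} used only the $O(|x-y|^{-d})$ decay and the lower bound on single-point probabilities, both of which hold here.

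The main obstacle I anticipate is the transition from the fixed smooth test function $\varphi$ in Proposition~\ref{blockclt} to the growing indicator sets $A_n$ with $|A_n|\to\infty$ in the theorem. The cleanest route is to rerun the moment computation of Proposition~\ref{blockclt} directly with $\varphi=1_{A_n}$, checking that the variance $\mathrm{Var}(\xi_n(A_n))$ grows like the number of contributing pattern sites (so that the length-$m\ge 3$ cycle contributions are suppressed by the same $O(n^{2-m})$-type bound relative to this growing variance). Verifying that this normalization genuinely kills the higher cycles when $|A_n|\to\infty$—rather than for a fixed macroscopic region—is the delicate point, and it is exactly where the uniform lower bound on $\mathbb{P}_n(\mathbf{x})$ enters, since it guarantees that the variance does not degenerate and that the blocks $B_{z_iz_j}^n=(\mathbb{P}_n(z_i))^{-1}A_{z_iz_j}^n$ remain well-defined with the stated summability.
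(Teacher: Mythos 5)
Your proposal follows essentially the same route as the paper's own (very terse) proof: identify the pattern field as a block determinantal process via~\eqref{kpattern}, check that the diagonal block determinants are the single-pattern probabilities (bounded below by hypothesis) and that the off-diagonal blocks inherit the $O(|x-y|^{-d})$ decay from $K$ by pointwise convergence of $K_n$, and then invoke Proposition~\ref{blockclt}. Your additional care about passing from smooth test functions to the indicators $1_{A_n}$ is a point the paper silently elides, and your suggested fix (rerunning the Wick-formula moment computation with $\varphi=1_{A_n}$, using the variance growth guaranteed by the lower bound on $\mathbb{P}_n(\mathbf{x})$) is the right way to close it.
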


In the case where the patterns are singletons, the only condition we require
is that $\mathrm{Var}(\xi_{n}(A_{n}))\to\infty$, and the result follows from
the Lindenberg--Feller theorem as explained in Theorem 4.6.1 of~\cite{HKPV}.

\begin{proof}
By pointwise convergence of $K_n$ to $K$, we have $K_{n}\left( x_{i},x_{j}\right)=O(n^{-d}\left\vert x_{i}-x_{j}\right\vert ^{-d})$ for any $x_{i},x_{j}\in \Omega _{n}$.
The result now follows from Proposition~\ref{blockclt} since the pattern field is block determinantal.
~\end{proof}

It follows from Theorem~\ref{cltdet} that the rescaled-recentered field
constructed from $\xi_n$ converges to a Gaussian field, that can be viewed
as a random element of $L^{2}(D)$. By the lower bound on $\P (x)$, the
growth rate of $\mathrm{Var}(\xi_n)$ is $|\mathcal{P}_n|$ (which we will
take to be of the order of $n^{d}$) and compute the covariance structure of
the Gaussian random field obtained as the limit of $n^{-d/2}(\xi_n-\mathbb{E}%
\xi_n)$.

To characterize the law of a Gaussian field, it suffices to compute $\mathbb{%
E}(\xi (\varphi )^{2})$ for any test function $\varphi $ (see~\cite{Janson}%
). If 
\begin{equation*}
\mathbb{E}(\xi (\varphi )^{2})=\iint \varphi \left( x\right) C\left(
x,y\right) \varphi \left( y\right) dx\,dy\,,
\end{equation*}%
then $\xi $ is a Gaussian field with correlation kernel $C$. When $C\left(
x,y\right) =I \delta(x-y)$, where $\delta$ is the Dirac mass and $I>0$, the
field is \emph{Gaussian white noise}, $\mathbb{E}(\xi (\varphi )^{2})$ is
proportional to~$|\varphi|_{L^{2}}^{2}$ and $I$ is called the \emph{intensity%
}.

Let ${\mathcal{P}}_{n}$ be an increasing sequence of finite collections of
disjoint patterns on $\Omega _{n}$, such that $\left\vert {\mathcal{P}}%
_{n}\right\vert =O\left( n^{d}\right) $. Denote $\mathcal{P}=\cup_{n}%
\mathcal{P}_n$. We consider the associated random fields $\xi _{n}$.

Assume $\Omega$ to be periodic (as an embedded set), that is, suppose it has
a finite number of orbits under the action of a rank $d$ group of
translations $\Lambda\cong{\mathbb{Z}}^d$. We also suppose the collection of
patterns ${\mathcal{P}}$ itself to be invariant under $\Lambda $, that is,
it can be written as the disjoint union of the translates of a finite
collection of patterns $\mathcal{P}^{0}={\mathcal{P}}/\Lambda $.

\begin{theorem}
\label{patterncv} 
Let $\Omega$ be a periodic subset of $\R^d$ as above. Let $D\subset\R^d$ be a simply connected domain containing $0$ and define $\Omega_n=D\cap\Omega/n$. We suppose that on each of these is defined a determinantal process with kernel $K_n$ satisfying the conditions in Theorem~\ref{cltdet}. Then, for any periodic set of patterns generated by a pattern $\p^0$, the corresponding field $\xi_n$ satisfies that $n^{-d/2}(\xi _{n}-\mathbb{E}(\xi _{n}))$
converges weakly to a Gaussian white noise with intensity 
\begin{equation}
I(\mathcal{P}^{0})=\frac{1}{|\mathcal{P}^{0}|}\sum_{\mathbf{x}\in \mathcal{P}%
^{0}}\sum_{\mathbf{y}\in {\mathcal{P}}}\mathrm{Cov}(\mathbf{x,y})\,.
\label{intensity}
\end{equation}
\end{theorem}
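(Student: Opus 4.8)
The plan is to verify the two defining properties of Gaussian white noise for the limiting field: first, that the limit is Gaussian, and second, that its covariance kernel is a multiple of the Dirac mass. The Gaussianity is already supplied by Theorem~\ref{cltdet} (via Proposition~\ref{blockclt}), since the periodic kernel $K_n$ inherits the decay hypothesis $K(x,y)=O(|x-y|^{-d})$ and the uniform lower bound on $\P_n(\mathbf{x})$, so the rescaled-recentered field converges in law to \emph{some} Gaussian field. The real content of the theorem is therefore the explicit identification of the covariance, and in particular that it concentrates on the diagonal.

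First I would compute $\E(\xi(\varphi)^2)$ for a fixed test function $\varphi\in C_0^\infty(D)$ using the variance formula recorded just before the statement, namely $\var(\xi_X(A))=\sum_{\mathbf{x}}\sum_{\mathbf{y}}\cov(\mathbf{x},\mathbf{y})$, smeared against $\varphi$. Writing this out for the rescaled field $n^{-d/2}(\xi_n-\E\xi_n)$, one gets
\begin{equation*}
\E\bigl((n^{-d/2}(\xi_n-\E\xi_n))(\varphi)^2\bigr)=n^{-d}\sum_{\mathbf{x}\in\mathcal{P}_n}\sum_{\mathbf{y}\in\mathcal{P}_n}\varphi(\mathbf{x})\varphi(\mathbf{y})\cov(\mathbf{x},\mathbf{y})\,,
\end{equation*}
where I identify each pattern with its support location and $\varphi(\mathbf{x})$ means $\varphi$ evaluated at (a representative point of) $\mathbf{x}$. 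The decay hypothesis $K(x,y)=O(|x-y|^{-d})$ forces $\cov(\mathbf{x},\mathbf{y})=O(|x-y|^{-2d})$ off the diagonal, since the covariance of patterns is built from products of entries $K(x,y)^2$. The key summability point is that $\sum_{\mathbf{y}\neq\mathbf{x}}|\cov(\mathbf{x},\mathbf{y})|$ converges (as $\sum_{r\geq 1} r^{d-1}r^{-2d}=\sum r^{-d-1}<\infty$), so the off-diagonal contributions are localized near $\mathbf{x}$ on the microscopic scale.

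Next, the crucial step is to exploit the periodicity. Because $\Omega$ and $\mathcal{P}$ are $\Lambda$-invariant and $K$ is a limit of translation-invariant kernels, the inner sum $\sum_{\mathbf{y}\in\mathcal{P}}\cov(\mathbf{x},\mathbf{y})$ depends only on the orbit of $\mathbf{x}$ under $\Lambda$, so averaging over the fundamental domain $\mathcal{P}^0=\mathcal{P}/\Lambda$ produces the constant $I(\mathcal{P}^0)$ from~\eqref{intensity}. Since the covariance is concentrated within $O(1)$ microscopic distance of the diagonal while $\varphi$ varies only on the macroscopic ($n$) scale, one has $\varphi(\mathbf{y})\approx\varphi(\mathbf{x})$ for all $\mathbf{y}$ contributing nontrivially; replacing $\varphi(\mathbf{y})$ by $\varphi(\mathbf{x})$ costs a negligible amount by uniform continuity and the summability just established. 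The double sum then collapses to a single sum $n^{-d}\sum_{\mathbf{x}\in\mathcal{P}_n}\varphi(\mathbf{x})^2\cdot I(\mathcal{P}^0)$, which is a Riemann sum for $I(\mathcal{P}^0)\int_D\varphi(x)^2\,dx$ because $|\mathcal{P}_n|=O(n^d)$ and the patterns equidistribute in $D$ by periodicity. This yields $\E(\xi(\varphi)^2)=I(\mathcal{P}^0)\|\varphi\|_{L^2}^2$, i.e. the covariance kernel is $I(\mathcal{P}^0)\,\delta(x-y)$, which is exactly the white-noise characterization recorded before the statement.

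The main obstacle I expect is the interchange of limit and summation in passing from the double lattice sum to the Riemann integral, specifically controlling the near-diagonal terms uniformly. One must justify that the truncation of $\varphi(\mathbf{y})$ to $\varphi(\mathbf{x})$ and the passage to the limiting covariance $\cov$ (computed from $K$ rather than $K_n$) are both uniform in $n$; this requires a dominated-convergence argument resting on the absolute summability $\sum_{\mathbf{y}}|\cov(\mathbf{x},\mathbf{y})|<\infty$ together with the pointwise convergence $K_n\to K$ from the hypotheses of Theorem~\ref{cltdet}. A secondary subtlety is that the white-noise intensity must come out nonnegative and finite; nonnegativity is automatic since $\E(\xi(\varphi)^2)\ge 0$ for all $\varphi$, and finiteness follows from the same summability bound, so the formula~\eqref{intensity} is well-defined.
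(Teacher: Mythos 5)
Your proposal is correct and follows essentially the same route as the paper: Gaussianity is delegated to Theorem~\ref{cltdet}, the covariance of two patterns is bounded by $O(n^{-2d}\mathrm{dist}(\mathbf{x},\mathbf{y})^{-2d})$ because it is an alternating sum of products containing at least two off-diagonal entries of $K$, and the variance sum is split into a negligible far-field part, a negligible near-field part where $\varphi(\mathbf{y})-\varphi(\mathbf{x})$ is small, and a main diagonal part that converges by periodicity to $I(\mathcal{P}^0)\int_D\varphi^2$. The paper makes the splitting explicit with an intermediate scale $\delta=\varepsilon^{1/3}$ satisfying $\delta\to 0$ and $\delta/\varepsilon\to\infty$, which is exactly the uniformity device you flag as the main obstacle.
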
%
\begin{proof}
As explained above, we already know that the limit exists and is Gaussian. Let us identify the covariance structure. In the following, we make the abuse of notation $\varphi(\x)$ to denote the value of $\varphi(z)$ where $z$ is the location (center of mass) of the support of the pattern.

Given any test function $\varphi\in C_0^\infty(D)$, we have
\begin{equation}
\frac{1}{\left\vert {\mathcal{P}}_{n}\right\vert }\mathrm{Var}(\xi
_{n}(\varphi))=\frac{1}{\left\vert {\mathcal{P}}_{n}\right\vert}\sum_{%
\mathbf{x}\in {\mathcal{P}}_{n}}\sum_{\mathbf{y}\in {\mathcal{P}}%
_{n}}\mathrm{Cov}_{n}(%
\mathbf{x,y})\varphi(\x)\varphi(\y)\,.  \label{cov}
\end{equation} 
Consider a small $\delta>0$ (and write $\varepsilon$ for $n^{-d/2}$). We split the previous sum into three contributions as follows
\begin{eqnarray*}
&=&\varepsilon^2\sum_{\x\in \P_n}\varphi(\x)\Bigl(\sum_{\vert \x-\y\vert>\delta}\varphi(\y)\cov_n\left(\x,\y\right)+\sum_{\vert \x-\y\vert\leq\delta}\left(\varphi(\y)-\varphi(\x)\right)\cov_n\left(\x,\y\right)\\
&+&\sum_{\vert \x-\y\vert\leq\delta}\varphi(\x)\cov_n\left(\x,\y\right)\Bigr)\,.
\end{eqnarray*}
We will show that the main contribution comes from the third term.

By the assumptions of Theorem~\ref{cltdet}, we know that there is a matrix $K$ such that for $\x\neq\mathbf{y}$, we have
$$\mathrm{Cov}_{n}\left( \mathbf{x,y}\right) =\det K_{\{\mathbf{x}\}\cup \{\mathbf{y}\}}-\det K_{%
\{\mathbf{x}\}}\det K_{\{\mathbf{y}\}}\,,$$ which is a finite alternating sum of
products that contain at least two off-diagonal block terms. 
Every such
term is bounded by $$O\left( n^{-d}\mathrm{dist}\left( \mathbf{x,y}\right)
^{-d}\right) \,,$$ where $\mathrm{dist}\left( \mathbf{x,y}\right) =\inf_{x\in 
\mathbf{x,}y\in \mathbf{y}}\left\Vert x-y\right\Vert $. Therefore $\cov_{n}\left( \mathbf{x,y}\right) \leq Cn^{-2d}\mathrm{dist}\left( \mathbf{x,y%
}\right) ^{-2d}$. 

The first term therefore yields a contribution of $O\left(\varepsilon^2\varepsilon^{-4}\varepsilon^4/\delta^4\right)=O\left(\varepsilon^2/\delta^4\right)$.
Due to the smoothness of $\varphi$, the second term yields a contribution of 
$$O\left(\varepsilon^2 \delta\sum_{\x,\y\,\vert \x-\y\vert\leq \delta}\cov_n(\x,\y)\right)=O\left(\delta\right)\,,$$
since the sum of covariances is absolutely convergent.

The third term yields a contribution of 
\begin{equation*}\label{thirdterm}
\varepsilon^2\sum_{\x}\varphi(\x)^2\sum_{\y,\vert \y-\x\vert\leq \delta}\cov_n(\x,\y)\,,
\end{equation*}
which, by periodicity, converges, as long as $\delta/\varepsilon\to\infty$, to 
\begin{equation}\label{finalterm}
I(\p)\int_{D}\varphi(z)^2\vert dz\vert^2\,,
\end{equation}
where $I(\p)=\frac{1}{|\mathcal{P}^{0}|%
}\sum_{\mathbf{x}\in \mathcal{P}^{0}}\sum_{\mathbf{y}\in {\mathcal{P}}}%
\mathrm{Cov}(\mathbf{x,y})$. 

By choosing $\delta\to 0$ slowly, for example $\delta=\varepsilon^{1/3}$, the two first contributions vanish and~\eqref{cov} thus converges to~\eqref{finalterm}. 
The process is therefore a Gaussian white noise with intensity $I(\p)$. 
\end{proof}

In the case of singleton patterns, the intensity can be written as 
\begin{equation*}
\frac{1}{|\mathcal{P}^{0}|}\sum_{x\in \mathcal{P}^{0}}\left(K(x,x)-\sum_{y%
\in {\mathcal{P}}}K\left(x,y\right)^{2}\right)\,.
\end{equation*}

\subsection{Interpretation of the intensity}

Suppose the determinantal point field is at the same time a Gibbs random
field, in the sense that the weight of any configuration $T$ is given by $%
\prod_{x\in T}w(x)$, for some $w:\Omega \rightarrow \mathbb{R}^{+}$. In that
case, the noise intensity can be interpreted as a second derivative of a
free energy. Indeed, the partition function is defined by $%
Z=\sum_{T}\prod_{x\in T}w(x)$. In particular, for a fixed pattern $\mathbf{x}%
\subset \Omega $, choose the weight function so that it gives weight $w_{0}w$
on $\mathbf{x}$, and $w$ on all $\mathbf{y}\in \mathcal{P},\mathbf{y}\neq 
\mathbf{x}$. Let $Z\left( w,w_{0}\right) $ be the corresponding partition
function. A short computation shows that $\sum_{\mathbf{y}\in {\mathcal{P}}}%
\mathrm{Cov}(\mathbf{x,y})\,=-\frac{\partial ^{2}}{\partial w\partial w_{0}}%
\log Z\left( w,w_{0}\right) $, which can be interpreted as an electric
susceptibility of the network.

\subsection{Pattern fields of the spanning tree model}

\label{patternfieldsust} Let $D\subset \mathbb{R}^{d}$, $d\geq 2$ be a
bounded simply connected domain containing $0$. Let $\Upsilon$ be an
infinite weighted graph embedded in ${\mathbb{R}}^d$ invariant under a rank $%
d$ lattice $\Lambda\cong {\mathbb{Z}}^d$. We define $\Omega _{n}$ to be the
edge-set of ${\mathcal{G}}_n=\Upsilon/n\cap D$.

The spanning tree model on ${\mathcal{G}}_{n}$ is a symmetric determinantal
process on $\Omega_n$ with kernel given by the transfer current (another
kernel, which is symmetric itself, is the matrix $K$ defined on page~\pageref%
{symkern} in the introduction).

In this context, a pattern $\mathcal{P}^0$ is a finite set of edges $\left\{
e_{1},...,e_{l};e_{l+1},...,e_{k}\right\} $ in a fundamental domain of $%
\Upsilon$, where the edges $\left\{ e_{1},...,e_{l}\right\} $ are present
and $\left\{ e_{l+1},...,e_{k}\right\} $ are forbidden. Suppose that $%
\mathcal{P}^0$ lies inside a fundamental domain $\Upsilon/\Lambda$, and let $%
{\mathcal{P}}_{n}=\Lambda \mathcal{P}^0$ denote the union of its translates
that lie inside $\Omega_n$. We define the corresponding pattern field $\xi
_{n}=\sum_{\mathbf{x}\in {\mathcal{P}}_{n}}\delta _{\mathbf{x}}=\sum_{x\in
\Lambda}\delta _{\mathcal{P}_{x}^0}$, where $\mathcal{P}_{x}^0$ denotes the
translate of pattern $\mathcal{P}^0$ by $x$. 

The mean of the pattern field on a finite set may sometimes be computed:
when $d=2$ and $\Upsilon$ is isoradially embedded, the density of edges has
the following limit. The probability of an edge on an isoradial graph is $\P %
(e)=(2/\pi )\theta _{e}$, where $\theta_e$ is the half-angle of that edge by~%
\cite{Ken02} (see Appendix~\ref{iso}). Hence, the expected number of edges
in a finite set $A$ is 
\begin{equation*}
\mathbb{E}(\xi (A))=\frac{2}{\pi }\sum_{e\in A}\theta _{e}\,.
\end{equation*}

\begin{theorem}\label{USTpattern}
Under the assumptions of Theorem~\ref{ticv}, each rescaled pattern density field $n^{-d/2}(\xi _{n}-\mathbb{E}(\xi _{n}))$ converges weakly in distribution to Gaussian white noise. The sum of two pattern fields also converges to Gaussian white
noise.
\end{theorem}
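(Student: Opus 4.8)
The plan is to recognize $\xi_n$ as a periodic pattern field of a symmetric determinantal process and to apply Theorem~\ref{patterncv} directly, so that the whole argument reduces to checking its hypotheses for the spanning tree. Recall from Section~\ref{patternfieldsust} that the spanning tree on $\mathcal{G}_n=\Upsilon/n\cap D$ is a symmetric determinantal process on the edge-set $\Omega_n$ whose kernel is the transfer current $T$ (equivalently the symmetric kernel $K$ of page~\pageref{symkern}), and that the periodicity required by Theorem~\ref{patterncv} holds because $\Upsilon$ is $\Lambda$-invariant and $\mathcal{P}_n=\Lambda\mathcal{P}^0$ is generated by a single pattern $\mathcal{P}^0$ in a fundamental domain. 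It therefore remains only to verify the two conditions imported from Theorem~\ref{cltdet}.

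First I would check the kernel decay $K(x,y)=O(|x-y|^{-d})$. This is precisely the content of Theorem~\ref{ticv}: the differential $dg_D$ of Green's function decays like $|z-w|^{-d}$ as $|z-w|\to\infty$ (since $g$ behaves like $|z-w|^{2-d}$ for $d\ge 3$ and like $\log|z-w|$ for $d=2$, so a double derivative is of order $|z-w|^{-d}$), whence the rescaled transfer current between two edges at macroscopic separation satisfies $T(e_n,e_n')=O(n^{-d}|z-w|^{-d})$ together with the pointwise convergence used in the proof of Theorem~\ref{cltdet}. Second, I would establish the uniform lower bound on pattern probabilities. A pattern $\mathcal{P}^0$ prescribes finitely many present and forbidden edges, so $\mathbb{P}_n(\mathbf{x})$ is a fixed determinant in the finitely many entries of $T$ on the support of $\mathbf{x}$; by translation invariance of $\Upsilon$ and convergence of the finite-volume transfer current to its infinite-volume value, these entries converge, uniformly over translates at macroscopic distance from $\partial D$, so that $\mathbb{P}_n(\mathbf{x})$ tends to the positive infinite-volume pattern probability and is bounded below uniformly.

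With both conditions in hand, Theorem~\ref{patterncv} applies and gives that $n^{-d/2}(\xi_n-\mathbb{E}(\xi_n))$ converges weakly to Gaussian white noise with intensity $I(\mathcal{P}^0)$ as in~\eqref{intensity}. For the last assertion, given two patterns I would pass, if needed, to a finite-index sublattice of $\Lambda$ and enlarge the fundamental domain so that disjoint translates of both patterns fit inside it, then set $\mathcal{P}^0=\mathcal{P}^{0,(1)}\sqcup\mathcal{P}^{0,(2)}$. The sum of the two fields is then exactly the pattern field generated by this combined $\mathcal{P}^0$, which again satisfies the hypotheses above, so Theorem~\ref{patterncv} yields convergence to Gaussian white noise; the cross-terms between the two pattern types enter $I(\mathcal{P}^0)$ and remain delta-correlated in the limit because the transfer current between disjoint supports decays like $\mathrm{dist}(\mathbf{x},\mathbf{y})^{-d}$.

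The main obstacle is the uniform lower bound on pattern probabilities. The transfer current decays at the borderline rate $|x-y|^{-d}$, which is exactly the critical summability threshold exploited in Proposition~\ref{blockclt} (where $\sum_y\|K(x,y)\|^s<\infty$ for every $s>1$ but fails at $s=1$); the lower bound is what guarantees that the normalized blocks $B_{z_iz_j}^n=\mathbb{P}^n(z_i)^{-1}A_{z_iz_j}^n$ inherit the same decay, hence that the length-$m\ge 3$ cycle contributions vanish. Ensuring the bound holds uniformly, in particular for patterns whose translates approach $\partial D$, is the only delicate point, and it follows from the strict positivity of infinite-volume pattern probabilities together with the transfer-current convergence supplied by Theorem~\ref{ticv}.
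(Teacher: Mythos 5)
Your proposal follows essentially the same route as the paper: recognize the spanning tree as a (symmetric) determinantal process on $\Omega_n$, invoke Theorem~\ref{patterncv}, and obtain the required kernel decay from Theorem~\ref{ticv}. The paper's own proof is just this reduction stated in two lines (with the sum of two fields dismissed as ``a similar calculation''); your additional verifications --- the uniform lower bound on pattern probabilities and the merged-pattern treatment of the sum of two fields --- are details the paper leaves implicit rather than a different argument.
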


\begin{proof}
The spanning tree is a determinantal process, hence the result follows from Theorem~\ref{patterncv} provided we have $T\left( x,y\right) \leq O\left( n^{-2d}\left\vert x-y\right\vert
^{-2d}\right) $, which follows from Theorem~\ref{ticv}. 
The correlation between different patterns follows from a similar calculation, and we omit it here.
\end{proof}
The intensity of the white noise is given by~\eqref{intensity}. When we
study correlation among fields (that is, the cross-term in the covariance
structure of the sum of two pattern fields), say between two fields
corresponding to patterns $\mathcal{P}^0$ and $\mathcal{P}^1$, which
generate two collections $\mathcal{P}_n^0$ and $\mathcal{P}_n^1$, the
intensity (which may be negative) is given (computation omitted) by 
\begin{equation*}
I(\mathcal{P}^0,\mathcal{P}^1)=\lim_{n\to\infty}\frac{1}{\sqrt{|\mathcal{P}%
_n^0||\mathcal{P}_n^1|}}\sum_{\mathbf{x}\in \Lambda\mathcal{P}^0_n,\mathbf{y}%
\in\Lambda\mathcal{P}^1_n}\mathrm{Cov}_n(x,y)\,.
\end{equation*}

Let us give the example of an infinite $d$-regular graph: at each vertex,
edges are numbered $1,\ldots,d$. We consider the pattern field $\xi^k$
generated by all edges of type $k$. The intensities of the joint fields $%
\langle\xi^i,\xi^j\rangle$ are denoted $I(i,j)$. %
%
Since the total number of points in $\Omega_n$ is constant ($K_n$ is a
projector), for any $x\in\Omega_n$, we have $0=\mathrm{Cov}%
_n\left(x,\sum_{y\in\Omega_n}y\right)=\sum_{y\in\Omega_n}\mathrm{Cov}_n(x,y)$%
. Hence, we obtain 
\begin{equation}  \label{sumzero}
\sum_{i,j=1}^d I(i,j)=0\,.
\end{equation}
A similar relation is true for the liquid dimers pattern fields in two
dimension, see the remark following Theorem~7 in~\cite{Bo}.

When ${\mathcal{P}}_{n}=\Omega _{n}$ is the collection of all edges, and $%
B\subset D$, the non-rescaled covariance of the variables~$\xi _{n}(B)$
(which represents the number of edges inside $B$) has a limit. (A similar
statement is true for other patterns.)

\begin{proposition}
Under the assumptions of Theorem~\ref{ticv} and given two disjoint subregions $B_{1}$ and $B_{2}$ of $D$, the
correlation between the number of edges $\xi _{n}(B_{1})$ and $%
\xi _{n}(B_{2})$ is 
\begin{equation*}
\mathrm{Cov}\left( \xi _{n}(B_{1}),\xi _{n}(B_{2})\right)
=-\int_{B_{1}\times B_{2}}\left\vert \frac{\partial ^{2}}{\partial z\partial
w}g_{D}(z,w)\right\vert ^{2}|dz|^{2}|dw|^{2}\left( 1+o(1)\right) \,.
\end{equation*}
\end{proposition}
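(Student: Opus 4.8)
The plan is to compute the covariance of the edge-counting variables using the determinantal structure of the spanning tree, and then pass to the scaling limit via Theorem~\ref{ticv}. Since the spanning tree model is a symmetric determinantal process with kernel $T$, and since $\xi_n(B_1)$ and $\xi_n(B_2)$ count points (edges) in disjoint regions, the covariance admits a closed form. Specifically, for two disjoint finite edge-sets, the covariance of the counting variables is
\begin{equation*}
\mathrm{Cov}\left(\xi_n(B_1),\xi_n(B_2)\right)=-\sum_{e\in B_1}\sum_{e'\in B_2}K(e,e')^2=-\sum_{e\in B_1}\sum_{e'\in B_2}T(e,e')\,T(e',e)\,,
\end{equation*}
where I use the formula for the covariance of singleton patterns derived earlier in Section~\ref{dpp} (for $x\ne y$, $\mathrm{Cov}(x,y)=-K(x,y)^2$), together with the symmetry of the kernel $K$ and the reciprocity relation $K(e,e')^2=T(e,e')T(e',e)$ noted in the introduction. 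The first step is thus to write down this discrete double sum over edges in $B_1$ and $B_2$.

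Next I would insert the asymptotics of Theorem~\ref{ticv}. Applying~\eqref{main} to each factor, $T(e_n,e_n')=c(e_n')n^{-d}\,dg_D|_{(z,w)}(e,e')+o(n^{-d}|z-w|^{-d})$, and likewise for $T(e_n',e_n)$, the leading term of each summand becomes a product of two first-order derivatives of Green's function along the edge directions, weighted by the local conductances. Because $B_1$ and $B_2$ are disjoint, the points stay at macroscopic distance and the error terms remain genuinely lower order throughout the sum. The discrete double sum $n^{-d}\sum_{e\in B_1}(\cdots)\,n^{-d}\sum_{e'\in B_2}(\cdots)$ is exactly a Riemann sum: each factor of $n^{-d}$ together with the sum over edges in a fixed macroscopic region converges to a volume integral $\int_{B_i}\cdots|dz|^2$ once one averages the edge-direction derivatives over the fundamental domain. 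Carrying out this averaging turns the directional derivatives $dg_D(e,e')$ into the full gradient pairing, and in dimension $2$ (with the isotropy built into the good approximation, cf.\ Remark~\ref{suffcond}) the averaged product collapses to $|\partial^2 g_D/\partial z\,\partial w|^2$. Thus the double sum converges to $-\int_{B_1\times B_2}|\partial_z\partial_w g_D(z,w)|^2\,|dz|^2|dw|^2$, with a multiplicative $(1+o(1))$ absorbing the error terms.

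The main obstacle is the passage from the discrete directional-derivative product to the clean continuum integrand $|\partial^2_{zw}g_D|^2$: one must verify that summing $c(e')\,dg_D(e,e')\cdot c(e)\,dg_D(e',e)$ over the edges in a fundamental cell, against the local geometry of the embedded graph, yields precisely the squared modulus of the mixed second derivative and not some anisotropic bilinear form depending on the lattice. This is where the good-approximation hypotheses (isotropy of the random walk, the approximate mean value property) and the conformal/rotational structure in $d=2$ are essential; it is also the point where the factors of $c(e)$ and the edge lengths $|e|$ from~\eqref{main} must cancel correctly against the Riemann-sum normalization. Once this averaging identity is established, the remaining steps—uniform control of the $o(\cdot)$ terms over the compact regions $B_1,B_2$ and convergence of the Riemann sums—are routine given that the regions are disjoint and interior.
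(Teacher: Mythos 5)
Your proposal is correct and follows essentially the same route as the paper, whose own proof is just the one-line remark that the statement follows from Theorem~\ref{ticv}: you supply precisely the intended computation, namely the determinantal covariance identity $\mathrm{Cov}(\xi_n(B_1),\xi_n(B_2))=-\sum_{e\in B_1}\sum_{e'\in B_2}T(e,e')T(e',e)$ for disjoint edge sets, followed by the substitution of the transfer-current asymptotics and a Riemann-sum limit. The local averaging step you flag (collapsing the directional derivatives and conductances over a fundamental cell to $\vert\partial_z\partial_w g_D\vert^2$) is indeed the only nontrivial point, and your appeal to the isotropy built into the good-approximation hypotheses is the right way to handle it.
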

\begin{proof}
This follows from Theorem~\ref{ticv}.
\end{proof}

\subsection{Minimal-pattern fields of the abelian sandpile}

Minimal subconfigurations of the sandpile correspond to local events for the
spanning tree model. However, they cannot always be written as simple
patterns: the probability that a vertex has height zero is equal to a
weighted sum of the probability of (non-disjoint) patterns consisting of a
single edge present (and all other missing). We therefore need to deal with
the mixture of measures and an argument is provided in the following
proposition.

\begin{proposition}\label{mix}
Let $\xi _{n}$ be a random element in $\ell ^{2}(\Omega _{n})$ that
satisfies a central limit theorem (as a sequence in $n$), $\eta _{n}$ be a random measure on finite
subsets of $\Omega _{n}$. For any $\mathbf{x}_{1},...,\mathbf{x}_{n}\subset
\Omega _{n}$, $\eta _{n}\left( \mathbf{x}_{1},...,\mathbf{x}_{n}\right)
=\sum c_{i_{1},...,i_{n}}^{n}\xi _{n}(x_{1}^{\left( i_{1}\right)
},...,x_{n}^{\left( i_{n}\right) })$, where $\{c_{i_{1},...,i_{n}}^{n}\}$
are uniformly bounded, and the sum runs over all the elements of $\mathbf{x}%
_{1},...,\mathbf{x}_{n}$. Then $\eta _{n}$ satisfies a central limit theorem
with the same speed.
\end{proposition}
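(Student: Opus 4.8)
The plan is to prove the central limit theorem for $\eta_n$ by the method of cumulants, transferring to $\eta_n$ — through the linear relation that defines it — the cumulant estimates already available for $\xi_n$. Recall that, in the language of Proposition~\ref{blockclt} and Theorem~\ref{cltdet}, the assertion ``$\xi_n$ satisfies a CLT with speed $a_n$'' (here $a_n=n^{d/2}$ in the applications) is equivalent to Wick's formula~\eqref{wick} holding in the limit, that is, to the vanishing of the normalized cumulants of order $\ge 3$ of $\xi_n(\varphi)$ together with a bounded, non-degenerate normalized variance. By polarization this gives, for every $m$ and all points $y_1,\dots,y_m\in\Omega_n$, that the normalized joint cumulant $a_n^{-m}\,\mathrm{cum}\bigl(\xi_n(y_1),\dots,\xi_n(y_m)\bigr)$ tends to $0$ for $m\ge 3$; this is the only input on $\xi_n$ I would use.

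First I would use the defining relation, which expresses every evaluation of $\eta_n$ as a finite linear combination, with uniformly bounded coefficients $c^n$, of evaluations of $\xi_n$ at points drawn from the (bounded-size) supports of the patterns; in particular $\eta_n(\varphi)=\sum_{\mathbf{x}}\varphi(\mathbf{x})\,\eta_n(\mathbf{x})$ is such a bounded linear combination of $\xi_n$-values. Because cumulants are multilinear in their arguments, the $k$-th cumulant $\mathrm{cum}_k\bigl(\eta_n(\varphi)\bigr)$ then expands as a finite linear combination — its coefficients being products of the $c^n$ and of values of $\varphi$ — of $k$-th order joint cumulants of $\xi_n$ at points in the pattern supports. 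Two structural facts make this expansion harmless: the coefficients $c^n$ are uniformly bounded in $n$, and the number of summands is bounded uniformly in $n$, because each pattern has support of bounded size (finitely many per fundamental domain).

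Next I would normalize and pass to the limit. For $k\ge 3$, dividing by $a_n^k$ and using the first paragraph, every term $a_n^{-k}\,\mathrm{cum}_k(\xi_n\ \text{joint})$ tends to $0$; since there are boundedly many such terms with uniformly bounded coefficients, $a_n^{-k}\,\mathrm{cum}_k\bigl(\eta_n(\varphi)\bigr)\to 0$ as well — note that possible oscillation of the $c^n$ in $n$ is harmless here, as we only need the sum to vanish, not to converge to a prescribed value. Taking $k=2$ in the same expansion shows that $a_n^{-2}\var\bigl(\eta_n(\varphi)\bigr)$ stays bounded. Normalizing $\eta_n(\varphi)$ by its own standard deviation, the cumulant criterion (see~\cite{Janson}) then yields convergence in law to a standard Gaussian for every test function $\varphi$, that is, $\eta_n$ satisfies a CLT with the same speed $a_n$.

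The hard part will not be any analytic estimate but the bookkeeping of the normalization, and in particular the non-degeneracy of the variance: one must rule out that forming the mixture collapses the scaling, so that $a_n^{-2}\var\bigl(\eta_n(\varphi)\bigr)$ remains bounded \emph{below} and the self-normalized higher cumulants genuinely vanish. This is exactly where the hypotheses enter — uniform boundedness of the $c^n$ and locality of the patterns keep both the number of terms and their coefficients $O(1)$, so every cumulant estimate for $\eta_n$ is of the same order as the corresponding one for $\xi_n$; in the periodic applications the normalized variance in fact converges (as in Theorem~\ref{patterncv}), pinning down the limiting intensity. With this control the same-speed conclusion is immediate, and, as in Theorem~\ref{cltdet}, the argument does not feel the shape of the boundary of $D$.
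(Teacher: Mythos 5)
Your overall strategy --- transferring moment/cumulant estimates from $\xi_n$ to $\eta_n$ by multilinearity, using that each evaluation of $\eta_n$ is a linear combination of evaluations of $\xi_n$ with uniformly bounded coefficients --- is the same as the paper's (the paper phrases it via Wick's formula for moments rather than cumulants, which is equivalent). But there is a genuine gap where you normalize and pass to the limit. You assert that $\mathrm{cum}_k(\eta_n(\varphi))$ expands into a number of summands that is \emph{bounded uniformly in $n$}, and that the only input needed on $\xi_n$ is the pointwise vanishing of the normalized joint cumulants $a_n^{-m}\,\mathrm{cum}(\xi_n(y_1),\dots,\xi_n(y_m))$ at fixed tuples of points. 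Neither holds. The test function is paired against \emph{all} patterns in $\mathcal{P}_n$, so $\mathrm{cum}_k(\eta_n(\varphi))$ is a sum over $O(n^{kd})$ tuples of patterns, each tuple contributing boundedly many joint cumulants of $\xi_n$; only the inner expansion \emph{per tuple} has boundedly many terms (this is what the bounded support size buys you). After dividing by $a_n^k=n^{kd/2}$, a pointwise $o(a_n^k)$ (or even $O(1)$) bound on each individual joint cumulant gives at best $O(n^{kd/2})$ for the whole sum, which diverges. The sum over tuples is small only because of the off-diagonal \emph{decay} of the truncated correlations --- the absolute summability established inside the proof of Proposition~\ref{blockclt} (the cycle bound $O(n^{2-m})$ for cycles of length $m\ge 3$). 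That estimate is a bound on a sum of absolute values, hence stable under reweighting by the bounded coefficients $c^n_{i_1,\dots,i_k}$ and by $\varphi$; the bare statement ``$\xi_n$ satisfies a CLT,'' or its polarized pointwise consequence, is not a usable input here.

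The repair is to replace your ``only input'' by the quantitative output of the proof of Proposition~\ref{blockclt}/Theorem~\ref{cltdet}: for each $m\ge 3$, the sum over distinct $m$-tuples of the absolute values of the $m$-point truncated correlations of $\xi_n$, weighted by arbitrary uniformly bounded coefficients, is $o(a_n^m)$. With that input your bookkeeping closes and coincides with the paper's argument, which verifies $\Xi_k^n(\varphi)-(k-1)!!\,(\Xi_2^n(\varphi))^{k/2}=o(1)$ by exactly this reduction of the $m$-point marginals of $\eta_n$ to those of $\xi_n$. As for the variance lower bound you flag as ``the hard part'': the paper does not address it either --- it normalizes by $\var(\xi_n(\varphi))^{1/2}$ throughout rather than by the standard deviation of $\eta_n(\varphi)$, thereby allowing a possibly degenerate Gaussian limit, and non-degeneracy is only settled in the periodic applications where the intensity is computed.
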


\begin{proof}
The result follows by directly verifying Wick's formula. Consider the $k^{th}$
moment of the rescaled field $\left( \eta _{n}-\mathbb{E}\eta _{n}\right) /%
\mathrm{Var}\left( \xi _{n}\right) ^{-1/2}$ applied to a test function $\varphi$:
\begin{equation*}
\Xi _{k}^{n}\left( \varphi \right) =\mathrm{Var}\left( \xi _{n}\left(
\varphi \right) \right) ^{-k/2}\sum_{z_{1},...,z_{k}}\varphi \left(
z_{1}\right) ...\varphi \left( z_{k}\right) \mathbb{E}\left(
\prod\limits_{i=1}^{k}\left( \eta _{n}\left( 1_{z_{i}}\right) -\mathbb{E}%
\eta _{n}\left( 1_{z_{i}}\right) \right) \right) .
\end{equation*}%
It suffices to check 
\begin{equation*}
\lim_{n\rightarrow \infty }\Xi _{k}^{n}\left( \varphi \right) =\left\{ 
\begin{array}{cc}
\left( k-1\right) !!\left( \lim_{n\rightarrow \infty }\Xi _{2}^{n}\left(
\varphi \right) \right) ^{k/2} & k\text{ even,} \\ 
0 & k\text{ odd.}%
\end{array}%
\right.
\end{equation*}%
This is easily verified using the corresponding result for $\xi _{n}$. For
instance, when $k$ is even, $\Xi _{k}^{n}\left( \varphi \right) -\left(
k-1\right) !!\left( \Xi _{2}^{n}\left( \varphi \right) \right) ^{k/2}$ only
contains contributions from the sum of $m-$point marginal ($m\geq 3$) of $%
\eta _{n}.$ Each of these marginals can be expressed as a finite linear
combination of $m-$point marginals of $\xi _{n}$. The central limit theorem
for $\xi _{n}$ implies that the corresponding sum of $m-$point marginals of $\xi
_{n}$ is $o\left( 1\right) $. Therefore $\left\vert \Xi _{k}^{n}\left(
\varphi \right) -\left( k-1\right) !!\left( \Xi _{2}^{n}\left( \varphi
\right) \right) ^{k/2}\right\vert= o\left( 1\right)$.
\end{proof}

We now study the abelian sandpile model on the following class of graphs.
Let $\Upsilon $ be an infinite weighted graph embedded in ${\mathbb{R}}^{d}$%
, and invariant under a rank $d$ lattice $\Lambda \cong {\mathbb{Z}}^{d}$.
This graph is in particular amenable and satisfies the one-end property~\cite%
{LMS}. The infinite volume limit on sandpiles is thus well-defined~\cite%
{AJ,JW}.

Let $D\subset {\mathbb{R}}^d$ be a domain. Without loss of generality, we
may suppose that $0$ lies inside the domain $D$ and we define $\Omega _{n}=D
\cap\Upsilon/n$. For a fixed minimal subconfiguration $\mathcal{P}^0$ (that
we call minimal-pattern) lying inside the fundamental domain $%
\Upsilon/\Lambda$, let ${\mathcal{P}}_{n}=\Lambda \mathcal{P}^0\cap \Omega_n$
be the collection of its translates lying inside $\Omega_n$. Let $\xi_{n}$
be the associated random field in~$\Omega_{n}$.

\begin{theorem}\label{ASMpattern}
Under the assumptions of Theorem~\ref{ticv}, and for any minimal-pattern, the corresponding field $n^{-d/2}(\xi_n-\E(\xi_n))$ converges weakly in distribution to Gaussian
white noise.
\end{theorem}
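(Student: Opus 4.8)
The plan is to reduce the statement to the spanning tree pattern field result (Theorem~\ref{USTpattern}) through the burning bijection, handling the fact that a minimal subconfiguration is not a single spanning tree pattern but a mixture of such, via Proposition~\ref{mix}. Under the burning bijection, the event that the sandpile restricted to a window $W$ (a translate of the fundamental-domain minimal-pattern $\mathcal{P}^0$) equals the given minimal subconfiguration is a local event for the spanning tree: by Proposition~\ref{minimaldet} its probability is $\det(I-T)_{\mathcal{E}_0}$, the probability that the spanning tree avoids the finite edge set $\mathcal{E}_0$ read off from the image tree on $\G_W$, and more generally it is expressed there as a weighted average, over the finitely many spanning trees of $\G_W$, of spanning tree pattern probabilities $\det(I-T)_{\mathcal{E}}$.

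First I would record the decomposition of the sandpile minimal-pattern field $\xi_n$ (which plays the role of $\eta_n$ in Proposition~\ref{mix}) as a finite linear combination, with uniformly bounded coefficients, of marginals of the spanning tree pattern field. The coefficients are the weight ratios $\mathrm{weight}(\mathcal{T})/\sum_{\mathcal{T}'}\mathrm{weight}(\mathcal{T}')$ appearing in Proposition~\ref{minimaldet}; since $W$ lies in a single fundamental domain, the graph $\G_W$ has a bounded number of vertices and edges uniformly in $n$, so both the number of trees and the window size stay fixed, making these coefficients uniformly bounded. Next, the spanning tree pattern field satisfies a central limit theorem at speed $n^{-d/2}$ by Theorem~\ref{USTpattern}: its hypotheses hold because $T(x,y)=O(n^{-d}|x-y|^{-d})$ by Theorem~\ref{ticv} and because minimal-pattern probabilities are bounded below, so the variance grows like $|\mathcal{P}_n|=O(n^d)$. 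Proposition~\ref{mix} then transfers this central limit theorem to $\xi_n$ at the same speed, giving a Gaussian limit. To identify the limit as \emph{white noise} I would note that $\cov(\xi_n)$ is a bounded linear combination of spanning tree pattern (cross-)covariances, each decaying like $O(n^{-2d}\mathrm{dist}(\mathbf{x},\mathbf{y})^{-2d})$; running the three-term splitting of the proof of Theorem~\ref{patterncv} then shows the off-diagonal part vanishes and the two-point function converges to a positive multiple of $\delta(z-w)$, with intensity given by the periodized covariance sum~\eqref{intensity}.

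The main obstacle is exactly the mixture flagged before the statement: a minimal subconfiguration (e.g.\ a zero height) is a weighted superposition of non-disjoint spanning tree patterns, so the block-determinantal machinery of Theorem~\ref{cltdet} does not apply directly, and this is why Proposition~\ref{mix} is needed. The essential technical check there is that all higher ($m\ge3$) mixed marginals of $\xi_n$ are finite linear combinations, with the bounded coefficients above, of higher mixed marginals of the spanning tree field, so they inherit the $o(1)$ bound that kills every non-pairing Wick contribution; keeping the number of terms finite as $n\to\infty$ is guaranteed by the fixed window size. A secondary point is the uniform lower bound on the minimal-pattern probabilities $\mathbb{P}_n(\mathbf{x})$, needed to justify the $n^{-d/2}$ normalization; this holds because being a minimal subconfiguration is a local event of probability bounded away from $0$ in the limit.
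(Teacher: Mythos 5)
Your proposal is correct and follows essentially the same route as the paper: reduce to spanning tree patterns via the burning bijection and Proposition~\ref{minimaldet}, transfer the central limit theorem through the mixture using Proposition~\ref{mix} together with Theorem~\ref{USTpattern}, and identify the white-noise covariance from the $O\left(n^{-2d}\mathrm{dist}(\mathbf{x},\mathbf{y})^{-2d}\right)$ decay as in Theorem~\ref{patterncv}. The only difference is cosmetic: the paper first notes that for unweighted regular lattices the minimal-pattern is a single tree pattern (no mixture needed), while you go directly to the general weighted case.
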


\begin{proof}
When $\Upsilon $ is a regular lattice with uniform weights, each spanning tree\ $\mathcal{T}$
described in Proposition~\ref{minimaldet} carries the same weight. Therefore we can choose a pattern $\p^1$ for the spanning tree model (on the edges of the graph) such that the minimal-pattern is in bijection with the $\p^1$ event under the burning bijection. The field can therefore be seen as a pattern field of the spanning tree model (associated to $\p=\Lambda\p^1$) (note that the union of two adjacent minimal subconfigurations occurs with zero probability, because of Dhar's burning test). The result is then a corollary of Theorem~\ref{USTpattern}.

Let us deal with the general case. We start by showing that the limit exists and is Gaussian. 
Recall from Proposition~\ref{minimaldet} that the probability of a minimal subconfiguration is a linear combination of pattern probabilities for the spanning tree model.
By Proposition~\ref{mix}, it suffices to prove the result for a
fixed tree pattern~$\mathcal{E}$. This follows from Theorem~\ref{USTpattern}.

Let us now identify the covariance structure of the limiting field. As above, it follows from Proposition~\ref{minimaldet} and Theorem~\ref{ticv} that $\cov_n(\x,\mathbf{y})=O\left( n^{-2d}\left\vert x-y\right\vert
^{-2d}\right)$. Hence for any $\x$, $\sum_{\mathbf{y}}\cov_n(\x,\mathbf{y})$ is summable and therefore (as above in the proof of Theorem~\ref{patterncv}) the limiting field is white noise.
\end{proof}

The intensity of the field is computed as follows. The probability of having
two adjacent minimal subconfigurations is zero. The intensity of the white
noise which is of the form~\eqref{intensity} does not include adjacent
patterns. Therefore, each covariance can be replaced (using Proposition~\ref%
{minimaldet}) by a linear combination of expressions involving the transfer
current. It may therefore be evaluated using the explicit formula for the
transfer current when this latter is known.

In the special case of the zero height field, one has $\xi_{n}=\sum_{x\in
\Omega _{n}}\delta _{h_{x}=0}$, where $h_x$ is the height of sand at~$x$.
The asymptotic density of zero height is known in certain cases. Let $%
A\subset \Omega _{n}$ be some set of vertices. We have $\mathbb{E}(\xi
_{n}(A))=|A|/|A/\Lambda |\sum_{x\in A/\Lambda }\P (h_{x}=0)=q|A|/k$, where $%
q=\sum_{x\in A/\Lambda }\P (h_{x}=0)$ and $k=|A/\Lambda |$. In the case of ${%
\mathbb{Z}}^{2}$, the fundamental domain has size $k=1$ and $q=\P %
(h_{0}=0)=2/\pi ^{2}(1-2/\pi )$ by using the explicit expression of the
transfer current (see~\cite{MD} for the computation).

\section{Questions}

\begin{enumerate}
\item We have not dealt with non-minimal subconfigurations in the sandpile:
do the corresponding fields also have Gaussian fluctuations?

\item Can closed-form numerical expressions be obtained for the different
intensities of the pattern fields? Are there algebraic relations between
different intensities, generalizing~\eqref{sumzero}?

\item From the proofs above, one sees that the critical rate of correlation
decay in dimension~$d$ is $r^{-d}$, where~$r$ is the distance. If the
correlation decays faster, the random field fluctuations converge to
Gaussian white noise. Boutillier~\cite{Bo} studied the random field
associated with liquid dimers on planar graphs, which are in the critical
regime, and obtained some long range Gaussian random field in the limit. Are
there any natural critical models in higher dimensions?

\item Does the approximate mean value property hold for supercritical
percolation cluster and random conductance models, thus allowing one to
study the scaling limit of statistical physics models on these random
environments? In the random conductance model on ${\mathbb{Z}}^{d}$~\cite%
{BD10} the Green function is shown to have the same decay $r^{2-d}$ as the
continuous Green function. Provided the approximate mean value property is
valid, one can show that the transfer current has the decay of $r^{-d}$
which implies that the pattern fields considered above (on this random
environment) would have the same fluctuations as in the current paper.

\item What is the distribution of the number of points of a pattern field
for a symmetric determinantal process on a finite space?
\end{enumerate}

\appendix

\section{Isoradial graphs}

\label{iso}

In this appendix, we recall why isoradial graphs are good approximations of
planar domains which satisfy Assumption (A1) and (A2) (on page~\pageref%
{assumption4}) about the convergence of the Green functions on the whole
plane. Our main reference for this section is~\cite{CS}.

\subsection*{Good approximations}

Let us first recall the setting (see e.g.~\cite{Ken02,CS} and references
therein for earlier works on the subject). Let $\Upsilon$ be an infinite
planar isoradial graph and for each $\varepsilon>0$ we denote by $%
\Upsilon^\varepsilon$ a planar isoradial embedding of $\Upsilon$ with mesh
size $\varepsilon>0$ (we may suppose that an isoradial embedding of $\Upsilon
$ with mesh size $1$ is fixed and that other embeddings are obtained by
dilation with respect to a fixed vertex). The planar dual of $\Upsilon$ is
also isoradial with same radius and the \emph{diamond graph} is defined to
be the graph whose faces are the rhombi $R(e)$ for all edges $e$, obtained
by joining the vertices of an edge and its dual. The angle between an edge $e
$ and the side of $R(e)$ at its left is denoted $\theta_e$. It is the
half-angle at the origin of edge $e$ of the rhombus~$R(e)$, see Figure~\ref%
{isoradial}.

\begin{figure}[ht]
\centering
\includegraphics[width=7cm]{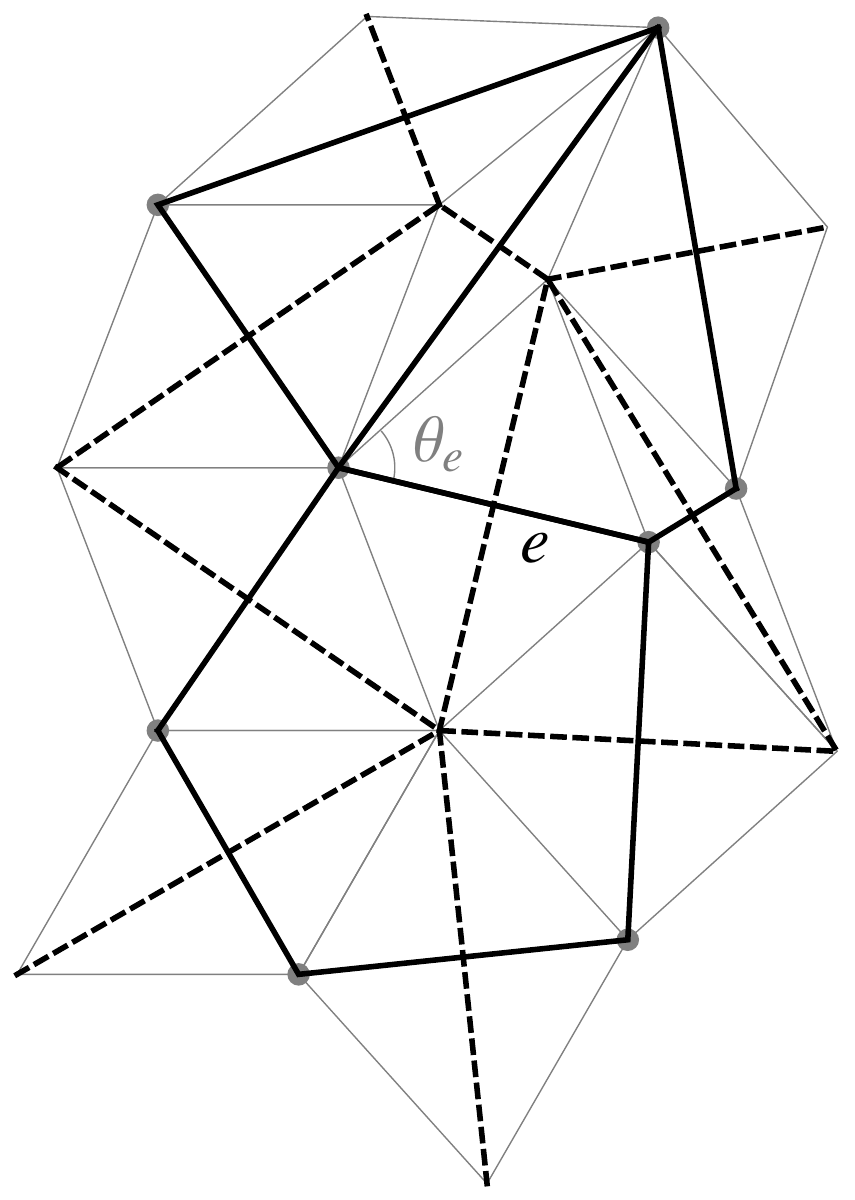}
\caption{A portion of an infinite isoradially embedded graph; The graph is
represented in thick lines; the dashed edges represent the dual graph; the
gray lines form the diamond graph; An edge $e$ of the graph and the
corresponding rhombic half-angle~$\protect\theta_e$ is represented.}
\label{isoradial}
\end{figure}

We make the customary assumption (bounded half-angles property) that all
angles $\theta_e$ of these rhombi are uniformly bounded away from $0$ and $%
\pi/2$. This ensures that for vertices of $\Upsilon^\varepsilon$, the
combinatorial distance in the graph and the Euclidean distance are uniformly
related by a factor of $\varepsilon$. Let $D\subset\mathbb{C}$ be a
simply-connected compact planar domain. For every $\varepsilon>0$, consider $%
D^\varepsilon$ to be a finite isoradial subgraph of~$\Upsilon^\varepsilon$
whose vertices lie in $D\cap\Upsilon^\varepsilon$, which is ``simply
connected'' in the sense that the union of its closed faces is simply
connected. 
A sequence $\left(D^{\varepsilon}\right) _{\varepsilon >0}$ is said to \emph{%
approximate} $D$ if the Hausdorff distance from $D^{\varepsilon}$ to $D$ is $%
O(\varepsilon)$.

In the following, we assume that $D^\varepsilon$ is endowed with its \emph{%
critical isoradial conductances} 
\begin{equation*}
c(e)=\tan\theta_e\,,
\end{equation*}
where $\theta_e$ is the half-angle of the rhombi $R(e)$ (see~\cite{Ken02}
for a longer definition), and consider the corresponding Laplacian~$\Delta$.

It is shown in~\cite{CS} that isoradial graphs satisfy the \emph{Approximate
mean value} property. The \emph{Paths approximation} property follows from
the bounded angle property mentioned above. Thus, isoradial graphs are good
approximations.

Furthemore, the random walk (biased by weights) on the embedded graph is
isotropic. A local time-reparametrization of it converges to Brownian
motion. This implies assumption~(A2).

\subsection*{Green's functions convergence}

It was proved by Kenyon in~\cite{Ken02}, up to an improvement given by B\"{u}%
cking in~\cite{Bu}, that the Green function on $\Upsilon^\varepsilon$
satisfies the following expansion~\footnote{%
On $\mathbb{Z}^2$, an all-order expansion is known~\cite{FU}.}: for any two
vertices $v\neq w$, we have 
\begin{equation*}
G(v,w)=-\frac{1}{2\pi}\log\vert v-w\vert+O\left(\frac{\epsilon^2}{\vert
v-w\vert^2}\right)\,.
\end{equation*}

This implies that $G$ converges to the Green function on the whole plane $g_%
\mathbb{C}(z,w)=-\frac{1}{2\pi}\log|z-w|$, as $\varepsilon\to 0$.

This is assumption (A1). We have thus recalled why isoradial graphs are good
approximations satisfying assumptions (A1) and (A2).

Since the Laplace equation is conformally invariant, for any
simply-connected surface with boundary, $D$, the Neumann and Dirichlet Green
function are obtained by their image under a conformal map between the upper
half plane $\H$ and $D$ of the Green function on $\H$ with corresponding
boundary conditions.

Discrete harmonic functions on isoradial graphs converge to continuous
harmonic functions in a very strong sense described in~\cite{CS}. In
particular, the Dirichlet Green function is shown to converge building on
Kenyon's asymptotics for the whole plane. We give here the proof
for the free boundary Green function. The proof follows from arguments in~%
\cite{CS} although the result is not stated explictly there.

\begin{proposition}\label{surface}
For any graph $\G$ approximating a simply connected surface with boundary $D$ in the sense that $\G$ is embedded in $D$ and there exists a system of coordinate patches and conformal maps that map the graph to an isoradial planar graph on these patches, the discrete Neumann Green function converges uniformly on any compact set inside the surface to its continuous counterpart. 
\end{proposition}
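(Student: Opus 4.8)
The plan is to reduce to the planar isoradial case by conformal coordinates, and then to handle the free boundary condition by passing to the harmonic conjugate on the dual isoradial graph. Since $D$ is simply connected with boundary and is equipped with a system of coordinate patches together with conformal maps to isoradial planar graphs, and since the continuous Neumann Green function on $D$ is the conformal image of the Neumann Green function on $\H$ (as recalled above, harmonic functions pull back to harmonic functions, the zero-normal-derivative condition is preserved by conformal maps, and the logarithmic singularity is a conformal invariant up to a bounded term), it suffices to establish convergence on a single planar patch and then glue the local estimates by a partition-of-unity/overlap argument. On such a patch I would write the discrete Neumann Green function as $G=g_{\C}+h$, where $g_{\C}(z,w)=-\tfrac{1}{2\pi}\log|z-w|$ is the whole-plane Green function, whose discrete counterpart converges by the Kenyon--B\"ucking asymptotics recalled in this appendix (Assumption (A1)). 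The correction $h$ is then discrete harmonic away from the diagonal, carrying a Neumann boundary condition that cancels the normal derivative of $g_{\C}$ on the boundary.

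The key step is to convert this Neumann problem into a Dirichlet problem. Because the patch is simply connected, $h$ admits a single-valued harmonic conjugate $\tilde h$ on the dual graph, characterized by the discrete Cauchy--Riemann relations $d\tilde h(e^{*})=dh(e)$ of Lemma~\ref{harmonicconjugate}. Under this correspondence the zero-normal-derivative condition for $h$ becomes a constant-tangential-increment condition for $\tilde h$, i.e. a Dirichlet (constant boundary value) condition, since normal increments of $h$ equal tangential increments of $\tilde h$. As the dual of an isoradial graph is again isoradial with the same rhombic half-angles, $\tilde h$ is a discrete harmonic function on an isoradial graph with convergent Dirichlet boundary data, and I would invoke the strong convergence theory of~\cite{CS} together with its a priori Harnack and equicontinuity estimates to obtain precompactness and hence uniform convergence of $\tilde h$ to its continuous harmonic conjugate on interior compact subsets.

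It then remains to conjugate back. Using that the discrete holomorphic function $h+i\tilde h$ and the discrete Cauchy--Riemann operator converge to their continuous counterparts on isoradial graphs~\cite{CS}, I would recover uniform convergence of $h$, and therefore of $G=g_{\C}+h$, on interior compacts; the additive constant intrinsic to the Neumann Green function is pinned down by the mean-zero normalization, which is tracked consistently through the conjugation. The main obstacle is precisely this passage through the free boundary condition: unlike the Dirichlet case treated directly in~\cite{CS}, there is no pointwise boundary data to feed into the convergence theorem, only a condition on the discrete normal derivative. The harmonic-conjugate device circumvents this, but one must verify that the conjugate is single-valued (guaranteed by simple connectivity), that the discrete holomorphic machinery of~\cite{CS} is stable under dualization, and that the additive ambiguity is handled so the limit is genuinely the continuous Neumann Green function. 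As an alternative near the boundary, one may instead use the piecewise-linear-boundary approximations constructed earlier and reflect the graph across each flat segment, gluing to obtain an isoradial graph without that boundary, so that (A1) applies directly and yields the same conclusion.
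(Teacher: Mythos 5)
Your proposal follows essentially the same route as the paper: decompose the discrete Neumann Green function into the whole-plane part (handled by the Kenyon--B\"ucking asymptotics) plus a harmonic correction, pass to the single-valued harmonic conjugate on the dual isoradial graph to turn the Neumann data into convergent Dirichlet data, invoke the convergence theory of~\cite{CS}, conjugate back, and reduce general domains to the planar case by conformal maps. The one wording slip --- the correction $h$ does not itself have zero normal derivative (only $G$ does); its conjugate has \emph{determined, convergent} boundary values rather than constant ones --- does not affect the argument, which is the paper's.
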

\begin{proof}
Let us first show it for the unit disk $D$. Let $\Upsilon^\varepsilon$ be an infinite isoradial graph with mesh size $\varepsilon$ such that $\G$ is a subgraph of it. Write $G=G_{\Upsilon^\varepsilon}-H$ where $H$ is harmonic with Neumann boundary conditions equal to the normal derivative of $G_{\Upsilon^\varepsilon}$. By $C^1$ convergence of $G_{\Upsilon^\varepsilon}$ when the mesh size goes to zero, the values of the normal derivative converge. Now consider the dual graph of $\G$. The harmonic conjugate~$H^*$ of $H$ is univalued since $D$ is simply-connected. Its values on the boundary are determined (up to a constant which we take to be $0$) and converge to a limit $f^*$. By~\cite{CS} the harmonic function $H^*$ converges to the harmonic extension of $f^*$. Hence, its dual $H$ converges too, to a function $h$. The limit of~$G$ is therefore $g_\C-h$ which is equal to Neumann Green's function $g_D^r$ (up to a constant).

In the case where $D$ is another domain (even non planar), we use a conformal map to bring it back to the previous case. Since convergence is a local result, we may again use the convergence result of~\cite{CS} and the above argument.
\end{proof}

\subsection*{Comments on previous related work}

Asymptotics of the Green function on isoradial graphs and its derivatives
have been well studied in the literature. Among others, we may state~\cite%
{St,FU} for asymptotics of the Green function on $\mathbb{Z}^2$and \cite%
{Ken02,Bu} for the case of isoradial graphs on the whole plane (stated
above). For the convergence of the increment rate of the rescaled Green
function over $\varepsilon \mathbb{Z}^2$, see Lemma 17 in~\cite{Ken00}.

D\"{u}rre in~\cite{Du} showed in the case of ${\mathbb{Z}}^2$in the
Dirichlet case the analog of our Theorem~\ref{ticv} and also gave the
application to the study of zero-height fields in the abelian sandpile (our
Theorems~\ref{corrASM} and~\ref{ASMpattern} for this particular
minimal-pattern in the case of ${\mathbb{Z}}^2$).

Important results of convergence of discrete harmonic functions and their
derivatives (in the Carath\'eodory topology) in domains with boundary (in
particular the Dirichlet Green function) to their continuous counterpart are
gathered in~\cite{CS}. Theorem~\ref{ticv} directly follows from their
arguments in the case of isoradial graphs by using $C^1$convergence of the
wired Green function in each variable. For the free case it follows from
Proposition~\ref{surface} above.

Carath\'eodory convergence is weaker than Hausdorff convergence, which we
suppose. Therefore, the results of~\cite{CS} applied to the transfer current
give the uniform~$C^1$convergence.

Note that for isoradial graphs there is an explicit formula for the transfer
current in terms of path integral which is a linear combination of the Green
function explicit formula derived in~\cite{Ken02}. We do not use it here
because we are interested only in the scaling limit asymptotics which are
easier to derive.

\addtocontents{toc}{\SkipTocEntry}
\section*{Acknowledgements}

We thank C\'edric Boutillier, Richard Kenyon, and David Wilson
for helpful discussions and feedback. We also thank the anonymous referee for helpful suggestions.



\begin{thebibliography}{99}

\bibitem{AJ} \textsc{S.\ R.\ Athreya, A.\ A.\ J\'arai}, \textit{Infinite
volume limit for the stationary distribution of abelian sandpile models},
Communications in Mathematical Physics, 249 (2004), 197--213. \href{http://www.ams.org/mathscinet-getitem?mr=2077255}%
{MR2077255 (2005m:82106)}

\bibitem{BD10} \textsc{M.\ T.\ Barlow, J.-D.\ Deuschel}, \textit{Invariance
principle for the random conductance model with unbounded conductances},
Ann. Probab. 38 (2010), no. 1, 234--276. \href{http://www.ams.org/mathscinet-getitem?mr=2599199}%
{MR2599199 (2011c:60329)}

\bibitem{BKPS} \textsc{I.\ Benjamini, H.\ Kesten, Y.\ Peres, O.\ Schramm}, 
\textit{Geometry of the uniform spanning forest: transitions in
dimensions 4,8,12,...}, Ann. of Math. (2) 160 (2004), no. 2, 465--491. 
\href{http://www.ams.org/mathscinet-getitem?mr=2123930}{MR2123930
(2005k:60026)}

\bibitem{BLPS} \textsc{I.\ Benjamini, R.\ Lyons, Y.\ Peres, O.\ Schramm}, 
\textit{Uniform spanning forests}, Ann. Prob. 29 (2001), 1--65. \href{http://www.ams.org/mathscinet-getitem?mr=1825141}%
{MR1825141 (2003a:60015)}

\bibitem{Bi} \textsc{N.\ Biggs}, \textit{Algebraic potential theory on graphs%
}, Bull. London Math. Soc. 29 (1997), no. 6, 641--682. \href{http://www.ams.org/mathscinet-getitem?mr=1468054}%
{MR1468054 (98m:05120)}

\bibitem{Bo} \textsc{C.\ Boutillier}, \textit{Pattern densities in
non-frozen dimer models}, Comm. Math. Phys. 271 (1) (2007), 55--91. \href{http://arxiv.org/abs/math/0603324}%
{arXiv:math/0603324}

\bibitem{BP} \textsc{R.\ Burton, R.\ Pemantle}, \textit{Local
characteristics, entropy, and limit theorems for spanning trees and domino
tilings via transfer impedances}, Ann. Probab. 21 (3) (1993), 1329--1371. 
\href{http://www.ams.org/mathscinet-getitem?mr=1235419}{MR1235419 (94m:60019)%
}

\bibitem{Bu} \textsc{U.\ B\"ucking}, \textit{Approximation of conformal
mappings by circle patterns}, Geom. Dedicata 137 (2008), 163--197. \href{http://www.ams.org/mathscinet-getitem?mr=2449151}%
{MR2449151 (2010k:30054)}

\bibitem{CS} \textsc{D.\ Chelkak, S.\ Smirnov}, \textit{Discrete complex
analysis on isoradial graphs}, Adv. in Math. 228 (3) (2011), 1590--1630. 
\href{http://www.ams.org/mathscinet-getitem?mr=2824564}{MR2824564
(2012k:60137)}

\bibitem{Slade} \textsc{E,\ Derbez, G.\ Slade}, \textit{The scaling limit of
lattice trees in high dimensions}, Comm. Math. Phys. 193 (1998), no. 1,
69--104. \href{http://www.ams.org/mathscinet-getitem?mr=1620301}{MR1620301
(99b:60138)}

\bibitem{D} \textsc{D.\ Dhar}, \textit{Self-organized critical state of
sandpile automaton models}, Phys. Rev. Lett. 64 (1990), 1613--1616. \href{http://www.ams.org/mathscinet-getitem?mr=1044086}%
{MR1044086 (90m:82053)}

\bibitem{DM90} \textsc{D.\ Dhar, S.\ N.\ Majumdar}, \textit{Abelian sandpile
model on the Bethe lattice}, J. Phys. A: Math. Gen. 23 (1990) 4333.

\bibitem{DS} \textsc{P.\ G.\ Doyle, J.\ L.\ Snell}, \textit{Random walks and
electric networks}, Carus Mathematical Monographs, 22. Mathematical
Association of America, Washington, DC, 1984. \href{http://www.ams.org/mathscinet-getitem?mr=920811}%
{MR0920811 (89a:94023)}

\bibitem{Du} \textsc{M.\ D\"urre}, \textit{Conformal covariance of the
abelian sandpile height one field}, Stochastic Process. Appl. 119 (2009),
2725--2743. \href{http://www.ams.org/mathscinet-getitem?mr=2554026}{%
MR2554026 (2011c:60311)}

\bibitem{FU} \textsc{Y.\ Fukai, K.\ Uchiyama}, \textit{Potential kernel for
two-dimensional random walk}, Ann. Probab. 24 (4) (1996), 1979--1992. \href{http://www.ams.org/mathscinet-getitem?mr=1415236}%
{MR1415236 (97m:60098)}

\bibitem{Gabr} \textsc{A.\ Gabrielov}, \textit{Abelian avalanches and Tutte
polynomials}, Phys. A, 195(1-2):253--274, 1993. \href{http://www.ams.org/mathscinet-getitem?mr=1215018}%
{MR1215018 (94c:82085)}

\bibitem{HKPV} \textsc{B.\ J.\ Hough, M.\ Krishnapur, Y.\ Peres, B.\ Virag}, 
\textit{Zeros of Gaussian Analytic Functions and Determinantal Point
Processes}, University Lecture Series, 51. American Mathematical Society,
Providence, RI, 2009. \href{http://www.ams.org/mathscinet-getitem?mr=2552864}%
{MR2552864 (2011f:60090)}

\bibitem{Janson} \textsc{S.\ Janson}, \textit{Gaussian Hilbert spaces},
Cambridge Tracts in Mathematics, 129. Cambridge University Press, Cambridge,
1997. \href{http://www.ams.org/mathscinet-getitem?mr=1474726}{MR1474726
(99f:60082)}

\bibitem{JW} \textsc{A.\ A.\ J\'arai, N. Werning}, \textit{Minimal
configurations and sandpile measures}, Journal of Theoretical Probability,
doi:10.1007/s10959-012-0446-z. \href{http://arxiv.org/abs/1110.4523}{%
arXiv:1110.4523}

\bibitem{JLS} \textsc{D.\ Jerison, L.\ Levine, S. Sheffield}, \textit{%
Internal DLA in higher dimensions}, Electronic Journal of Probability (2013)
18(98): 1--14. \href{http://arxiv.org/abs/1012.3453}{arXiv:1012.3453}

\bibitem{K13} \textsc{A.\ Kassel}, \textit{Laplacians on graphs on surfaces
and applications to statistical physics}, PhD thesis, Orsay University, 2013.

\bibitem{KKW} \textsc{A.\ Kassel, R.\ Kenyon, W.\ Wu}, \textit{Random
two-component spanning forests}, to appear in Ann. Inst. Henri Poincar\'e Probab. Stat., \href{http://arxiv.org/abs/1203.4858}{arXiv:1203.4858}.

\bibitem{KW13} \textsc{A.\ Kassel, D.\ B.\ Wilson}, \textit{The looping rate
and sandpile density of planar graphs}, \href{http://arxiv.org/abs/1402.4169}{arXiv:1402.4169}.

\bibitem{Ken00} \textsc{R.\ Kenyon}, \textit{Conformal invariance of domino
tiling}, Ann. Probab. 28 (2) (2000), 759--795. \href{http://www.ams.org/mathscinet-getitem?mr=1782431}%
{MR1782431 (2002e:52022)}

\bibitem{Ken02} \textsc{R.\ Kenyon}, \textit{The Laplacian and Dirac
operators on critical planar graphs}, Inventiones 150 (2002), 409--439. 
\href{http://arxiv.org/abs/math-ph/0202018}{arXiv:math-ph/0202018}

\bibitem{Kenyon} \textsc{R.\ Kenyon}, \textit{Spanning forests and the
vector bundle Laplacian}, Ann. Probab. 39 (2011), no. 5, 1983--2017. \href{http://www.ams.org/mathscinet-getitem?mr=2884879}%
{MR2884879 (2012k:82011)}

\bibitem{KenyonWilson} \textsc{R.\ Kenyon, D.\ B.\ Wilson}, \textit{Spanning
trees of graphs on surfaces and the intensity of loop-erased random walk on $%
{\mathbb{Z}}^2$}, to appear in J. Amer. Math. Soc. \href{http://arxiv.org/abs/1107.3377}{%
arXiv:1107.3377}

\bibitem{Kozma} \textsc{G.\ Kozma}, \textit{The scaling limit of loop-erased
random walk in three dimensions}, Acta Math. 199 (2007), no. 1, 29--152. 
\href{http://www.ams.org/mathscinet-getitem?mr=2350070}{MR2350070
(2009e:60223)}

\bibitem{LL} \textsc{G.\ F.\ Lawler, V.\ Limic}, \textit{Random Walk: A
modern introduction}, Cambridge Studies in Advanced Mathematics 123, 2010. 
\href{http://www.ams.org/mathscinet-getitem?mr=2677157}{MR2677157
(2012a:60132)}

\bibitem{LSW} \textsc{G.\ Lawler, O.\ Schramm, W.\ Werner}, \textit{%
Conformal invariance of planar loop-erased random walks and uniform spanning
trees}, Ann. Probab. 32 (2004), no. 1B, 939--995. \href{http://www.ams.org/mathscinet-getitem?mr=2044671}%
{MR2044671 (2005f:82043)}

\bibitem{LP} \textsc{L.\ Levine, Y.\ Peres}, \textit{The looping constant of 
$\mathbb{Z}^d$}, Random Structures \& Algorithms, Random Struct. Alg.. doi:
10.1002/rsa.20478. \href{http://arxiv.org/abs/1106.2226}{arXiv:1106.2226}

\bibitem{Lo} \textsc{L.\ Lov\'asz}, \textit{Random walks on graphs: a survey}%
, Combinatorics, Paul Erd\"{o}s is eighty, Vol. 2 (Keszthely, 1993),
353--397, Bolyai Soc. Math. Stud., 2, J\'anos Bolyai Math. Soc., Budapest,
1996. \href{http://www.ams.org/mathscinet-getitem?mr=1395866}{MR1395866
(97a:60097)}

\bibitem{LMS} \textsc{R.\ Lyons, B.\ Morris, O.\ Schramm}, \textit{Ends in
uniform spanning forests}, Electron. J. Probab. 13 (2008), no. 58,
1702--1725. \href{http://www.ams.org/mathscinet-getitem?mr=2448128}{%
MR2448128 (2010a:60031)}

\bibitem{MD} \textsc{S.\ Majumdar, D.\ Dhar}, \textit{Height correlations in
the Abelian sandpile model}, J. Phys. A. 24 (1991), 357--362.

\bibitem{MD2} \textsc{S.\ N.\ Majumdar, D.\ Dhar}, \textit{Equivalence
between the Abelian sandpile model and the $q\to 0$ limit of the Potts model}%
, Physica A, 185:129--145, 1992.

\bibitem{Pemantle} \textsc{R.\ Pemantle}, \textit{Choosing a spanning tree
for the integer lattice uniformly}, Ann. Probab. 19 (1991), no. 4,
1559--1574. \href{http://www.ams.org/mathscinet-getitem?mr=1127715}{%
MR1127715 (92g:60014)}

\bibitem{Ruelle} \textsc{P.\ Ruelle}, \textit{Logarithmic conformal
invariance in the Abelian sandpile model}, J. Phys. A: Math. Theor. 46 494014 doi:10.1088/1751-8113/46/49/494014. \href{http://arxiv.org/abs/1303.4310}{arXiv:1303.4310}

\bibitem{So} \textsc{A.\ Soshnikov}, \textit{Gaussian limit for
determinantal random point fields}, Ann. Probab. 30 (1) (2001), 1--17. \href{http://www.ams.org/mathscinet-getitem?mr=1894104}%
{MR1894104 (2003e:60106)}

\bibitem{St} \textsc{A.\ St\"ohr}, \textit{\"Uber einige lineare partielle
Differenzengleichungen mit konstanter Koeffizienten~{III}}, Math. Nachr. 3
(1954), 330--357.

\bibitem{Wilson} \textsc{D.\ B.\ Wilson}, \textit{Local statistics of the
abelian sandpile model}, manuscript, 2013.
\end{thebibliography}
\end{document}